\documentclass[11pt,a4paper,twoside]{amsart}

\setlength{\textwidth}{5.5in}
\setlength{\topmargin}{0in}
\setlength{\oddsidemargin}{.5in}
\setlength{\evensidemargin}{0in}
\textheight = 54.5pc

\usepackage{amssymb}
\usepackage{bm}
\usepackage{graphics}
\usepackage[mathcal]{euscript}
\usepackage{tikz}
%\usepackage{pgflibrarymatrix}
%\usepackage{pgflibraryarrows}
%\usetikzlibrary{matrix,arrows}

\newtheorem{theo}{\bf Theorem}[section]
\newtheorem{prop}[theo]{\bf Proposition}
\newtheorem{lemma}[theo]{\bf Lemma}
\newtheorem{conj}[theo]{\bf Conjecture}
\newtheorem{defi}[theo]{\bf Definition}
\newtheorem{coro}[theo]{\bf Corollary}
\theoremstyle{plain}
\newtheorem{example}{Example}[section]
\theoremstyle{remark}

\newcommand{\N}{{\mathbb N}}

\newcommand{\C}{{\mathbb C}}
\newcommand{\calN}{{\mathcal N}}
\newcommand{\calM}{{\mathcal M}}
\newcommand{\fillings}{{\mathcal T}}

\newcommand{\RAWcolstrictfillings}[1]{#1_{\mbox{\rm\tiny col-str}}}
\newcommand{\RAWcolposfillings}[1]{#1_{\mbox{\rm\tiny col-pos}}}
\newcommand{\RAWflatcolstrictfillings}[1]{\RAWcolstrictfillings{#1}^{\mbox{\rm\tiny flat}}}
\newcommand{\colstrictfillings}{\RAWcolstrictfillings{\fillings}}
\newcommand{\colposfillings}{\RAWcolposfillings{\fillings}}
\newcommand{\flatcolstrictfillings}{{\RAWflatcolstrictfillings{\fillings}}}
\newcommand{\flatcolstrictfillingsrestr}{{\RAWflatcolstrictfillings{{\fillings'}}}}
\newcommand{\staircasecolposfillings}{\colposfillings^{\mbox{\rm\tiny staircase}}}
\newcommand{\enpermfillings}{{\mathcal R}}
\newcommand{\calS}{{\mathcal S}}
\newcommand{\hcalS}{{\hat{\mathcal S}}}
\newcommand{\bcalS}{{\bar{\mathcal S}}}
\newcommand{\bcalN}{{\bar{\mathcal N}}}

\newcommand{\hcalSrestr}{\hcalS'}
\newcommand{\bcalNrestr}{\bcalN'}
\newcommand{\hpi}{{\hat{\pi}}}
\newcommand{\bpi}{{\bar{\pi}}}
\newcommand{\bM}{{\bar{M}}}

\newcommand{\bS}{{\bar{S}}}
\newcommand{\bSsilly}{{{\bS}_{\rm silly}}}
\newcommand{\Ssilly}{{S_{\rm silly}}}
\newcommand{\bN}{{\bar{N}}}

\newcommand{\bSsillyrestr}{{{\bS'}_{\rm silly}}}
\newcommand{\bNrestr}{{\bN'}}

\newcommand{\Ssillyrestr}{{S'_{\rm silly}}}
\newcommand{\Nrestr}{N'}
\newcommand{\Irestr}{I'}

\newcommand{\block}[1]{\ensuremath{\langle #1 \rangle}}

 % the empty word

\newcommand{\bX}{{\mathbf{X}}}
\newcommand{\bt}{{\mathbf{t}}}
\newcommand{\bx}{{\mathbf{x}}}
\newcommand{\by}{{\mathbf{y}}}
\newcommand{\bz}{{\mathbf{z}}}

\newcommand{\bu}{{\mathbf{u}}}
\newcommand{\bv}{{\mathbf{v}}}
\newcommand{\bw}{{\mathbf{w}}}

\newcommand{\bupsilon}{\boldsymbol{\upsilon}}

\newcommand{\monomtwo}[4]{{[#1^{#4};#2]_{#3}}}
\newcommand{\monomthree}[6]{{[#1^{#5}#2^{#6};#3]_{#4}}}
\newcommand{\monomfour}[8]{{[#1^{#6}#2^{#7}#3^{#8};#4]_{#5}}}

\newcommand{\formal}[2]{{#1\langle\!\langle #2\rangle\!\rangle}}
\providecommand{\inner}[2]{\langle #1,#2\rangle}

\newcommand{\barred}{X}
\newcommand{\hatted}{X}
\newcommand{\hattedplus}{X^{+}}
\newcommand{\markedadj}{X}
\newcommand{\Psilly}{P_{\rm silly}}
\newcommand{\Qsilly}{Q_{\rm silly}}
\newcommand{\psilly}{p_{\rm silly}}
\newcommand{\qsilly}{q_{\rm silly}}
\newcommand{\Psillyplus}{{\Psilly^{+}}}
\newcommand{\Qsillyplus}{{\Qsilly^{+}}}
\newcommand{\phisilly}{\phi_{\rm silly}}
\newcommand{\irr}{{\rm irr}}

 % left neighbor nestings
 % left neighbor nestings
\DeclareMathOperator{\rne}{rne} % right neighbor nestings
\DeclareMathOperator{\Rne}{Rne} % right neighbor nestings
\DeclareMathOperator{\lcr}{lcr} % left neighbor crossings
\DeclareMathOperator{\Lcr}{Lcr} % left neighbor crossings
\DeclareMathOperator{\rcr}{rcr} % right neighbor crossings
\DeclareMathOperator{\Rcr}{Rcr} % right neighbor crossings
\DeclareMathOperator{\Rcrproper}{{\Rcr^0}}
\DeclareMathOperator{\rcrproper}{{\rcr^0}}
\DeclareMathOperator{\Lcrproper}{{\Lcr^0}}
\DeclareMathOperator{\lcrproper}{{\lcr^0}}
\DeclareMathOperator{\LRcr}{LRcr}
\DeclareMathOperator{\lrcr}{lrcr}
\DeclareMathOperator{\Radj}{Radj}
\DeclareMathOperator{\Ladj}{Ladj}
\DeclareMathOperator{\radj}{radj}
\DeclareMathOperator{\ladj}{ladj}

\DeclareMathOperator{\Des}{Des}

\DeclareMathOperator{\Asc}{Asc}
\DeclareMathOperator{\Ascsilly}{Asc_{silly}}
\DeclareMathOperator{\adjAsc}{Asc^{short}}
\DeclareMathOperator{\Ascproper}{{\Asc^{long}}}
\DeclareMathOperator{\Ascbottom}{Asc_{bottom}}
\DeclareMathOperator{\Ascbottomproper}{{{Asc}_{bottom}^{long}}}
\DeclareMathOperator{\Rep}{Rep}
\DeclareMathOperator{\Dots}{Dots}
\DeclareMathOperator{\asc}{asc}
\DeclareMathOperator{\adjasc}{asc^{short}}
\DeclareMathOperator{\ascproper}{{\asc^{long}}}
\DeclareMathOperator{\des}{des}
\DeclareMathOperator{\rep}{rep}
\DeclareMathOperator{\comp}{comp}
\DeclareMathOperator{\boxcomp}{boxcomp}
\DeclareMathOperator{\bcomp}{\overline{comp}}
\DeclareMathOperator{\lmin}{lmin}
\DeclareMathOperator{\rmin}{rmin}
\DeclareMathOperator{\Rmin}{Rmin}
\DeclareMathOperator{\lmax}{lmax}
\DeclareMathOperator{\rmax}{rmax}
\DeclareMathOperator{\Min} {Min}
\DeclareMathOperator{\Max} {Max}
\DeclareMathOperator{\Rmax}{Rmax}

\DeclareMathOperator{\flatten}{Fl}
\DeclareMathOperator{\steepen}{St}
\DeclareMathOperator{\lev}{lev}   % levels
\DeclareMathOperator{\inter}{int} 
\DeclareMathOperator{\opener}{op}     % opener
\DeclareMathOperator{\closer}{cl}     % closer

\providecommand{\abs}[1]{\lvert#1\rvert}

\newcommand{\PATTERNPsillyplain}{
    %\draw[xshift=14pt, yshift=14pt, \cfill] (0,0) grid (3,3);
    \draw[xshift=14pt, yshift=14pt, thick] (0,2) -- (3,2);
    \draw[xshift=14pt, yshift=14pt, thick] (2,0) -- (2,3);
    \foreach \x/\y in {1/3,2/1,3/2} \draw[fill=black] (\x,\y) circle (7pt);
}

\newcommand{\PATTERNQsillyplain}{
    %\draw[xshift=14pt, yshift=14pt, \cfill] (0,0) grid (3,3);
    \draw[xshift=14pt, yshift=14pt, thick] (0,2) -- (3,2);
    \draw[xshift=14pt, yshift=14pt, thick] (1,0) -- (1,3);
    \foreach \x/\y in {1/1,2/2,3/3} \draw[fill=black] (\x,\y) circle (7pt);
}

\newcommand{\PATTERNPsilly}{
    %\draw[xshift=14pt, yshift=14pt, \cfill] (0,0) grid (3,3);
    \draw[xshift=14pt, yshift=14pt, thick] (0,2) -- (3,2);
    \draw[xshift=14pt, yshift=14pt, thick] (2,0) -- (2,3);
    \foreach \x/\y/\r in {1/3/6,2/1/6,3/2/8} \draw[fill=black] (\x,\y) circle (\r pt);
}

\newcommand{\PATTERNQsilly}{
    %\draw[xshift=14pt, yshift=14pt, \cfill] (0,0) grid (3,3);
    \draw[xshift=14pt, yshift=14pt, thick] (0,2) -- (3,2);
    \draw[xshift=14pt, yshift=14pt, thick] (1,0) -- (1,3);
    \foreach \x/\y/\r in {1/1/6,2/2/8,3/3/6}
    \draw[fill=black] (\x,\y) circle (\r pt);
%    \draw (0.2,1) node{\textup{\tiny(}};
%    \draw (1.1,1) node{\textup{\tiny)}};
}

\newcommand{\PATTERNP}{
    %\draw[xshift=14pt, yshift=14pt, \cfill] (0,0) grid (3,3);
    \draw[xshift=14pt, yshift=14pt, thick] (0,2) -- (3,2);
    \draw[xshift=14pt, yshift=14pt, thick] (1,0) -- (1,3);
    \foreach \x/\y/\r in {1/1/6,2/3/8,3/2/6} \draw[fill=black] (\x,\y) circle (\r pt);
}

\newcommand{\PATTERNQ}{
    %\draw[xshift=14pt, yshift=14pt, \cfill] (0,0) grid (3,3);
    \draw[xshift=14pt, yshift=14pt, thick] (0,2) -- (3,2);
    \draw[xshift=14pt, yshift=14pt, thick] (2,0) -- (2,3);
    \foreach \x/\y/\r in {1/2/6,2/1/6,3/3/8} \draw[fill=black] (\x,\y) circle (\r pt);
}

\newcommand{\PATTERNR}{
    %\draw[xshift=14pt, yshift=14pt, \cfill] (0,0) grid (3,3);
    \draw[xshift=14pt, yshift=14pt, thick] (0,1) -- (2,1);
    \draw[xshift=14pt, yshift=14pt, thick] (1,0) -- (1,2);
    \foreach \x/\y/\r in {1/1/6,2/2/8} \draw[fill=black] (\x,\y) circle (\r pt);
}

\newcommand{\PATTERNascent}{
    %\draw[xshift=14pt, yshift=14pt, \cfill] (0,0) grid (3,3);
    \draw[xshift=14pt, yshift=14pt, thick] (1,0) -- (1,2);
    \foreach \x/\y/\r in {1/1/6,2/2/8} \draw[fill=black] (\x,\y) circle (\r pt);
}

\newcommand{\PATTERNlascent}{
    %\draw[xshift=14pt, yshift=14pt, \cfill] (0,0) grid (3,3);
    \draw[xshift=14pt, yshift=14pt, thick, dotted] (0,1) -- (2,1);
    \draw[xshift=14pt, yshift=14pt, thick] (1,0) -- (1,2);
    \foreach \x/\y/\r in {1/1/6,2/2/8} \draw[fill=black] (\x,\y) circle (\r pt);
}

\newcommand{\patternPsillyplain}{\!\raisebox{-0.5em}{
  \begin{tikzpicture}[line width=0.7pt, scale=0.15]
    \tikzstyle{disc} = [circle,thin,draw=black, minimum size=1.7pt, inner sep=0pt ]
    \PATTERNPsillyplain
  \end{tikzpicture}}
}

\newcommand{\patternQsillyplain}{\!\raisebox{-0.5em}{
  \begin{tikzpicture}[line width=0.7pt, scale=0.15]
    \tikzstyle{disc} = [circle,thin,draw=black, minimum size=1.7pt, inner sep=0pt ]
    \PATTERNQsillyplain
  \end{tikzpicture}}
}

\newcommand{\patternPsilly}{\!\raisebox{-0.4em}{
  \begin{tikzpicture}[line width=0.7pt, scale=0.15]
    \tikzstyle{disc} = [circle,thin,draw=black, minimum size=1.7pt, inner sep=0pt ]
    \PATTERNPsilly
  \end{tikzpicture}}
}

\newcommand{\patternQsilly}{\!\raisebox{-0.4em}{
  \begin{tikzpicture}[line width=0.7pt, scale=0.15]
    \tikzstyle{disc} = [circle,thin,draw=black, minimum size=1.7pt, inner sep=0pt ]
    \PATTERNQsilly
  \end{tikzpicture}}
}

\newcommand{\patternP}{\!\raisebox{-0.4em}{
  \begin{tikzpicture}[line width=0.7pt, scale=0.15]
    \tikzstyle{disc} = [circle,thin,draw=black, minimum size=1.7pt, inner sep=0pt ]
    \PATTERNP
  \end{tikzpicture}}
}

\newcommand{\patternQ}{\!\raisebox{-0.4em}{
  \begin{tikzpicture}[line width=0.7pt, scale=0.15]
    \tikzstyle{disc} = [circle,thin,draw=black, minimum size=1.7pt, inner sep=0pt ]
    \PATTERNQ
  \end{tikzpicture}}
}

\newcommand{\patternR}{\!\raisebox{-0.2em}{
  \begin{tikzpicture}[line width=0.7pt, scale=0.15]
    \tikzstyle{disc} = [circle,thin,draw=black, minimum size=1.7pt, inner sep=0pt ]
    \PATTERNR
  \end{tikzpicture}}
}

\newcommand{\patternascent}{\!\raisebox{-0.2em}{
  \begin{tikzpicture}[line width=0.7pt, scale=0.15]
    \tikzstyle{disc} = [circle,thin,draw=black, minimum size=1.7pt, inner sep=0pt ]
    \PATTERNascent
  \end{tikzpicture}}
}

\newcommand{\patternlascent}{\!\raisebox{-0.2em}{
  \begin{tikzpicture}[line width=0.7pt, scale=0.15]
    \tikzstyle{disc} = [circle,thin,draw=black, minimum size=1.7pt, inner sep=0pt ]
    \PATTERNlascent
  \end{tikzpicture}}
}

\begin{document}

\title{Equidistributed statistics on matchings and permutations}
\author{Niklas Eriksen \and Jonas Sj{\"o}strand}
\address{School of Science and Technology, {\"O}rebro University
  SE-701 82 {\"O}rebro, Sweden} 
\email{niklas.eriksen@oru.se}
\address{Department of Mathematics, Royal Institute of Technology \\
  SE-100 44 Stockholm, Sweden}
\email{jonass@kth.se}
\keywords{permutation; pattern; matching; nesting; crossing}
\subjclass[2010]{Primary: 05A19, 05A15; Secondary: 05A05}

\date{November 11, 2011}

\begin{abstract}
We show
that the bistatistic of right nestings and right crossings in
matchings without left nestings is equidistributed with the number of
occurrences of two certain
patterns in permutations, and
furthermore that this equidistribution holds when refined to positions
of these statistics in matchings and permutations.
For this distribution we obtain a non-commutative
generating function which specializes to Zagier's generating function
for the Fishburn numbers after abelianization.

As a special case we obtain proofs of two conjectures of Claesson and Linusson.

Finally, we conjecture that our results can be generalized to
involving left crossings of matchings too.
\end{abstract}

\maketitle

\section{Introduction}
\noindent
Permutations of $n$ elements are counted by $n!$. To a
combinatorialist, objects counted by some number are often regarded as
a representation or concretisation of that number. For each
representation we obtain of a number sequence, we understand it
better. The more objects we have that are counted by $n!$, the better
we will understand that sequence of numbers. 

In 2010, Bousquet-M\'elou et al.~\cite{BouClaDukKit2010} showed
bijectively that the Fishburn numbers, starting with $1, 1, 2, 5, 15,
53, 217\dots$, count not only the unlabelled $(\mathrm{\bf 2} +
\mathrm{\bf 2})$-free posets on $n$ elements which define them, but
also ascent sequences of length $n$, permutations of $[n]$ avoiding
the pattern \patternPsillyplain, and matchings on $[2n]$ with no
neighbour nestings. This variety of representations of these numbers
indicate that they are well worth studying. 

Neighbour nestings in matchings come in two flavours, right and left
nesting. As it turns out, the number of matchings avoiding only one of
these flavours, say left nestings, are counted by $n!$. The same
holds, obviously, for permutations on $[n]$, and also for
$(\mathrm{\bf 2} + \mathrm{\bf 2})$-free posets with some natural
labelling, according to a 2011 paper by Claesson and
Linusson \cite{ClaLin2011}.

In more detail, we may view a matching on $[2n]$ as a set of arcs between
the elements of $[2n]$ laid out on the real line. A nesting is a pair of
arcs, one enclosing the other, and a right (left) nesting is a nesting
where the two right (left) elements are adjacent. In the matching
below, there is a right nesting between edges $\{2, 7\}$ and $\{4,
6\}$, but no other left or right nestings:

\begin{center}
\begin{tikzpicture}[line width=0.7pt, scale=0.4]
  \tikzstyle{disc} = [circle,thin,draw=black, minimum size=1.7pt,
    inner sep=0pt ]
  \draw (0.4, 0) -- (8.6, 0);
  \foreach \x in {1, ..., 8} \draw[fill=black] (\x, 0) circle (3pt);
  \foreach \x in {1, ..., 8} \draw (\x, -1) node{\scriptsize{\x}};
  \foreach \c/\r in {3/1,8/1.5} 
    \draw (\c, 0) arc (0:180:\r cm);
  \foreach \c/\r in {6/1,7/2.5} 
    \draw[line width=1.2pt] (\c, 0) arc (0:180:\r cm);
\end{tikzpicture}
\end{center}

Turning to permutations, an instance of the pattern
\patternPsillyplain in a permutation $\pi$ is a triple of entries
with the same positions and values, relative to each other, where
vertical lines indicate adjacency in position and horizontal lines 
indicate adjacency in value. Thus, the permutation 3412 contains
the pattern \patternPsillyplain once, namely {\bf 3}4{\bf 12}. (Note
that while 3{\bf 412} is a 312 pattern and the last two entries in
the pattern are adjacent in position, the first and last entries in
the pattern are not adjacent in value and thus it is not an instance
of the pattern \patternPsillyplain.) 

To prove that permutations and matchings without left nestings are
equinumerous, Claesson and Linusson took the following
approach. Consider an inversion table, that is an $n$-tuple $\alpha =
(\alpha_1, \dots, \alpha_n)$ such that 
$1 \leq \alpha_j \leq j$. To each inversion table $\alpha$, we obtain a
permutation if we traverse $\alpha$ from left to right, inserting the
number $j$ in position $\alpha_j$ from the left. Similarly, we obtain
a matching if we traverse $\alpha$ from left to right, inserting the
right end of a new arc to the far right and the left end immediately
to the left of the right end of the $\alpha_j$th right end from the
left. These two bijections induce a bijection between permutations of
$[n]$ and matchings of $[2n]$ with no left nestings, since we
would need to insert the left end of a new arc to the left of a
previous left end to obtain a left nesting. Note also that this
bijection ties the given examples of a matching and a permutation
together, via the inversion table $\alpha = (1, 2, 1, 2)$. 

During their work, Claesson and Linusson made the observation that the
number of left-nesting free matchings of $[2n]$ with $k$ right
nestings seemed to coincide with the number of permutations with $k$
occurrences of the pattern \patternPsillyplain, which would make an
interesting refinement. Indeed, their simple recursive bijection gives
a right nesting in step $j$ if and 
only if $\alpha_j < \alpha_{j-1}$ and the permutation in step $j$
contains the pattern \patternPsillyplain with top entry
$j$ if and only if $\alpha_j < \alpha_{j-1}$. Case closed, right?

Not really! As the bijection progresses, an arc added to the matching
may break previous right nestings. Similarly, the pattern
\patternPsillyplain may also be broken when more entries are included
in the permutation. For example, putting the left end of a new arc
between the arc with the right nesting in the matching drawn above
will break the nesting: 
\begin{center}
\begin{tikzpicture}[line width=0.7pt, scale=0.4]
  \tikzstyle{disc} = [circle,thin,draw=black, minimum size=1.7pt,
    inner sep=0pt ]
  \draw (0.4, 0) -- (10.6, 0);
  \foreach \x in {1, ..., 10} \draw[fill=black] (\x, 0) circle (3pt);
  \foreach \x in {1, ..., 10} \draw (\x, -1) node{\scriptsize{\x}};
  \foreach \c/\r in {3/1,9/2,10/1.5} 
    \draw (\c, 0) arc (0:180:\r cm);
  \foreach \c/\r in {6/1,8/3} 
    \draw[line width=1.2pt] (\c, 0) arc (0:180:\r cm);
\end{tikzpicture}
\end{center}
In the corresponding permutation, via inversion table $\alpha = (1, 2,
1, 2, 3)$, we should add a 5 in position 3, 
obtaining 34512. This permutation retains its occurrence of the pattern
\patternPsillyplain. On the other hand, inserting the 5 in position 4
would have destroyed the pattern and the corresponding arc added to
the matching would not have ruined the nesting. It is not hard to see
that the total number of right nestings in all matchings of $[2n]$
equals the total number of occurrences of \patternPsillyplain in
permutations of length $n$, but their equidistribution is not clear
from the bijection.

This paper presents a proof of the bijection. We actually show
that the bistatistic of right nestings and right crossings
(pairs of arcs with neighbouring right ends that cross each other) in
matchings is equidistributed with the number of occurrences of the
patterns \patternPsillyplain and \patternQsillyplain in permutations, and
furthermore that this equidistribution holds when refined to
positions/values of these statistics in matchings and permutations. 

Our proof is built on the bijection from Claesson and Linusson, but
the inversion table in their proof is replaced by a
richer structure in ours, namely certain fillings of partition shapes.
(We were inspired by Krattenthaler~\cite{kra2006}.)

Given a set $X\subseteq\{2,3,\dotsc,n\}$ associated with a partition shape,
we obtain a bijection between some matchings of $[2n]$ and some
permutations of $[n]$ such that the occurrences of right adjacencies
and patterns in positions in $X$ match perfectly, whereas other
occurrences of right adjacencies and patterns may not match at all. We
then apply the sieve principle to obtain a perfect match in the case
$X = \emptyset$.

The bijections will be presented in more detail in the following
four sections, where Section \ref{sc: conjecture 20} ties our first
results together into a proof of Conjecture 20 in \cite{ClaLin2011}. 
We then introduce noncommutative formal power series in
Section \ref{sc: NFPS}. Section \ref{sc: generating functions} gives
generating functions of matchings and permutations as noncommutative
formal power series and we prove that these two generating functions
are identical, giving the equidistribution when refined to
positions. We then abelianize the generating functions and demonstrate
that they coincide with previously known generating functions in
special cases. At the end of Section~\ref{sc: abel} we obtain a proof of
Conjecture~21 in~\cite{ClaLin2011}. Finally, in Section~\ref{sc: open}
we conclude with some results and conjectures
concerning the number of left crossings.

\section{Shapes and fillings}
\noindent
For a nonnegative integer $n\in\N$ we let $[n]$ denote the interval
$\{1,2,\dotsc,n\}$; in particular, $[0]:=\emptyset$.

\subsection{Shapes}
\noindent
A \emph{partition}
is a weakly increasing sequence of positive integers
$\lambda=(\lambda_1\le\lambda_2\le\dotsb\le\lambda_{\ell})$,
and we will identify
$\lambda$ with its \emph{shape}
which is the bottom-justified arrangement of squares, called \emph{cells},
with $\lambda_i$ cells
in the $i$th column from the left. The left part of Figure~\ref{fig: staircase}
shows an example. (This orientation of the shape is more suitable
for us than the usual French or English Young diagram.)

Define the \emph{lazy set} of $\lambda$, denoted by $X(\lambda)$,
to be the set of indices $i$ such that $\lambda_{i-1}=\lambda_{i}$.
Also, let $\ell(\lambda)=\ell$ denote the
\emph{length} (i.e.~the number of columns)
of $\lambda$.
A shape is said to be \emph{flat} if its
rows have distinct lengths and \emph{steep} if its columns have distinct
lengths. If it is both flat and steep it is a \emph{staircase} shape.
(Note that a flat shape is completely determined by its length
and its lazy set.)

We will number the columns of a shape from left to right and the rows
from bottom to top, and the cells will be identified with their coordinate
pairs $(i,j)$ where $i$ is the column index and $j$ is the row index.

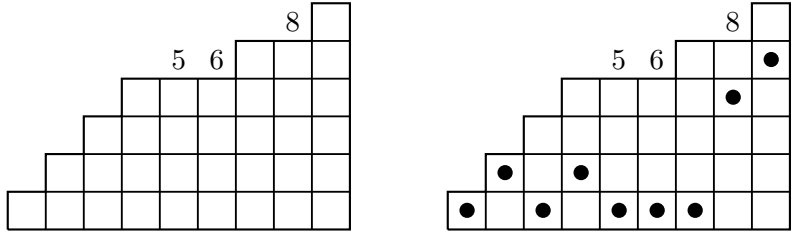
\begin{figure}
\centering
\begin{tikzpicture}[line width=0.7pt, scale=0.5]
  \tikzstyle{disc} = [circle,thin,draw=black, minimum size=1.7pt,
    inner sep=0pt ]
  \foreach \x/\y in {1/1,2/2,3/3,6/4,7/5} \draw[xshift=14pt,
    yshift=14pt ] (\x,\y) -- (9,\y);
  \foreach \x/\y in {1/1,2/2,3/3,4/4,5/4,6/4,7/5,8/5} \draw[xshift=14pt,
    yshift=14pt ] (\x,\y) -- (\x, 0); 
  \draw[xshift=14pt, yshift=14pt, thick] (0, 0) -- (9, 0) --
  (9, 6) -- (8, 6) -- (8, 5) -- (6, 5) -- (6, 4) -- (3, 4) --
  (3, 3) -- (2, 3) -- (2, 2)
  -- (1, 2) -- (1, 1) -- (0, 1) -- (0, 0);
  \foreach \x/\y in {5/4,6/4,8/5} \draw[xshift=14pt,
    yshift=14pt] (\x cm-0.5cm,\y cm+0.5cm) node{\x}; 
\end{tikzpicture}
\hspace{1cm}
\begin{tikzpicture}[line width=0.7pt, scale=0.5]
  \tikzstyle{disc} = [circle,thin,draw=black, minimum size=1.7pt,
    inner sep=0pt ]
  \foreach \x/\y in {1/1,2/2,3/3,6/4,7/5} \draw[xshift=14pt,
    yshift=14pt ] (\x,\y) -- (9,\y);
  \foreach \x/\y in {1/1,2/2,3/3,4/4,5/4,6/4,7/5,8/5} \draw[xshift=14pt,
    yshift=14pt ] (\x,\y) -- (\x, 0); 
  \draw[xshift=14pt, yshift=14pt, thick] (0, 0) -- (9, 0) --
  (9, 6) -- (8, 6) -- (8, 5) -- (6, 5) -- (6, 4) -- (3, 4) --
  (3, 3) -- (2, 3) -- (2, 2)
  -- (1, 2) -- (1, 1) -- (0, 1) -- (0, 0);
  \foreach \x/\y in {5/4,6/4,8/5} \draw[xshift=14pt,
    yshift=14pt] (\x cm-0.5cm,\y cm+0.5cm) node{\x}; 
  \foreach \x/\y in {1/1,2/2,3/1,4/2,5/1,6/1,7/1,8/4,9/5}
  \draw[fill=black] (\x,\y) circle (5pt);
\end{tikzpicture}
\caption{Left: The shape $\lambda = (1, 2, 3, 4,
  4, 4, 5, 5, 6)$ with $\ell(\lambda) = 9$ and lazy set $X(\lambda) = \{5,
  6, 8\}$. Right: The filling $T$ with
  $\lambda(T) = \lambda$ as before and dots $\alpha(T) = (1, 2, 1, 2,
  1, 1, 1, 4, 5)$.
  %We have minimal dots $\Min(T) = \{1, 3, 5, 6, 7\}$ and
  %maximal dots $\Max(T) = \{1, 2\}$, as well as $\Rmax(T) = \{2, 9\}$,
  %$\Des(T) = \{3, 5\}$, $\Asc(T) = \{2, 4, 8, 9\}$, and $\Rep(T) = \{6, 7\}$. 
} \label{fig: staircase} 
\end{figure}

\subsection{Fillings}
\noindent
A \emph{0-1-filling}, or simply a \emph{filling},
of a shape is an assignment of the number 0
or 1 to each cell of the shape.
If each column sum is positive
or equal to one, the filling is said to be \emph{column-positive} or
\emph{column-strict}, respectively. The attributes
\emph{row-positive} and \emph{row-strict} are defined analogously,
and a filling that is both row- and column-positive is simply called
\emph{positive} and if it is row- and column-strict it is called
\emph{strict}.

The total sum (i.e.~the number of ones) of a filling $T$ is denoted by $n(T)$.
We let $\lambda(T)$ denote the underlying shape of $T$
and for convenience we write $\ell(T)$ and $X(T)$ as
shorthand for $\ell(\lambda(T))$ and $X(\lambda(T))$.
A filling also inherits the properties ``flat'' and ``steep''
from its underlying shape.

Let $\fillings(\lambda)$ denote the set of fillings of the shape $\lambda$,
and let $\fillings=\bigcup_{\lambda}\fillings(\lambda)$ be the set of
all fillings (of nonempty shapes). Also, if $p$ is a filling property we let
$\fillings_p(\lambda)$ denote the set of fillings of $\lambda$ with
property $p$, and if $q$ is a shape property we let
$\fillings_p^q$ denote the set of fillings with property $p$ of
shapes with property $q$. For instance, we write
$\flatcolstrictfillings$ for the set of flat column-strict fillings.

When drawing fillings we will omit the zeroes and
replace the ones by dots, and to each filling $T$ we associate the set
of dots
$\Dots(T)=
\{(i,j)\colon\ \text{there is a dot at}\ (i,j)\}$
and a set sequence
$\alpha(T) = (\alpha_1, \dots, \alpha_{\ell(T)})$ where
$\alpha_i=\{j\colon (i,j)\in\Dots(T)\}$
contains the row indices of the dots in column $i$.
If $T$ is column-strict all sets $\alpha_i$ are singletons
and, for convenience, in this case we will abuse notation and
make no distinction between $\alpha_i=\{j\}$ and $j$ itself.
(Note that if $\lambda$ is the staircase shape of length $n$ then
$\colstrictfillings(\lambda)$ can be interpreted as the set
of inversion tables of size $n$.)

Given two fillings $T$ and $T'$, we define their
\emph{direct sum} $T\oplus T'$ to be the filling obtained by
putting $T'$ to the north-east of $T$ and filling out
the rectangle below $T'$ with empty cells. Figure~\ref{fig:fillingdirectsum}
shows an example.
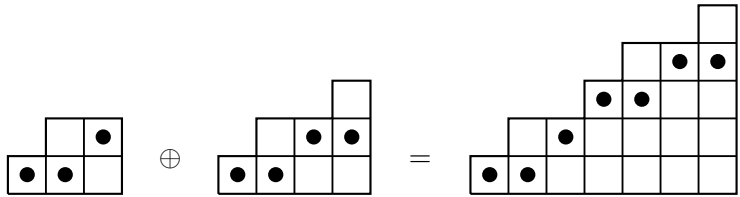
\begin{figure}
\centering
$
\raisebox{-1em}{
\begin{tikzpicture}[line width=0.7pt, scale=0.5]
  \tikzstyle{disc} = [circle,thin,draw=black, minimum size=1.7pt,
    inner sep=0pt ]
  \foreach \x/\y in {1/1} \draw[xshift=14pt,
    yshift=14pt ] (\x,\y) -- (3,\y);
  \foreach \x/\y in {1/1,2/2} \draw[xshift=14pt,
    yshift=14pt ] (\x,\y) -- (\x, 0); 
  \draw[xshift=14pt, yshift=14pt, thick] (0, 0) -- (3, 0) --
  (3, 2) -- (1, 2) -- (1, 1) -- (0, 1) -- (0, 0);
  \foreach \x/\y in {1/1,2/1,3/2}
  \draw[fill=black] (\x,\y) circle (5pt);
\end{tikzpicture}
}
\ \ \oplus\ \ 
\raisebox{-1em}{
\begin{tikzpicture}[line width=0.7pt, scale=0.5]
  \tikzstyle{disc} = [circle,thin,draw=black, minimum size=1.7pt,
    inner sep=0pt ]
  \foreach \x/\y in {1/1,3/2} \draw[xshift=14pt,
    yshift=14pt ] (\x,\y) -- (4,\y);
  \foreach \x/\y in {1/1,2/2,3/2} \draw[xshift=14pt,
    yshift=14pt ] (\x,\y) -- (\x, 0); 
  \draw[xshift=14pt, yshift=14pt, thick] (0, 0) -- (4, 0) --
  (4, 3) -- (3, 3) -- (3, 2) -- (1, 2) -- (1, 1) -- (0, 1) -- (0, 0);
  \foreach \x/\y in {1/1,2/1,3/2,4/2}
  \draw[fill=black] (\x,\y) circle (5pt);
\end{tikzpicture}
}
\ \ =\ \ 
\raisebox{-1em}{
\begin{tikzpicture}[line width=0.7pt, scale=0.5]
  \tikzstyle{disc} = [circle,thin,draw=black, minimum size=1.7pt,
    inner sep=0pt ]
  \foreach \x/\y in {1/1,3/2,4/3,6/4} \draw[xshift=14pt,
    yshift=14pt ] (\x,\y) -- (7,\y);
  \foreach \x/\y in {1/1,2/2,3/2,4/3,5/4,6/4} \draw[xshift=14pt,
    yshift=14pt ] (\x,\y) -- (\x, 0); 
  \draw[xshift=14pt, yshift=14pt, thick] (0, 0) -- (7, 0) --
  (7, 5) -- (6, 5) -- (6, 4) -- (4, 4) -- (4, 3) -- (3, 3) --
  (3, 2) -- (1, 2) -- (1, 1) -- (0, 1) -- (0, 0);
  \foreach \x/\y in {1/1,2/1,3/2,4/3,5/3,6/4,7/4}
  \draw[fill=black] (\x,\y) circle (5pt);
\end{tikzpicture}
}
$
\caption{Direct sum of fillings.} \label{fig:fillingdirectsum} 
\end{figure}
A filling that cannot be written as a direct sum of
two fillings are called \emph{irreducible}.
Every filling can be written uniquely as a direct sum of
irreducible fillings, called its \emph{irreducible components},
and we let $\comp(T)$ denote the number of irreducible components of $T$.

A row-strict filling $T$ may be
\emph{steepened} by, for each $k$, merging
all columns of length $k$ into one single column (with dots
in the same rows), see Figure~\ref{fig:steepen}.
Analogously, a column-strict
filling $T$ may be \emph{flattened} by merging
rows of equal length. The resulting fillings are
denoted by $\steepen(T)$ and $\flatten(T)$, respectively,
and obviously $\steepen(T)$ is steep and $\flatten(T)$ is flat.

\begin{figure}
\centering
\begin{tikzpicture}[line width=0.7pt, scale=0.5]
  \tikzstyle{disc} = [circle,thin,draw=black, minimum size=1.7pt,
    inner sep=0pt ]
  \foreach \x/\y in {2/1,3/2,3/3,5/4,5/5} \draw[xshift=14pt,
    yshift=14pt ] (\x,\y) -- (6,\y);
  \foreach \x/\y in {1/1,2/1,3/2,4/4,5/4} \draw[xshift=14pt,
    yshift=14pt ] (\x,\y) -- (\x, 0); 
  \draw[xshift=14pt, yshift=14pt, thick] (0, 0) -- (6, 0) --
  (6, 6) -- (5, 6) -- (5, 4) -- (3, 4) -- (3, 2) -- (2, 2) --
  (2, 1) -- (0, 1) -- (0, 0);
  \foreach \x/\y in {2/1,4/2,5/4,6/3,6/5,6/6}
  \draw[fill=black] (\x,\y) circle (5pt);
\end{tikzpicture}
\hspace{1cm}
$\xrightarrow{\mbox{\footnotesize steepen}}$
\hspace{1cm}
\begin{tikzpicture}[line width=0.7pt, scale=0.5]
  \tikzstyle{disc} = [circle,thin,draw=black, minimum size=1.7pt,
    inner sep=0pt ]
  \foreach \x/\y in {1/1,2/2,2/3,3/4,3/5} \draw[xshift=14pt,
    yshift=14pt ] (\x,\y) -- (4,\y);
  \foreach \x/\y in {1/1,2/2,3/4} \draw[xshift=14pt,
    yshift=14pt ] (\x,\y) -- (\x, 0); 
  \draw[xshift=14pt, yshift=14pt, thick] (0, 0) -- (4, 0) --
  (4, 6) -- (3, 6) -- (3, 4) -- (2, 4) -- (2, 2) -- (1, 2) --
  (1, 1) -- (0, 1) -- (0, 0);
  \foreach \x/\y in {1/1,3/2,3/4,4/3,4/5,4/6}
  \draw[fill=black] (\x,\y) circle (5pt);
\end{tikzpicture}
\caption{A row-strict filling to the left and the steepening of it to the
right.} \label{fig:steepen} 
\end{figure}
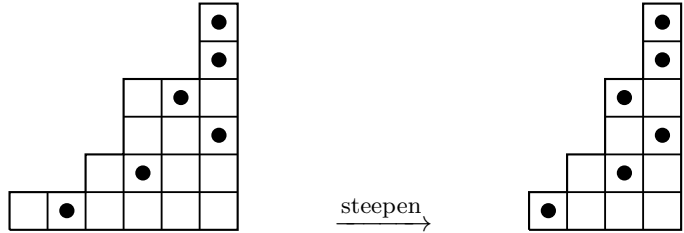

For a filling $T$, let $\Min(T)=\{i\colon (i,1)\in\Dots(T)\}$
denote the set of columns with a dot on the bottom row,
and let $\Max(T)=\{i\colon (i,\lambda(T)_i)\in\Dots(T)\}$
denote the set of columns
with a dot in their topmost cell. The cardinalities of these sets are denoted
by $\min(T)$ and $\max(T)$. Also, let
\[
\rmax(T)=\#\{(i,j)\in\Dots(T)\colon \lambda_i-j<\lambda_{i'}-j'
\ \text{for any}\ (i',j')\in\Dots(T)\ \text{with}\ i'>i\}
\]
denote the number of dots that are strictly closer to the top of its
column than is any dot strictly to the right; and
let
\[
%\lmin(T)=\#\{(i,j)\in\Dots(T)\colon \Dots(T)\cap([i]\times[j-1])=\emptyset\}
\lmin(T)=\#\{(i,j)\in\Dots(T)\colon i<i'
\ \text{for any}\ (i',j')\in\Dots(T)\ \text{with}\ j'<j\}
\]
denote the number of dots that are strictly to the left of any
dot strictly below.

For column-strict fillings $T$
we extend our notation and let $\Rmax(T):= \{i \colon \lambda_i - \alpha_i
< \lambda_j - \alpha_j,\ \forall j>i\}$ denote the set of columns
whose dots are closer to the top than is any dot to the right.
Note that this is compatible with our previous notation in the sense that
$\#\Rmax(T)=\rmax(T)$.

Also, define the sets
\begin{align*}
\Des(T) &:= \{i \colon \alpha_{i-1} > \alpha_i\} \\
\Asc(T) &:= \{i \colon \alpha_{i-1} < \alpha_i\} \\
\Rep(T) &:= \{i \colon \alpha_{i-1} = \alpha_i\}
\end{align*}
of descents, ascents, and repetitions of a column-strict filling $T$.
%We also let $\wAsc(T):=\Asc(T)\cup\Rep(T)$ and $\wDes(T):=\Des(T)\cup\Rep(T)$
%denote the set of \emph{weak ascents} and \emph{weak descents},
%respectively.
The cardinalities of
these sets are denoted by $\des(T)$, $\asc(T)$, and $\rep(T)$.

\begin{example}
We will use the column-strict filling $T$ with shape
$\lambda = \lambda(T) = (1, 2, 3, 4, 4, 4, 5, 5, 6)$ and dot positions
$\alpha = \alpha(T) = 
(1, 2, 1, 2, 1, 1, 1, 4, 5)$ as a running example. In Figure \ref{fig:
staircase}, we give a graphical representation of $T$.  We have
minimal dots $\Min(T) = \{1, 3, 5, 6, 7\}$ and maximal dots $\Max(T) =
\{1, 2\}$, as well as $\Rmax(T) = \{2, 9\}$, $\Des(T) = \{3,
5\}$, $\Asc(T) = \{2, 4, 8, 9\}$, and $\Rep(T) = \{6, 7\}$. 
\end{example}

A pair of dots is said to be \emph{descending} if
one dot is strictly to the south-east of the other one,
and \emph{ascending} if one dot is strictly to the north-east of the other one.
A sequence of dots in which each dot is strictly to the
south-east (north-east) of the previous dot is called a descending
(ascending) chain.

\section{Permutations and bivincular patterns}
\noindent
Let $\calS_n$ be the set of permutations $\pi$ of $[n]=\{1,2,\dotsc,n\}$,
let $\calS=\bigcup_{n \ge 1} \calS_n$ denote the set of all
permutations, and let $n(\pi)$ denote the size of a permutation
$\pi$.

We will write a permutation $\pi\in\calS_n$
in \emph{single-row notation}, that is, as
the sequence $\pi(1)\dotsm\pi(n)$ of values. Let
$\Rmin(\pi) = \{\pi(j)\in[n] : j < k \Rightarrow \pi(j) < \pi(k)\}$ denote the
set of right minima of $\pi$ and let $\Rmax(\pi) = \{\pi(j)\in[n] : j < k
\Rightarrow \pi(j) > \pi(k)\}$ denote the set of right maxima of $\pi$.

Given a permutation $\pi$ of $[n]$ and a permutation $\pi'$ of $[n']$,
we define their \emph{direct sum} $\pi\oplus\pi'$ as the permutation
obtained by juxtaposition of (the single-row notation of)
$\pi$ and $\pi'$ whereafter the entries of
$\pi'$ are increased by $n(\pi)$. Example: $213\oplus2413=2135746$.
Every permutation $\pi$ can be written uniquely as a
direct sum of irreducible components, the number of which we
denote by $\comp(\pi)$.
Example: $\comp(2135746)=\comp(21\oplus 1\oplus 2413)=3$.

Bivincular patterns were formally defined in
\cite{BouClaDukKit2010}. These are classical patterns in permutations
drawn in permutation diagrams with optional horizontal and vertical
bars, the horizontal bars indicating adjacency in value and the
vertical bars indicating adjacency in position. Since we are interested not
only in the \emph{number} of pattern occurrences but also in their
\emph{positions} we will draw one of the dots in a pattern
slightly bigger to signify that an occurrence of the
pattern is identified with the \emph{value} of that dot.

In particular, an occurrence of the pattern \patternascent in a permutation
$\pi$ is an entry $\pi(j)$ in the single-row
notation of $\pi$ such that $\pi(j-1)<\pi(j)$. Such an occurrence is
called an \emph{ascent top}, or simply an \emph{ascent},
and the set of ascents is denoted by $\Asc(\pi)$.
We will also need the set of \emph{ascent bottoms}
$\Ascbottom(\pi) = \{\pi(j) :
\pi(j) < \pi(j+1)\}$ towards the end of this article.

Let us refine the ascent statistic. An occurence of the
pattern \patternR is an ascent $\pi(j)$ such that
$\pi(j-1)=\pi(j)-1$. These are known as \emph{adjacencies},
especially in the context of genome rearrangements
\cite{Burstein2010,Eriksen2008b}. To avoid overloading with
adjacencies in matchings, we call the pattern a \emph{short ascent}. The
\emph{long ascents}, that is where $\pi(j-1) < \pi(j) - 1$, are
occurrences of the pattern \patternlascent, where the dashed line
indicate a lack of adjacency in value.

We refine the long ascents further. Namely,
an occurrence of the pattern \patternP is an ascent $\pi(j)$
such that the entry $\pi(j)-1$ is to the right of $\pi(j)$ in the
single-row notation of $\pi$, and
an occurrence of the pattern \patternQ
is an ascent $\pi(j)$ such that $\pi(j)-1$ is to
the left of $\pi(j-1)$. Let $P(\pi)$, $Q(\pi)$, $\Ascproper(\pi)$ and
$\adjAsc(\pi)$ denote the set of occurrences of \patternP, \patternQ,
\patternlascent and \patternR in $\pi$, 
respectively. Clearly, the three sets \patternP, \patternQ and
\patternR are disjoint and their union is $\Asc(\pi)$.

\subsection{Barred permutations}
\noindent
\begin{defi}
A \emph{barred permutation} $\bpi$ is a permutation
where zero or more ascents are marked by a bar.
The set of barred elements is denoted by
$\barred(\bpi)\subseteq\Asc(\bpi)$.

The set of barred permutations is denoted by $\bcalS$.
\end{defi}

The \emph{direct sum} $\bpi\oplus\bpi'$ of two barred permutations
is defined exactly as for non-barred permutations, with the additional rule
that the barred entries of $\bpi$ and $\bpi'$ keep
their bars in $\bpi\oplus\bpi'$.
Every barred permutation $\bpi$ can be written uniquely as a
direct sum of irreducible barred components, the number of which we
denote by $\bcomp(\bpi)$. Note that $\bcomp(\bpi)$ may be smaller than
$\comp(\bpi)$, the number of irreducible components of $\bpi$ disregarding
the bars. Example: $\bcomp(21\bar{3}574\bar{6})=
\bcomp(21\bar{3}\oplus 241\bar{3})=2$ while
$\comp(2135746)=3$.

We now define a map $\phi: \flatcolstrictfillings \rightarrow\bcalS$
from flat column-strict fillings to barred permutations.
For a flat column-strict filling $T$, this
map iterates through the sequence $\alpha = \alpha(T)$ to create a
barred permutation $\bpi$.

Start with the single block $\block{1}$. Now, for $i = 2,3,\dotsc,n(T)$,
if $i \in X(T)$, insert $\bar{i}$ at the right end
of the $\alpha_i$th block from the left. If $i\notin X(T)$, insert the
new block $\block{i}$ immediately before the $\alpha_i$th block from
the left or to the right of the rightmost block if $\alpha_i=\lambda(T)_i$.
Having reached $n(T)$, we dissolve the block structure to
obtain (the single-row notation of) $\bpi$.

\begin{example}
Using $\phi$, let us map the flat column-strict filling $T$ with shape
$\lambda(T) = (1, 2, 3, 4, 4, 4,
5, 5, 6)$ and $\alpha = \alpha(T) = (1, 2, 1, 2, 1, 1, 1, 4, 5)$ into
$\bcalS$. Recall that the lazy set of $\lambda(T)$ is
$X(T) = \{5, 6, 8\}$. The block sequences obtained in each step are 
\[
\begin{split}
& \block{1} \\
& \block{1} \block{2} \\
& \block{3} \block{1} \block{2} \\
& \block{3} \block{4} \block{1} \block{2} \\
& \block{3\ \bar{5}} \block{4} \block{1} \block{2} \\
& \block{3\ \bar{5}\ \bar{6}} \block{4} \block{1} \block{2} \\
& \block{7} \block{3\ \bar{5}\ \bar{6}} \block{4} \block{1} \block{2} \\
& \block{7} \block{3\ \bar{5}\ \bar{6}} \block{4} \block{1\ \bar{8}} \block{2} \\
& \block{7} \block{3\ \bar{5}\ \bar{6}} \block{4} \block{1\ \bar{8}} \block{9} \block{2}
\end{split}
\]
Dissolving the block structure gives $\bpi = \phi(T) = 
7\ 3\ \bar{5}\ \bar{6}\ 4\ 1\ \bar{8}\ 9\ 2$. We note that
$P(\bpi) = \{5\}$, $Q(\bpi) = \{8\}$, and $\adjAsc(\bpi)=\{6,9\}$,
and hence the barred elements $X(\bpi) = \{5, 6, 8\}$ is a subset of
$P(\bpi) \cup Q(\bpi) \cup \adjAsc(\bpi) =
\{5, 6, 8, 9\}$. We also note that $4 \in \adjAsc(3\ 4\ 1\ 2)$,
but when $5$ is added, the short ascent pattern is destroyed.
Patterns on barred
elements can, on the other hand, never be destroyed.
\end{example}

There is an inverse $\phi^{-1}$: Given a barred
permutation $\bpi$ we construct a flat column-strict filling $T$ by letting
$\ell(T) = n(\bpi)$ and $X(T) = \barred(\bpi)$ and letting
$\alpha(T)_i$ be the number of entries weakly to the left of $i$ in
the single-row notation of $\pi$ that are not greater than $i$ and
do not belong to $\barred(\bpi)$.

\begin{theo}\label{th:Sbij}
The map $\phi\colon \flatcolstrictfillings \rightarrow \bcalS$
is a bijection.
Furthermore, we have
\begin{equation}\label{eq:Sdirect}
\phi(T\oplus T')=\phi(T)\oplus\phi(T')
\end{equation}
for any $T,T'\in\flatcolstrictfillings$, and if
$\phi(T)=\bpi$ the following equations hold.
\begin{subequations}
\begin{align}
n(T) &= n(\bpi), \label{eq:Sn} \\
\comp(T) &= \bcomp(\bpi), \label{eq:Scomp} \\
\Max(T) &= \Rmin(\bpi), \label{eq:Smax} \\
\Rmax(T) &= \Rmax(\bpi), \label{eq:Srmax} \\
X(T) &= \barred(\bpi), \label{eq:SX} \\
\Des(T)\cap X(T) &= P(\bpi)\cap \barred(\bpi), \label{eq:SP} \\
\Asc(T)\cap X(T) &= Q(\bpi)\cap \barred(\bpi), \label{eq:SQ} \\
\Rep(T)\cap X(T) &= \adjAsc(\bpi)\cap \barred(\bpi). \label{eq:SR} 
\end{align}
\end{subequations}
\end{theo}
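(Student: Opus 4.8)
\emph{Proof plan.} The plan is to verify that $\phi$ and the explicitly described map $\phi^{-1}$ are mutually inverse, and then to obtain each statistic identity either from the step of the construction that introduces column $i$ or from the closed formula defining $\phi^{-1}$. I would begin by recording the elementary facts about flat shapes used throughout: since $\lambda=\lambda(T)$ is weakly increasing with distinct row lengths, consecutive column lengths differ by at most $1$ and every length $1,\dotsc,\lambda_{\ell}$ occurs, so after the first $i$ columns have been processed the number of blocks is exactly $\#\{v\le i\colon v\notin X(T)\}=\lambda_i$. In particular $\alpha_i$ always designates a legitimate block (or, when $i\notin X(T)$ and $\alpha_i=\lambda_i$, the slot just past the last block), so the recursion never stalls; the same count shows that for every $\bpi\in\bcalS$ the filling $\phi^{-1}(\bpi)$ is a genuine flat column-strict filling. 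The direct sum formula \eqref{eq:Sdirect} then follows by inspection: for $T\oplus T'$ the first $\ell(T)$ columns reproduce the run of $\phi$ on $T$, the column $\ell(T)+1$ is never in the lazy set so it opens a fresh block, and since exactly $\lambda(T)_{\ell(T)}$ blocks are present after $T$ has been processed, every later insertion coming from $T'$ lands strictly to the right of all blocks of $\phi(T)$ and mimics the run of $\phi$ on $T'$ with values shifted by $n(T)$.

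The core is a structural lemma about the run of $\phi$ on a fixed $T$, proved by induction on the step: blocks never move past one another or merge (an insertion only places a new block somewhere or appends a bar to the right end of an existing block), each block is a contiguous factor of the current word consisting of one non-barred founder followed by the maximal run of barred entries trailing it, and after column $i$ has been processed the dissolved word equals $\bpi|_{[i]}$, the subword of $\bpi:=\phi(T)$ on the values $\{1,\dotsc,i\}$ with bars retained. Granting this, the relative order of the values $\le i$ is frozen from step $i$ on, and since each block carries exactly one non-barred value one checks, in both the barred and the non-barred case, that $\alpha_i=\#\{$non-barred $v\le i$ lying weakly left of $i$ in $\bpi\}$; together with $X(T)=\barred(\bpi)$ and $\ell(T)=n(\bpi)$ this is exactly the rule defining $\phi^{-1}$, so $\phi^{-1}\circ\phi=\id$. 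For the reverse composition I would run $\phi$ on $T:=\phi^{-1}(\bpi)$ and show by induction on $i$ that the word after step $i$ equals $\bpi|_{[i]}$: since the non-barred values $<i$ lying left of (resp.\ right of) $i$ in $\bpi$ form a prefix (resp.\ suffix) of the $\bpi$-order of all non-barred values $<i$, the prescribed $\alpha_i$ points exactly at the gap into which $i$ (or $\bar i$) must go. Hence $\phi\circ\phi^{-1}=\id$ and $\phi$ is a bijection; \eqref{eq:Sn} and \eqref{eq:SX} are immediate from the construction, and \eqref{eq:Scomp} follows from \eqref{eq:Sdirect} and bijectivity, since $\phi$ then carries irreducible fillings to irreducible barred permutations.

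The remaining identities I would treat pointwise in $i$, using that $\lambda_i=\#\{v\le i\colon v\text{ not barred}\}$, so that $\lambda_i-\alpha_i$ is the number of non-barred values $<i$ lying to the right of $i$ in $\bpi$. First, $\alpha_i=\lambda_i$ holds exactly when every value $<i$ lies to the left of $i$ in $\bpi$ (for $i\in X(T)$ this uses that $\bar i$ is appended to the last block), which is the definition of $i\in\Rmin(\bpi)$, giving \eqref{eq:Smax}. For \eqref{eq:Srmax} I would show $i\in\Rmax(\bpi)$ iff $\lambda_i-\alpha_i<\lambda_j-\alpha_j$ for all $j>i$: if $i$ is a right maximum then every value $j>i$ lies left of $i$, whence the non-barred values right of $i$ form a proper subset of the non-barred values $<j$ right of $j$, the surplus being $i$ itself or the non-barred value heading the barred chain ending at $\bar i$, so $\lambda_j-\alpha_j>\lambda_i-\alpha_i$; conversely, if some value $>i$ lies right of $i$, let $j$ be the rightmost such value, note every value right of $j$ is then $<i$, and deduce $\lambda_j-\alpha_j\le\lambda_i-\alpha_i$. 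Finally \eqref{eq:SP}--\eqref{eq:SR} come from a single case analysis at step $i$ for $i\in X(T)$: the bar $\bar i$ is appended to the right end of block $\alpha_i$, while $i-1$ sits at step $i$ in block $\alpha_{i-1}$, either as its founder or at its right end, so (using that block order is preserved, and that when $\alpha_i=\alpha_{i-1}$ the entry $\bar i$ lands immediately after $i-1$) the cases $\alpha_i<\alpha_{i-1}$, $\alpha_i>\alpha_{i-1}$, $\alpha_i=\alpha_{i-1}$ say respectively that $i-1$ lies right of $i$, that the predecessor of $i$ lies right of $i-1$, and that $i-1$ is the predecessor of $i$, i.e.\ that $i\in P(\bpi)$, $i\in Q(\bpi)$, $i\in\adjAsc(\bpi)$.

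I expect the structural lemma, and the bookkeeping required to keep the inductive description of the block word correct across both kinds of insertion, to be the main obstacle; everything else is a short deduction from it. Among the statistic identities \eqref{eq:Srmax} needs the most care, since the tempting equivalence ``$j>i$ lies right of $i$ iff $\lambda_j-\alpha_j\le\lambda_i-\alpha_i$'' is \emph{false}, so the witness $j$ must be chosen correctly in each direction.
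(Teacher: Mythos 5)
Your proposal is correct and follows essentially the same route as the paper: bijectivity is obtained by verifying the explicitly described $\phi^{-1}$, the direct-sum identity \eqref{eq:Sdirect} is checked by inspection, and each of \eqref{eq:Sn}--\eqref{eq:SR} is read off from the insertion step of the block construction, which is exactly the paper's (much terser) argument. Your structural lemma about blocks and your witness argument for \eqref{eq:Srmax} merely flesh out what the paper dismisses as ``checked similarly,'' so there is no substantive difference in method.
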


\begin{proof}
First we must check that $\phi$ is well-defined.
Each barred entry of $\phi(T)$ is preceded by a smaller entry since it is
not the leftmost entry in its block and any greater entries in the block
are inserted to its right. Thus, the barred entries are ascents and
it follows that $\phi(T) \in \bcalS$.

That $\phi^{-1}$ as defined above really is an inverse of
$\phi$ follows from the observation that each block
contains exactly one entry that is not in $X(T)=\barred(\phi(T))$.

Turning to the set of equalities, \eqref{eq:Sn} and~\eqref{eq:SX} follow
directly from the definition of $\phi$, and~\eqref{eq:Scomp} is a consequence
of the relation~\eqref{eq:Sdirect} which is
straightforward to verify. To check~\eqref{eq:Smax} we note that a
maximal dot in column 
$i$ in $T$ will put the entry $i$ to the right of every entry less than
$i$ in $\bpi$. Hence $i \in \Rmin(\bpi)$. Conversely, if $i \in \Rmin(\bpi)$,
$\phi^{-1}$ will put the dot in the topmost cell of column $i$ in
$\phi^{-1}(\bpi)$, since $i$ has no smaller entries to its right in
$\bpi$. Equation~\eqref{eq:Srmax} is checked similarly:
If $i \in \Rmax(T)$, it will not have any larger
entries to its right in $\bpi$, hence $i \in \Rmax(\bpi)$, and vice versa.

For the final equalities, \eqref{eq:SP}, \eqref{eq:SQ}, and \eqref{eq:SR},
take any $i \in X(T)=\barred(\bpi)$.
If $i \in \Des(T)$, $\alpha_{i-1} > \alpha_i$ and the entry
$i$ is put to the left of the block containing $i-1$; hence $i \in 
P(\bpi)$. If $i \in \Asc(T)$, $\alpha_{i-1} < \alpha_i$ and $i$
is put in a block to the right of the block containing $i-1$; hence
$i \in Q(\bpi)$. Finally,
if $i \in\Rep(T)$, $\alpha_{i-1}=\alpha_i$ and $i$ is put
immediately after $i-1$ and in the same block; hence $i \in \adjAsc(\bpi)$.
\end{proof}

\begin{example}
We found in the previous example that $\bpi = \phi(T) =
7\ 3\ \bar{5}\ \bar{6}\ 4\ 1\ \bar{8}\ 9\ 2$.
Clearly,
$\Rmin(\bpi) = \{1, 2\} = \Max(T)$ and $\Rmax(\bpi) = \{2, 9\} = 
\Rmax(T)$. We also have 
\[
P(\bpi) \cap \barred(\bpi) = \{5\} \cap \{5, 6, 8\} = \{5\} = 
\{3, 5\} \cap \{5, 6, 8\} = \Des(T) \cap X(T),
\]
\[
Q(\bpi) \cap \barred(\bpi) = \{8\} \cap \{5, 6, 8\} = \{8\} = 
\{2, 4, 8, 9\} \cap \{5, 6, 8\} = \Asc(T) \cap X(T),
\]
\[
\adjAsc(\bpi) \cap \barred(\bpi) = \{6, 9\} \cap \{5, 6, 8\} = \{6\} = 
\{6, 7\} \cap \{5, 6, 8\} = \Rep(T) \cap X(T).
\]
\end{example}

\subsection{Hatted permutations}
\noindent
A \emph{silly ascent} of a permutation $\pi\in\calS_n$
is an entry $\pi(j)$ in the single-row notation
of $\pi$ such that $\pi(j-1)<\pi(j)$ or $j=1$, and $\pi(j)<n$.
So the set $\Ascsilly(\pi)$ of silly ascents is obtained from the set
of ordinary ascents
by replacing $n$ by $\pi(1)$ if $n$ is an ascent,
that is, $\Ascsilly(\pi)=(\Asc(\pi)\cup\{\pi(1)\})\setminus\{n\}$.
In particular, the
number of silly ascents always equals the number of ordinary ascents.

In analogy with the refinement $\Asc(\pi)=P(\pi)\cup Q(\pi)\cup \adjAsc(\pi)$
of ascents, we partition the set of silly ascents
into two types
$\Ascsilly(\pi)=\Psilly(\pi)\cup\Qsilly(\pi)$. Here, $\Psilly(\pi)$ is the
set of occurrences of the pattern \patternPsilly, that is, silly
ascents $\pi(j)$
such that $\pi(j)+1$ is to the left of $\pi(j)$ in the single-row notation
of $\pi$. $\Qsilly(\pi)$ is the set of silly ascents $\pi(j)$ such that
$\pi(j)+1$ is to the right of $\pi(j)$, or equivalently, the set of
occurrences of the pattern \patternQsilly in the permutation
$0\oplus\pi$ of $[0,n]$ obtained by inserting a zero at the very beginning
of the single-row notation of $\pi$.

\begin{defi}
A \emph{hatted permutation} $\hpi$ is a permutation
where zero or more silly ascents are marked by a hat.
The set of hatted ascents is denoted by
$\hatted(\hpi)\subseteq\Ascsilly(\hpi)$.

The set of hatted permutations is denoted by $\hcalS$.
\end{defi}

The \emph{direct sum} $\hpi\oplus\hpi'$ of two hatted permutations
is defined exactly as for barred permutations, and the number of
irreducible hatted components of $\hpi$ is denoted by
$\bcomp(\hpi)$.
Note that $\bcomp(\hpi)$ is not necessarily equal to $\comp(\hpi)$.
Example: $\bcomp(21\hat{3}574\hat{6})=
\bcomp(21\oplus \hat{1}352\hat{4})=2$ while
$\comp(2135746)=3$.

Let $\hattedplus(\hpi)=\hatted(\hpi)+1
=\{i+1\colon i\in\hatted(\hpi)\}$ and define
$\Psillyplus(\hpi)=\Psilly+1$ and
$\Qsillyplus(\hpi)=\Qsilly+1$ analogously.

We now define a map $\phisilly: \flatcolstrictfillings \rightarrow \hcalS$
from flat column-strict fillings to hatted permutations.
For a flat column-strict filling $T$, this
map iterates through the sequence $\alpha = \alpha(T)$ to create a
hatted permutation $\hpi$.

Start with the single block $\block{0}$. Now, for $i = 1, 2, \dotsc, 
n(T)$, if $i+1\in X(T)$ (i.e.~$\lambda(T)_i=\lambda(T)_{i+1}$)
insert $\hat{i}$ at the right end
of the $\alpha_i$th block from the left. If $i+1\notin X(T)$, insert the
new block $\block{i}$ immediately after the $\alpha_i$th block from
the left. Having reached $n(T)$, we dissolve the block structure and remove
the leading zero to obtain (the single-row notation of) $\hpi$.

\begin{example}
Using $\phisilly$, let us map the flat column-strict filling $T$ with shape
$\lambda(T) = (1, 2, 3, 4, 4, 4, 5, 5, 6)$ and
$\alpha = \alpha(T) = (1, 2, 1, 2, 1, 1, 1, 4, 5)$ into
$\hcalS$. Recall that the lazy set of $\lambda(T)$ is
$X(T) = \{5, 6, 8\}$. The block sequences obtained in each step are 
\[
\begin{split}
& \block{0} \\
& \block{0} \block{1} \\
& \block{0} \block{1} \block{2} \\
& \block{0} \block{3} \block{1} \block{2} \\
& \block{0} \block{3\ \hat{4}} \block{1} \block{2} \\
& \block{0\ \hat{5}} \block{3\ \hat{4}} \block{1} \block{2} \\
& \block{0\ \hat{5}} \block{6} \block{3\ \hat{4}} \block{1} \block{2} \\
& \block{0\ \hat{5}\ \hat{7}} \block{6} \block{3\ \hat{4}} \block{1} \block{2} \\
& \block{0\ \hat{5}\ \hat{7}} \block{6} \block{3\ \hat{4}} \block{1} \block{8} \block{2} \\
& \block{0\ \hat{5}\ \hat{7}} \block{6} \block{3\ \hat{4}} \block{1} \block{8} \block{9} \block{2}
\end{split}
\]
Removing the block sequence and the leading zero gives $\hpi = \phi(T) = 
\hat{5}\ \hat{7}\ 6\ 3\ \hat{4}\ 1\ 8\ 9\ 2$.
We note that
$\Psilly(\hpi) = \{4\}$ and $\Qsilly(\hpi) = \{5, 7, 8\}$, and hence
the hatted elements $\hatted(T) = \{4, 5, 7\}$
are a subset of $\Psilly(\hpi) \cup \Qsilly(\hpi) =
\{4, 5, 7, 8\}$. We also note that $3 \in \Qsilly(3\ \hat{4}\ 1\ 2)$,
but when $5$ is added, the pattern is destroyed. Patterns on hatted
elements can, on the other hand, never be destroyed.
\end{example}

There is an inverse $\phisilly^{-1}$: Given a hatted
permutation $\hpi$ we construct a flat column-strict filling $T$ by letting
$\ell(T) = n(\hpi)$ and $X(T) = \hattedplus(\hpi)$ and letting
$\alpha(T)_i$ be one plus the number of entries to the left of $i$ in
the single-row notation of $\pi$ that are less than $i$ and
do not belong to $\hatted(\hpi)$.

\begin{theo}\label{th:Ssillybij}
The map $\phisilly\colon \flatcolstrictfillings \rightarrow \hcalS$
is a bijection. Furthermore, we have
\begin{equation}\label{eq:Ssillydirect}
\phisilly(T\oplus T')=\phisilly(T)\oplus\phisilly(T')
\end{equation}
for any $T,T'\in\flatcolstrictfillings$, and
if $\phisilly(T)=\hpi$ the following equations hold.
\begin{subequations}
\begin{align}
n(T) &= n(\hpi), \label{eq:Ssillyn} \\
\comp(T) &= \bcomp(\hpi), \label{eq:Ssillycomp} \\
\Max(T) &= \Rmin(\hpi), \label{eq:Ssillymax} \\
\Rmax(T) &= \Rmax(\hpi), \label{eq:Ssillyrmax} \\
X(T) &= \hattedplus(\hpi), \label{eq:SsillyX} \\
\Des(T)\cap X(T) &= \Psillyplus(\hpi)\cap \hattedplus(\hpi),
\label{eq:SsillyP} \\
\bigl(\Asc(T)\cup\Rep(T)\bigr)\cap X(T) &= \Qsillyplus(\hpi)\cap \hattedplus(\hpi).
\label{eq:SsillyQ}
\end{align}
\end{subequations}
\end{theo}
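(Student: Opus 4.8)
The plan is to mirror the proof of Theorem~\ref{th:Sbij} as closely as possible, since $\phisilly$ uses the same block-insertion mechanism as $\phi$; the only new difficulties are bookkeeping ones, caused by the leading zero block, the index shift $\hattedplus=\hatted+1$, and the fact that the silly patterns record the position of $\pi(j)+1$ rather than of $\pi(j)-1$.

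First I would check that $\phisilly$ is well-defined. A hatted entry $\hat{i}$ is appended to the right end of some block, so in $\hpi$ it is immediately preceded either by a smaller entry already present in that block --- its representative or an earlier-inserted hat --- or by the discarded zero, in which case it becomes $\hpi(1)$; in all cases it is a silly ascent, and it satisfies $i<n(T)$ because $i+1\in X(T)\subseteq\{2,\dotsc,n(T)\}$. Hence $\phisilly(T)\in\hcalS$. Next I would verify that the stated $\phisilly^{-1}$ really inverts $\phisilly$, exactly as in the barred case: every block contains precisely one entry not lying in $\hatted(\phisilly(T))$, namely its representative, with the convention that the representative of the initial block is the (afterwards discarded) zero; the ``one plus'' in the formula for $\alpha(T)_i$ accounts for that discarded block, and one checks that the prescribed length and lazy set give a genuine flat shape and that the resulting $\alpha(T)_i$ always lies in $\{1,\dotsc,\lambda(T)_i\}$.

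Turning to the identities, \eqref{eq:Ssillyn} and \eqref{eq:SsillyX} are immediate from the construction ($n(\hpi)=\ell(T)=n(T)$, and column $i$ is hatted iff $i+1\in X(T)$), while \eqref{eq:Ssillydirect} is verified by tracking block structures, the one delicate point being that hats appended to the initial zero block when computing $\phisilly(T')$ correspond to hats appended to the rightmost block of the $T$-part when computing $\phisilly(T\oplus T')$; then \eqref{eq:Ssillycomp} follows. For \eqref{eq:Ssillymax} and \eqref{eq:Ssillyrmax} I would argue exactly as for \eqref{eq:Smax} and \eqref{eq:Srmax}: a maximal dot in column $i$ forces $i$ (or $\hat{i}$) to be inserted to the right of every existing block, so nothing smaller ever ends up to its right and $i\in\Rmin(\hpi)$, and conversely; the right-maxima statement is the analogous ``distance from the top of the column'' argument.

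The heart of the matter is \eqref{eq:SsillyP} and \eqref{eq:SsillyQ}, and here the basic invariant is that there are exactly $\lambda(T)_i$ blocks just before column $i$ is processed, which is what makes ``the $\alpha_i$-th block'' always meaningful. Fix $i\in X(T)$, so that $i-1$ is hatted and hence a silly ascent of $\hpi$. When column $i-1$ is processed, $\widehat{i-1}$ is placed at the right end of the $\alpha_{i-1}$-th block; one step later, when column $i$ is processed --- and there are still $\lambda_{i-1}=\lambda_i$ blocks, since $i\in X(T)$ --- the value $i$ is placed immediately after the $\alpha_i$-th block (at its right end if column $i$ is itself hatted, as a new block otherwise), and no later insertion alters the relative order of $i-1$ and $i$. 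Therefore $i$ lies to the left of $i-1$ in $\hpi$ precisely when $\alpha_i<\alpha_{i-1}$, i.e.\ when $i\in\Des(T)$, and $i$ lies to the right of $i-1$ precisely when $\alpha_i\ge\alpha_{i-1}$, i.e.\ when $i\in\Asc(T)\cup\Rep(T)$. By the definitions of $\Psilly$ and $\Qsilly$ this says $i-1\in\Psilly(\hpi)$ in the first case and $i-1\in\Qsilly(\hpi)$ in the second, which after the shift by one are exactly \eqref{eq:SsillyP} and \eqref{eq:SsillyQ}. I expect the only real obstacle to be keeping the bookkeeping --- the zero block, the shift $\hattedplus=\hatted+1$, and the block-count invariant --- scrupulously straight; once that is done the argument is essentially a transcription of the proof of Theorem~\ref{th:Sbij}.
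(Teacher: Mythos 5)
Your proposal is correct and follows essentially the same route as the paper's proof: well-definedness via hatted entries being silly ascents, the block-representative argument for the inverse, the direct consequences for \eqref{eq:Ssillyn}, \eqref{eq:SsillyX}, \eqref{eq:Ssillydirect}, \eqref{eq:Ssillycomp}, the insertion-position argument for \eqref{eq:Ssillymax} and \eqref{eq:Ssillyrmax}, and the case analysis on $\alpha_{i-1}$ versus $\alpha_i$ for \eqref{eq:SsillyP} and \eqref{eq:SsillyQ}. You are in fact somewhat more explicit than the paper (the block-count invariant equal to $\lambda(T)_i$, the zero-block convention, and the check $i<n(T)$), which only strengthens the argument.
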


\begin{proof}
First we must check that $\phisilly$ is well-defined.
Let $0\oplus\hpi$ denote the hatted permutation $\hpi$ preceded by a zero.
Each hatted entry of $0\oplus\hpi$ is preceded by a smaller entry since it is
not the leftmost entry in its block and any greater entries in the block
are inserted to its right. Thus, the hatted entries are silly ascents and
it follows that $\hpi \in \hcalS$.

That $\phisilly^{-1}$ as defined above really is an inverse of
$\phisilly$ follows from the observation that each block
contains exactly one entry that is not in $\hatted(\hpi)$.

Turning to the set of equalities, \eqref{eq:Ssillyn} and~\eqref{eq:SsillyX}
follow directly from the definition of $\phisilly$
and~\eqref{eq:Ssillycomp} is a consequence of~\eqref{eq:Ssillydirect} which
is straightforward to verify. To check~\eqref{eq:Ssillymax} we note that a
maximal dot in column 
$i$ in $T$ will put $i$ to the right of every entry less than
$i$. Hence $i \in \Rmin(\hpi)$. Conversely, if $i \in \Rmin(\hpi)$,
$\phisilly^{-1}$ will put the dot in the topmost cell of column $i$ in
$\phisilly^{-1}(\hpi)$, since $i$ has no smaller entries to its right in
$\hpi$. Equation~\eqref{eq:Ssillyrmax} is checked similarly:
If $i \in \Rmax(T)$, it will not have any larger
entries to its right in $\hpi$, hence $i \in \Rmax(\hpi)$, and vice versa. 

For the final equalities, \eqref{eq:SsillyP} and~\eqref{eq:SsillyQ},
since we know that $X(T) = \hattedplus(\hpi)$ it suffices
to show that for $i \in \hatted(T)$,
$i \in \Des(T)$ if and only if $i-1 \in
\Psilly(\hpi)$. If $i \in \Des(T)$,
$\alpha_{i-1} > \alpha_i$ and $i$ is
put to the left of the block containing $i-1$; hence $i-1 \in 
\Psilly(\hpi)$. If $i \notin \Des(T)$,
$\alpha_{i-1} \leq \alpha_i$ and $i$
is put in the same block as the block containing $i-1$ or to its right;
hence $i-1 \notin \Qsilly(\hpi)$.
\end{proof}

\begin{example}
We saw in the previous example that $\hpi = \phisilly(T) =
\hat{5}\ \hat{7}\ 6\ 3\ \hat{4}\ 1\ 8\ 9\ 2$. Clearly,
$\Rmin(\hpi) = \{1, 2\} = \Max(T)$ and $\Rmax(\hpi) = \{2, 9\} = 
\Rmax(T)$. We also have 
\[
\Psillyplus(\hpi) \cap \hattedplus(\hpi) = \{5\} \cap \{5, 6, 8\}
= \{5\} = 
\{3, 5\} \cap \{5, 6, 8\} = \Des(T) \cap X(T)
\]
and
\begin{multline*}
\Qsillyplus(\hpi) \cap \hattedplus(\hpi) = \{6, 8, 9\} \cap \{5, 6, 8\}
= \{6, 8\} \\
= \{2, 4, 6, 7, 8, 9\} \cap \{5, 6, 8\} =
\bigl(\Asc(T)\cup\Rep(T)\bigr)\cap X(T).
\end{multline*}
\end{example}

\section{Matchings and adjacencies}
\noindent
A {\em matching} of size $n$ is a set partition of $[2n]$ into parts
of size $2$. Each part is called an {\em arc}. For each arc
$A$, we refer to the smaller of its elements as the {\em opener}
$\opener(A)$ and the greater as the {\em closer} $\closer(A)$.
We denote the arcs of a matching $M$ by $M_1, M_2, \dotsc, M_n$ ordered
by closer so that $M_1$ is the arc with the smallest closer.

In the following, we will identify a set of drawn arcs between $2n$
points on a horizontal line with a matching $M$ by assigning the numbers
$1,2,\dotsc,2n$ to the points from left to right, see
Figure~\ref{fig:matching}.

\begin{figure}
\centering
\begin{tikzpicture}[line width=0.7pt, scale=0.4]
  \tikzstyle{disc} = [circle,thin,draw=black, minimum size=1.7pt,
    inner sep=0pt ]
  \draw (0.4, 0) -- (18.6, 0);
  \foreach \x in {1, ..., 18} \draw[fill=black] (\x, 0) circle (3pt);
  \foreach \x in {1, ..., 18} \draw (\x, -1) node{\scriptsize{\x}};
  \foreach \x/\a in {6/1, 9/2, 10/3, 12/4, 13/5, 14/6, 16/7, 17/8, 18/9}
    \draw (\x, -2) node {\scriptsize{\a}};
  \foreach \c/\r in {6/2.5,9/1,10/4,12/2,13/5,14/5,16/5.5,17/3,18/1.5}
    \draw (\c, 0) arc (0:180:\r cm);
\end{tikzpicture}
\caption{The matching 
\[
\left\{ \{1, 6\}, \{7, 9\}, \{2, 10\}, \{8, 12\}, \{3, 13\}, \{4,
14\}, \{5, 16\}, \{11, 17\}, \{15, 18\} \right\}.
\]
} \label{fig:matching} 
\end{figure}

A pair of arcs $\{A,B\}$ is called
\begin{itemize}
\item
a \emph{nesting} if
$\opener(B)<\opener(A)<\closer(A)<\closer(B)$,
\item
a \emph{crossing} if
$\opener(A)<\opener(B)<\closer(A)<\closer(B)$,
\item
a \emph{left adjacency} if $\abs{\opener(A)-\opener(B)}=1$,
\item
a \emph{right adjacency} if $\abs{\closer(A)-\closer(B)}=1$,
\item
a \emph{left (right) nesting (crossing)}
if it is both a left (right) adjacency and a nesting (crossing),
\item
a \emph{double crossing} if it is both a left and a right crossing, and
\item
a \emph{single crossing} if it is either a left or a right crossing.
\end{itemize}

Let us index the right adjacencies $\{M_{i-1},M_i\}$ by $i$,
that is the index of the arc with the rightmost closer, and the left
adjacencies by the index of the arc with
the leftmost opener. The set of (indices of) right and left
adjencencies are denoted by $\Radj(M)$ and $\Ladj(M)$, respectively.
Let $\Rne(M)$ and 
$\Rcr(M)$ denote the sets of (indices of) right nestings
and right crossings, respectively, and correspondingly, let $\Lcr(M)$
denote the sets of left crossings. Further, let
$\LRcr(M)$ denote the set of double crossings, indexed by their right arc.
In other words,
\begin{align*}
\Radj(M) &:=\{i\colon\closer(M_{i-1})=\closer(M_i)-1\}, \\
\Rne(M) &:=
\{i\colon \opener(M_i)<\opener(M_{i-1})<\closer(M_{i-1})=\closer(M_i)-1\} \\
\Rcr(M) &:=
\{i\colon
\opener(M_{i-1})<\opener(M_i)<\closer(M_{i-1})=\closer(M_i)-1\} \\
\Lcr(M) &:=
\{i\colon \exists j\colon\opener(M_i)=\opener(M_j)-1<\closer(M_i)<\closer(M_j)\} \\
\LRcr(M) &:=
\{i\colon \opener(M_{i-1})+1=\opener(M_i)<\closer(M_{i-1})=\closer(M_i)-1\}.
\end{align*}
We also let
$\Rcrproper(M)=\Rcr(M)\setminus\LRcr(M)$ and
$\Lcrproper(M)=\Lcr(M)\setminus(\LRcr(M)-1)$ denote the 
set of right and left single crossings, respectively,
and we let $\rne(M)$, $\rcr(M)$, $\lcr(M)$, $\lrcr(M)$, $\radj(M)$,
and $\ladj(M)$
denote the cardinalities of $\Rne(M)$, $\Rcr(M)$, $\Lcr(M)$, $\LRcr(M)$,
$\Radj(M)$, and $\Ladj(M)$.

Define $\Min(M) := \{i\colon
\opener(M_i) < \closer(M_1)\}$ and
note that the cardinality $\min(M)=\#\Min(M)$ coincides with
$\min(M)$ as defined in \cite{ClaLin2011}.

The set of matchings of size $n$ is denoted by $\calM_n$. We let $\calM =
\bigcup_{n \geq 1} \calM_n$ denote the set of all matchings. Of
special importance is the set $\calN_n$ of matchings with no left nesting
and the set $\calN =
\bigcup_{n \geq 1} \calN_n$ of all such matchings. The size of a
matching $M$ is denoted by $n(M)$.

Define the direct sum $M\oplus M'$ of two
matchings $M,M'\in\calM$ to be the matching obtained by
putting their arc diagrams side by side, $M$ to the left and $M'$
to the right, and then increasing the labels of $M'$ by $2n(M)$.
Figure~\ref{fig:matchingdirectsum} shows an example.
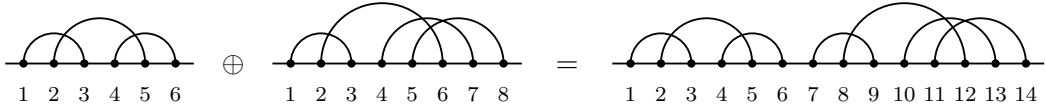
\begin{figure}
\centering
$
\raisebox{-1.32em}{
\begin{tikzpicture}[line width=0.7pt, scale=0.4]
  \tikzstyle{disc} = [circle,thin,draw=black, minimum size=1.7pt,
    inner sep=0pt ]
  \draw (0.4, 0) -- (6.6, 0);
  \foreach \x in {1, ..., 6} \draw[fill=black] (\x, 0) circle (3pt);
  \foreach \x in {1, ..., 6} \draw (\x, -1) node{\scriptsize{\x}};
  \foreach \c/\r in {3/1, 5/1.5, 6/1}
  \draw (\c, 0) arc (0:180:\r cm);
\end{tikzpicture}
}
\ \ \oplus\ \ 
\raisebox{-1.32em}{
  \begin{tikzpicture}[line width=0.7pt, scale=0.4]
   \tikzstyle{disc} = [circle,thin,draw=black, minimum size=1.7pt,
    inner sep=0pt ]
   \draw (0.4, 0) -- (8.6, 0);
   \foreach \x in {1, ..., 8} \draw[fill=black] (\x, 0) circle (3pt);
   \foreach \x in {1, ..., 8} \draw (\x, -1) node{\scriptsize{\x}};
   \foreach \c/\r in {3/1, 6/2, 7/1.5, 8/1.5}
     \draw (\c, 0) arc (0:180:\r cm);
 \end{tikzpicture}
}
\ \ =\ \ 
\raisebox{-1.32em}{
\begin{tikzpicture}[line width=0.7pt, scale=0.4]
  \tikzstyle{disc} = [circle,thin,draw=black, minimum size=1.7pt,
    inner sep=0pt ]
  \draw (0.4, 0) -- (14.6, 0);
  \foreach \x in {1, ..., 14} \draw[fill=black] (\x, 0) circle (3pt);
  \foreach \x in {1, ..., 14} \draw (\x, -1) node{\scriptsize{\x}};
  \foreach \c/\r in {3/1, 5/1.5, 6/1, 9/1, 12/2, 13/1.5, 14/1.5}
    \draw (\c, 0) arc (0:180:\r cm);
\end{tikzpicture}
}
$
\caption{Direct sum of two matchings. 
} \label{fig:matchingdirectsum} 
\end{figure}
Every matching $M$ can be written uniquely as a
direct sum of irreducible matchings, the number of which we
denote by $\comp(M)$.

\begin{example}
We will investigate the matching 
\[
M = \left\{ \{1, 6\}, \{7, 9\}, \{2, 10\}, \{8, 12\}, \{3, 13\}, \{4,
14\}, \{5, 16\}, \{11, 17\}, \{15, 18\} \right\}
\]
(see Figure \ref{fig:matching} for a pictorial presentation). It has
nine arcs, so $n(M) = 9$. The arcs are ordered by closers, indicated
in the bottom row of numbers. We have
%$\Lne(M) = \emptyset, \Lcr(M) = \{1, 2, 3, 4, 5\},
$\Rne(M) = \{3, 5\}$, $\Rcr(M) = \{6, 8, 9\}$,
$\LRcr(M) = \{6\}$, $\Lcr(M) = \{1, 2, 3, 5, 6\}$,
and $\Min(M) = \{1, 3, 5, 6, 7\}$. Note
that $M$ is irreducible, so $\comp(M)=1$.
\end{example}

\begin{defi}
A \emph{marked matching} $\bM$ is a matching without left nestings
where zero or more of the right
adjacencies are marked. The set of (indices of) marked
adjencencies is denoted by $\markedadj(\bM)\subseteq\Radj(\bM)$.

The set of marked matchings is denoted by $\bcalN$. 
\end{defi}

There is a well-known bijection between strict fillings of
shapes of length $n$ and matchings on $[2n]$: Follow the border of the
shape from the south-west corner to the north-east corner and
label the edges $1,2,\dotsc,2n$ as in Figure~\ref{fig:matchingbij}.
Consider this to be a labelling of the rows and columns of the shape.
Now, construct a matching on $[2n]$ by drawing an arc between $i$ and $j$
if there is a dot in the row labelled $i$ and the column labelled $j$.

\begin{figure}
\centering
\begin{tikzpicture}[line width=0.7pt, scale=0.5]
  \tikzstyle{disc} = [circle,thin,draw=black, minimum size=1.7pt,
    inner sep=0pt ]
  \foreach \x/\y in {1/1,2/2,3/3,6/4,7/5} \draw[xshift=14pt,
    yshift=14pt ] (\x,\y) -- (9,\y);
  \foreach \x/\y in {1/1,2/2,3/3,4/4,5/4,6/4,7/5,8/5} \draw[xshift=14pt,
    yshift=14pt ] (\x,\y) -- (\x, 0); 
  \draw[xshift=14pt, yshift=14pt, thick] (0, 0) -- (9, 0) --
  (9, 6) -- (8, 6) -- (8, 5) -- (6, 5) -- (6, 4) -- (3, 4) --
  (3, 3) -- (2, 3) -- (2, 2)
  -- (1, 2) -- (1, 1) -- (0, 1) -- (0, 0);
 % \foreach \x/\y in {4/4,5/4,7/5} \draw[xshift=14pt,
 %   yshift=14pt] (\x cm-0.5cm,\y cm+0.5cm) node{\x}; 
  \foreach \x/\y in {1/1,2/2,3/1,4/2,5/1,6/1,7/1,8/4,9/5}
  \draw[fill=black] (\x,\y) circle (5pt);
\end{tikzpicture}
\hspace{1cm}
\begin{tikzpicture}[line width=0.7pt, scale=0.5]
  \tikzstyle{disc} = [circle,thin,draw=black, minimum size=1.7pt,
    inner sep=0pt ]
  \foreach \x/\y in {1/1,3/2,6/3} \draw[xshift=14pt,
    yshift=14pt ] (\x,\y) -- (9,\y);
  \foreach \x/\y in {1/1,2/2,3/2,4/3,5/3,6/3,7/4,8/4} \draw[xshift=14pt,
    yshift=14pt ] (\x,\y) -- (\x, 0); 
  \draw[xshift=14pt, yshift=14pt, thick] (0, 0) -- (9, 0) --
  (9, 4) -- (6, 4) -- (6, 3) --
  (3, 3) -- (3, 2)  -- (1, 2) -- (1, 1) -- (0, 1) -- (0, 0);
  %\foreach \x/\y in {4/4,5/4,7/5} \draw[xshift=14pt,
  %  yshift=14pt] (\x cm-0.5cm,\y cm+0.5cm) node{\x}; 
  \foreach \x/\y in {1/1,2/2,3/1,4/2,5/1,6/1,7/1,8/3,9/4}
  \draw[fill=black] (\x,\y) circle (5pt);
\end{tikzpicture}
%\raisebox{-5em}[0em][3em]{
\vskip0.5cm
\begin{tikzpicture}[line width=0.7pt, scale=0.5]
  \tikzstyle{disc} = [circle,thin,draw=black, minimum size=1.7pt,
    inner sep=0pt ]
  \foreach \x/\y in {0/1,0/2,0/3,0/4,1/5,1/6,3/7,6/8} \draw[xshift=14pt,
    yshift=14pt] (\x,\y) -- (9,\y);
  \foreach \x/\y in {1/5,2/7,3/7,4/8,5/8,6/8,7/9,8/9} \draw[xshift=14pt,
    yshift=14pt] (\x,\y) -- (\x, 0); 
  \draw[xshift=14pt, yshift=14pt, thick] (0, 0) -- (9, 0) --
  (9, 9) -- (6, 9) -- (6, 8) -- (3, 8) -- (3, 7) -- (1, 7) --
  (1, 5) -- (0, 5) -- (0, 0);
  \foreach \x/\y in {1/1,2/6,3/2,4/7,5/3,6/4,7/5,8/8,9/9}
  \draw[fill=black] (\x,\y) circle (5pt);
  \foreach \n/\x/\y in {1/0.2/1, 2/0.2/2, 3/0.2/3, 4/0.2/4, 5/0.2/5,
    6/0.8/5.8, 7/1.2/6.2, 8/1.2/7, 9/2/7.8, 10/2.8/7.8, 11/3.2/8.2,
    12/4/8.8, 13/5/8.8, 14/5.8/8.8, 15/6.2/9.2, 16/7/9.8, 17/8/9.8, 18/9/9.8}
  \draw[xshift=14pt,
    yshift=14pt] (\x cm-0.5cm,\y cm-0.5cm) node{{\tiny \n}}; 
\end{tikzpicture}
%}
\caption{Upper left: The flat column-strict filling $T$ with
  $\lambda(T) = (1, 2, 3, 4,
  4, 4, 5, 5, 6)$ and $\alpha(T)=(1,2,1,2,1,1,1,4,5)$. Upper right:
$T'$ obtained by removing the empty rows from $T$. Bottom:
The unique strict filling $T''$ such that $\flatten(T'')=T'$.
The corresponding matching $\psi^{-1}(T')$ is the one shown in
Figure~\ref{fig:matching}.\label{fig:matchingbij}
}
\end{figure}
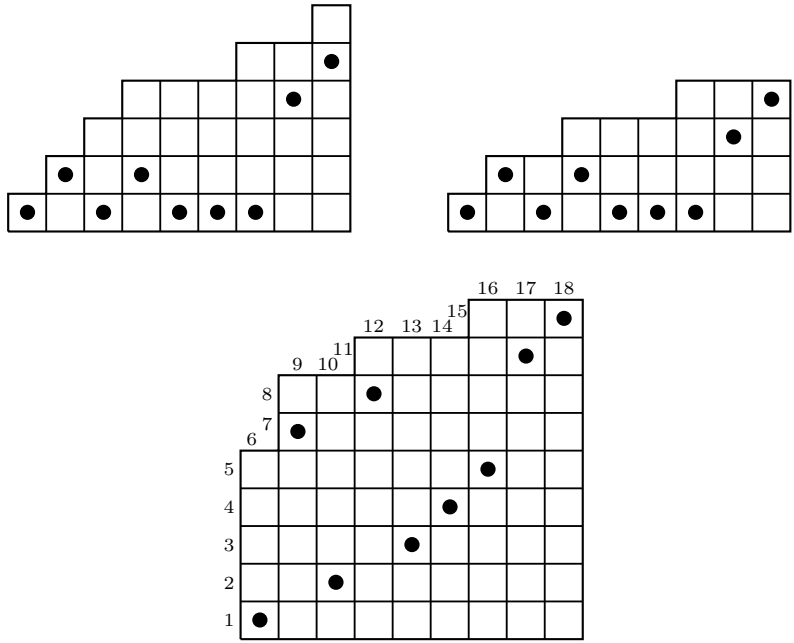

Under this bijection it is easy to see that left adjacencies
in the matching correspond to adjacent rows of
the same length in the
shape, and similarly right adjacencies
correspond to adjacent columns of the same length.
Furthermore, a pair of dots in adjacent rows or columns of the same length
correspond to a (left or right) crossing if they are ascending,
and to a (left or right) nesting if they are descending. (Recall that
a pair of dots are said to be descending (ascending) if one of them
is strictly to the south-west (north-west) of the other one.)
In particular, matchings without left nestings correspond to
strict fillings such that, for each $k$, the dots on rows of length $k$
form an ascending chain. Thus, via the flattening function
$T\mapsto\flatten(T)$
(i.e.~merging rows of equal length) we obtain a bijection $\psi$
from matchings without left nestings to
flat column-strict row-positive fillings,
see Figure~\ref{fig:matchingbij}.

Next we will define a function $f: \flatcolstrictfillings \rightarrow \bcalN$
from flat column-strict fillings to marked matchings.
Given a $T\in\flatcolstrictfillings$, remove
its empty rows to obtain a flat column-strict row-positive filling $T'$.
Let $M=f(T)$ be given by $M:=\psi^{-1}(T')$
together with the marks $\markedadj(M):=X(T)$.
Those marks are legal since
$X(T)\subseteq X(T')=\Radj(M)$.

\begin{theo}\label{th:Nbij}
The function $f\colon\flatcolstrictfillings\rightarrow\bcalN$ is a bijection.
Furthermore, we have
\begin{equation}\label{eq:Ndirect}
f(T\oplus T')=f(T)\oplus f(T')
\end{equation}
for any $T,T'\in\flatcolstrictfillings$, and
if $f(T) = \bM$ the following equations hold.
\begin{subequations}
\begin{align}
n(T) &= n(\bM), \label{eq:Nn} \\
\comp(T) &= \comp(\bM), \label{eq:Ncomp} \\
\Min(T) &= \Min(\bM), \label{eq:Nmin} \\
X(T) &= \markedadj(\bM), \label{eq:NX} \\
\Des(T) \cap X(T) &= \Rne(\bM) \cap \markedadj(\bM), \label{eq:NDes} \\
\Asc(T) \cap X(T) &= \Rcrproper(\bM) \cap\markedadj(\bM), \label{eq:NAsc} \\
\Rep(T) \cap X(T) &= \LRcr(\bM) \cap \markedadj(\bM). \label{eq:NRep}
\end{align}
\end{subequations}
\end{theo}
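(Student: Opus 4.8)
The plan is to mirror the proofs of Theorems~\ref{th:Sbij} and~\ref{th:Ssillybij}: first verify that $f$ and an explicit inverse are well defined, then check the direct-sum identity, and finally dispose of the statistic identities, the last three being the substantial ones. Throughout write $M$ for the underlying matching of $\bM=f(T)$, let $T'$ be the row-positive filling obtained from $T$ by deleting its empty rows, and let $T''$ be the strict filling corresponding to $M$ under the classical filling--matching bijection, so that $\flatten(T'')=\psi(M)=T'$. That $f$ maps into $\bcalN$ is exactly the observation $X(T)\subseteq X(T')=\Radj(M)$ recorded just before the theorem.

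\emph{Bijectivity.} Given a marked matching $\bM$, set $T'=\psi(M)$ and $X=\markedadj(\bM)$, and let $\lambda$ be the flat shape of length $n(M)$ with lazy set $X$; since a flat shape is determined by its length and lazy set this is well defined, and for a flat shape one has $\lambda_i=\#\{j\le i\colon j\notin X\}$. Because $X\subseteq X(T')$ this gives $\lambda(T')_i\le\lambda_i$, so $\lambda(T')\subseteq\lambda$; moreover every row length occurring in $\lambda(T')$ occurs in $\lambda$, and the set of columns spanning a row of a given length is the same in both shapes. Hence the dotted rows of $T'$ can be inserted into $\lambda$ in a unique way (the row lengths of $\lambda$ are distinct, $\lambda$ being flat), with the remaining rows of $\lambda$ left empty; the resulting filling $T$ is flat and column-strict, deleting its empty rows returns $T'$, and $X(T)=X$. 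This recovers $T$ from $\bM$ and is inverse to $f$.

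\emph{Direct sums and the easy identities.} Deleting empty rows, flattening, the classical filling--matching bijection, and the rule $X(T\oplus T')=X(T)\sqcup(\ell(T)+X(T'))$ for lazy sets all respect direct sums, which gives~\eqref{eq:Ndirect}; then~\eqref{eq:Ncomp} follows because a $\oplus$-respecting bijection restricts to a bijection of irreducibles (marks sit inside single irreducible components, the two arcs of a marked adjacency being nested or crossing). Identity~\eqref{eq:Nn} is immediate since $T$ is column-strict, and~\eqref{eq:NX} is the definition of the marks. For~\eqref{eq:Nmin}, note that the shortest column of $T$ has height $1$, so row $1$ of $T$ is non-empty and $\Min(T)=\Min(T')$; on the matching side $\closer(M_1)=1+\lambda(T'')_1$, and $\opener(M_i)<\closer(M_1)$ holds exactly when the dot of column $i$ in $T''$ lies in one of the bottom $\lambda(T'')_1$ rows, which all have length $n$ and hence collapse to row $1$ under flattening, i.e.\ exactly when column $i$ carries its dot in row $1$ of $T'$.

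\emph{The adjacency identities.} Fix $i\in X(T)=\markedadj(\bM)$. Columns $i-1$ and $i$ have equal height in $T$, hence in $T'$, so $i\in\Radj(M)$ and $\{M_{i-1},M_i\}$ is a right adjacency --- necessarily a right nesting or a right crossing. Deleting empty rows is strictly increasing on the (non-empty) rows carrying these two dots, so the relative order of $\alpha(T)_{i-1}$ and $\alpha(T)_i$ agrees with that of $\alpha(T')_{i-1}$ and $\alpha(T')_i$; and since columns of $T''$ are indexed by closer ($\closer(M_c)=c+\lambda(T'')_c$ increases in $c$) while $\opener(M_c)$ is strictly increasing in the row of column $c$'s dot in $T''$, it remains only to locate the two dots inside $T''$. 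If $\alpha(T')_{i-1}=\alpha(T')_i$, equivalently $i\in\Rep(T)$, the two dots occupy adjacent columns of a common row of $T'$, hence adjacent equal-length rows of $T''$, and this forces $\opener(M_{i-1})+1=\opener(M_i)$; since the arcs then cross, $\{M_{i-1},M_i\}$ is a double crossing, i.e.\ $i\in\LRcr(M)$ --- and conversely $\opener(M_{i-1})+1=\opener(M_i)$ forces those two dots into adjacent equal-length rows of $T''$ and hence into a common row of $T'$, giving~\eqref{eq:NRep}. If $\alpha(T')_{i-1}\neq\alpha(T')_i$, the two dots lie in different rows of $T'$ and so in different row-blocks of $T''$, whence their vertical order in $T''$ matches their order in $T'$; then $\alpha(T)_{i-1}>\alpha(T)_i$ puts $\opener(M_{i-1})$ left of $\opener(M_i)$, making $\{M_{i-1},M_i\}$ a right nesting (this is~\eqref{eq:NDes}), while $\alpha(T)_{i-1}<\alpha(T)_i$ makes it a right crossing that is not double, i.e.\ $i\in\Rcrproper(M)$ (this is~\eqref{eq:NAsc}).

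\emph{Main obstacle.} The equivalences $\Des(T)\leftrightarrow\Rne(M)$ and $\Asc(T)\leftrightarrow\Rcr(M)$ on the marked indices are immediate from the ascending/descending-versus-crossing/nesting dictionary for the classical bijection. The genuine work is twofold: (i) showing that the stretching of $T'$ back up into the flat shape $\lambda$ in the inverse map is uniquely forced (this is the content of the row-length matching argument above); and (ii) pinning down that the ``repetition'' case produces a \emph{double} crossing, for which one must see that two dots sharing a row of $T'$ end up in adjacent equal-length rows of $T''$ and hence force the two arc openers to be adjacent. Everything else is routine bookkeeping transferred through the maps $T\mapsto T'\mapsto T''\mapsto M$.
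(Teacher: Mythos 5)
Your proposal is correct and follows essentially the same route as the paper: establish that $\psi$ together with the unique insertion of empty rows determined by the marked set gives the bijection, note compatibility with direct sums, and read off the statistics through the classical filling--matching dictionary (ascending/descending pairs in equal-length adjacent rows or columns versus crossings/nestings). The paper dismisses the statistic identities \eqref{eq:Nmin}--\eqref{eq:NRep} as ``matters of inspection''; your write-up simply carries out that inspection in detail (including the adjacent-equal-length-rows argument identifying repetitions with double crossings), which is consistent with the paper's intent.
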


\begin{proof}
We know from above that $\psi$ is a bijection and it is easy to see that
for any subset $X$ of $X(T')$ there is a unique way of inserting empty
rows into $T'$ to obtain a flat column-strict filling $T$ with $X(T)=X$.
This shows that $f$ is a bijection.

Turning to the set of equalities, \eqref{eq:Nn} and~\eqref{eq:NX}
follow directly from the definition of $f$, and~\eqref{eq:Ncomp} is a
consequence of~\eqref{eq:Ndirect} which is straightforward to verify.
The remaining equations, \eqref{eq:Nmin},~\eqref{eq:NDes},~\eqref{eq:NAsc},
and~\eqref{eq:NRep} are also just matters of inspection.
\end{proof}

\begin{example}
Let $T$ be the flat column-strict filling from our previous examples.
We have previously seen that $X(T) = \{5, 6, 8\}$ and for $\bM =
f(T)$, $\markedadj(\bM)$ is the same set. We have $\Min(\bM) =
\{1, 3, 5, 6, 7\} = \Min(T)$. With $\Rne(\bM) = \{3, 5\}$,
$\Rcr(\bM) = \{6, 8, 9\}$, and $\LRcr(\bM)=\{6\}$ we find that 
\[
\Rne(\bM) \cap \markedadj(\bM) = \{3, 5\} \cap \{5, 6, 8\} = \{5\} 
= \{3, 5\} \cap \{5, 6, 8\} = \Des(T) \cap X(T),
\]
\[
\Rcrproper(\bM) \cap \markedadj(\bM)
= \{8, 9\} \cap \{5, 6, 8\} = \{8\} =  
\{2, 4, 8, 9\} \cap \{5, 6, 8\} = \Asc(T) \cap X(T),
\]
\[
\LRcr(\bM) \cap \markedadj(\bM) = \{6\} \cap \{5, 6, 8\} = \{6\} 
= \{6, 7\} \cap \{5, 6, 8\} = \Rep(T) \cap X(T).
\]
\end{example}

Equations~\eqref{eq:SP}, \eqref{eq:SQ}, \eqref{eq:SR},
\eqref{eq:SsillyP}, \eqref{eq:SsillyQ},
\eqref{eq:NDes}, \eqref{eq:NAsc}, and~\eqref{eq:NRep} all include
an intersection with $X(T)$.
Looking back at the examples, we note that without taking this intersection
the equations would not hold.
In the following sections we will use inclusion-exclusion to prove that
while the bijections above seem to indicate the opposite,
there are other bijections that allow the restriction to $X(T)$ to
be dropped.

\section{Restriction to right nestings}
\label{sc: conjecture 20}
\noindent

Later on, in Section~\ref{sc: generating functions}, we will
take full advantage of the bijections $\phi$, $\phisilly$, and $f$
from the previous sections. As a warm-up, in this section we will
confine ourselves to hatted permutations and matchings and we will
forget about crossings and consider only right nestings. In the end
we will obtain a proof of Conjecture~20 in~\cite{ClaLin2011}.

Define the following three sets.
\begin{align*}
\flatcolstrictfillingsrestr
&:= \{T\in\flatcolstrictfillings\colon\Des(T)\supseteq X(T)\}
\subset\flatcolstrictfillings, \\
%\bcalSrestr &:=
%\{\bpi\in\bcalS\colon P(\bpi)\supseteq \barred(\bpi)\}
%\subset\bcalS, \\
\hcalSrestr &:=
\{\hpi\in\hcalS\colon \Psilly(\hpi)\supseteq \hatted(\hpi)\}
\subset\hcalS, \\
\bcalNrestr &:= \{\bM\in\bcalN\colon\Rne(\bM)\supseteq\markedadj(\bM)\}
\subset\bcalN.
\end{align*}
In other words, $\flatcolstrictfillingsrestr$ is the set of flat column-strict
fillings such that, for any $k$, the dots in columns of length $k$ form
a descending chain;
$\hcalSrestr$ is the set of
hatted permutations where only occurrences
of \patternPsilly are hatted;
and $\bcalNrestr$ is the set of marked matchings
where only right nestings are marked.

\begin{lemma}\label{lm:restrbij}
The maps $\phisilly$ and $f$ from
Theorems~\ref{th:Ssillybij} and~\ref{th:Nbij}
restricted to
$\flatcolstrictfillingsrestr$ are bijections
$\flatcolstrictfillingsrestr\rightarrow\hcalSrestr$
and $\flatcolstrictfillingsrestr\rightarrow\bcalNrestr$, respectively.
Moreover, the equations
$\comp(\phisilly(T))=\comp(f(T))=\comp(T)$ hold for any
$T\in\flatcolstrictfillingsrestr$.
\end{lemma}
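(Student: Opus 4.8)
The plan is to deduce Lemma~\ref{lm:restrbij} from Theorems~\ref{th:Ssillybij} and~\ref{th:Nbij} by checking that the restrictions of $\phisilly$ and $f$ to $\flatcolstrictfillingsrestr$ land precisely in $\hcalSrestr$ and $\bcalNrestr$ respectively, and that nothing outside $\flatcolstrictfillingsrestr$ is mapped there. Since $\phisilly$ and $f$ are already known to be bijections on all of $\flatcolstrictfillings$, it suffices to prove the set-theoretic identities $\phisilly(\flatcolstrictfillingsrestr)=\hcalSrestr$ and $f(\flatcolstrictfillingsrestr)=\bcalNrestr$; the restricted maps are then automatically bijections, and the preservation of $\comp$ is inherited verbatim from~\eqref{eq:Ssillycomp} and~\eqref{eq:Ncomp} (noting that for these two maps $\bcomp$ and $\comp$ agree on the relevant images, or simply invoking the cited equations directly).

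First I would handle $f$. Fix $T\in\flatcolstrictfillings$ and set $\bM=f(T)$. By~\eqref{eq:NX} we have $\markedadj(\bM)=X(T)$, and by~\eqref{eq:NDes},~\eqref{eq:NAsc},~\eqref{eq:NRep} together with the partition $X(T)=(\Des(T)\cap X(T))\sqcup(\Asc(T)\cap X(T))\sqcup(\Rep(T)\cap X(T))$ and the partition $\Radj(\bM)\supseteq\Rne(\bM)\sqcup\Rcrproper(\bM)\sqcup\LRcr(\bM)$, the condition $\Des(T)\supseteq X(T)$ is equivalent to $\Asc(T)\cap X(T)=\Rep(T)\cap X(T)=\emptyset$, which by the three equations is equivalent to $\Rcrproper(\bM)\cap\markedadj(\bM)=\LRcr(\bM)\cap\markedadj(\bM)=\emptyset$, i.e.\ to $\markedadj(\bM)\subseteq\Rne(\bM)$, which is exactly $\bM\in\bcalNrestr$. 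Hence $T\in\flatcolstrictfillingsrestr\iff f(T)\in\bcalNrestr$, giving $f(\flatcolstrictfillingsrestr)=\bcalNrestr$ since $f$ is a bijection on the full sets.

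The argument for $\phisilly$ is the same in spirit but slightly more delicate because of the index shift. With $\hpi=\phisilly(T)$ we have $X(T)=\hattedplus(\hpi)$ by~\eqref{eq:SsillyX}, and~\eqref{eq:SsillyP},~\eqref{eq:SsillyQ} read $\Des(T)\cap X(T)=\Psillyplus(\hpi)\cap\hattedplus(\hpi)$ and $(\Asc(T)\cup\Rep(T))\cap X(T)=\Qsillyplus(\hpi)\cap\hattedplus(\hpi)$. Since $\Des(T)$, $\Asc(T)$, $\Rep(T)$ partition $\{2,\dots,n(T)\}\supseteq X(T)$, and since $\Psilly(\hpi)$, $\Qsilly(\hpi)$ partition $\Ascsilly(\hpi)\supseteq\hatted(\hpi)$ (so after adding one, $\Psillyplus$ and $\Qsillyplus$ partition a set containing $\hattedplus(\hpi)$), the condition $\Des(T)\supseteq X(T)$ is equivalent to $(\Asc(T)\cup\Rep(T))\cap X(T)=\emptyset$, equivalently $\Qsillyplus(\hpi)\cap\hattedplus(\hpi)=\emptyset$, equivalently (shifting down by one) $\Qsilly(\hpi)\cap\hatted(\hpi)=\emptyset$, equivalently $\hatted(\hpi)\subseteq\Psilly(\hpi)$, which is precisely $\hpi\in\hcalSrestr$. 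Thus $\phisilly$ restricts to a bijection $\flatcolstrictfillingsrestr\to\hcalSrestr$.

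The only genuine subtlety—and the step I would be most careful about—is making sure the partition/containment bookkeeping is airtight: namely that $\markedadj(\bM)$ really does decompose into its intersections with $\Rne$, $\Rcrproper$, and $\LRcr$ with no leftover (i.e.\ every marked right adjacency is a right nesting, a proper right crossing, or a double crossing, and these are mutually exclusive), and likewise on the permutation side that every hatted silly ascent is a $\Psilly$- or $\Qsilly$-occurrence but not both. Both facts are essentially recorded already (the disjointness of $\Rne$, $\Rcrproper$, $\LRcr$ among right adjacencies, and the stated partition $\Ascsilly=\Psilly\sqcup\Qsilly$), so once these are cited explicitly the equivalences above are purely formal. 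I would therefore present the proof as: (i) recall the three partitions; (ii) run the two-line equivalence chain for $f$; (iii) run the analogous chain for $\phisilly$, flagging the $+1$ shift; (iv) conclude bijectivity from surjectivity-onto-the-right-target plus injectivity inherited from the unrestricted maps; (v) quote~\eqref{eq:Ssillycomp} and~\eqref{eq:Ncomp} for the $\comp$ equalities.
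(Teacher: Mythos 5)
Your proposal is correct and follows essentially the same route as the paper, which simply states that the first assertion is immediate from Theorems~\ref{th:Ssillybij} and~\ref{th:Nbij} and that the second is clear after noting $\bcomp(\hpi)=\comp(\hpi)$ for $\hpi\in\hcalSrestr$; your equivalence chains just make the ``immediate'' part explicit. One small caution: your fallback of ``simply invoking the cited equations directly'' does not suffice for $\phisilly$, since \eqref{eq:Ssillycomp} only gives $\comp(T)=\bcomp(\hpi)$, so the observation that $\bcomp$ and $\comp$ agree on $\hcalSrestr$ (no hatted element can be a component maximum, because its successor lies to its left) is genuinely needed, exactly as in the paper.
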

\begin{proof}
The first assertion follows immediately from Theorems~\ref{th:Ssillybij}
and~\ref{th:Nbij}, and the second assertion is clear after noting that
$\bcomp(\hpi)=\comp(\hpi)$ for any $\hpi\in\hcalSrestr$.
\end{proof}

Define
\begin{align*}
%\bSrestr(x,t,s,r) &:=
%\sum_{n=1}^\infty t^n \sum_{\pi\in\calS_n}
%r^{\comp(\pi)}s^{\rmax(\pi)}x^{p(\pi)}, \\
\bSsillyrestr(x,t,s,r) &:=
\sum_{n=1}^\infty t^n \sum_{\pi\in\calS_n}
r^{\comp(\pi)}s^{\rmax(\pi)}x^{\psilly(\pi)}, \\
\bNrestr(x,t,s,r) &:=
\sum_{n=1}^\infty t^n\sum_{M\in\calN_n}
r^{\comp(M)}s^{\min(M)}x^{\rne(M)}.
\end{align*}
Our goal in this section is to show that these generating functions
are equal and to obtain an expression for them.

%Let us first do the same type of variable substitution as we did
%in~\eqref{eq:Ssubst}, \eqref{eq:Ssillysubst}, and~\eqref{eq:Nsubst},
%namely we define
Let us first make a variable substitution, and define
\begin{align}
%\Srestr(u,t,s,r) &:= \bSrestr(u+1,t,s,r), \label{eq:Srestrsubst} \\
\Ssillyrestr(u,t,s,r) &:= \bSsillyrestr(u+1,t,s,r),
\label{eq:Ssillyrestrsubst} \\
\Nrestr(u,t,s,r) &:= \bNrestr(u+1,t,s,r). \label{eq:Nrestrsubst}
\end{align}

\begin{prop}
The following identities hold.
\begin{align*}
%\Srestr(u,t,s,r) &= \sum_{\bpi\in\bcalSrestr}
%t^{n(\bpi)}s^{\rmax(\bpi)}r^{\comp(\bpi)}u^{\#\barred(\bpi)}, \\
\Ssillyrestr(u,t,s,r) &= \sum_{\hpi\in\hcalSrestr}
t^{n(\hpi)}s^{\rmax(\hpi)}r^{\comp(\hpi)}u^{\#\hatted(\hpi)}, \\
\Nrestr(u,t,s,r) &= \sum_{\bM\in\bcalNrestr}
t^{n(\bM)}s^{\min(\bM)}r^{\comp(\bM)}u^{\#\markedadj(\bM)}
\end{align*}
\end{prop}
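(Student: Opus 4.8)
The plan is to prove both identities by the same elementary reindexing of a power series: make the substitution $x=u+1$, expand binomially, and recognize each resulting monomial as the contribution of a marked matching (respectively, a hatted permutation) lying in the restricted class. Nothing from the bijections $\phisilly$, $f$ or from Lemma~\ref{lm:restrbij} is needed here; the statement is purely a bookkeeping identity, and the only real work is matching up the definitions.

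First I would treat the matching identity. Starting from~\eqref{eq:Nrestrsubst} we have
\[
\Nrestr(u,t,s,r)=\sum_{n=1}^{\infty}t^{n}\sum_{M\in\calN_{n}}r^{\comp(M)}s^{\min(M)}(u+1)^{\rne(M)}.
\]
Writing $(u+1)^{\rne(M)}=\sum_{Y\subseteq\Rne(M)}u^{\#Y}$ and interchanging the order of summation turns the right-hand side into a sum over pairs $(M,Y)$ with $M\in\calN$ and $Y\subseteq\Rne(M)$. Since a right nesting is in particular a right adjacency, $\Rne(M)\subseteq\Radj(M)$, so the datum of such a pair is exactly the datum of a marked matching $\bM\in\bcalN$ whose marked adjacencies satisfy $\markedadj(\bM)=Y\subseteq\Rne(\bM)$, i.e.\ $\bM\in\bcalNrestr$; conversely, forgetting the marks of a $\bM\in\bcalNrestr$ and recording $\markedadj(\bM)$ recovers the pair. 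This correspondence is a bijection, and it leaves the underlying matching untouched, so $n(\bM)=n(M)$, $\min(\bM)=\min(M)$ and $\comp(\bM)=\comp(M)$, while $\#\markedadj(\bM)=\#Y$; substituting these in gives the asserted formula for $\Nrestr$.

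Then I would repeat the argument almost verbatim for permutations, now using~\eqref{eq:Ssillyrestrsubst}, the expansion $(u+1)^{\psilly(\pi)}=\sum_{Y\subseteq\Psilly(\pi)}u^{\#Y}$, and the containment $\Psilly(\pi)\subseteq\Ascsilly(\pi)$ (which holds because $\Ascsilly(\pi)=\Psilly(\pi)\cup\Qsilly(\pi)$). A pair $(\pi,Y)$ with $Y\subseteq\Psilly(\pi)$ is precisely a hatted permutation $\hpi$ with $\hatted(\hpi)=Y\subseteq\Psilly(\hpi)$, i.e.\ an element of $\hcalSrestr$, and since $n$, $\rmax$ and $\comp$ see only the underlying permutation we get $n(\hpi)=n(\pi)$, $\rmax(\hpi)=\rmax(\pi)$, $\comp(\hpi)=\comp(\pi)$, and $\#\hatted(\hpi)=\#Y$, which yields the formula for $\Ssillyrestr$.

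I do not anticipate a genuine obstacle; the subtle points are merely (i) checking that marking an arbitrary subset of $\Rne(M)$ (hatting an arbitrary subset of $\Psilly(\pi)$) really produces a legal element of $\bcalN$ ($\hcalS$), which is exactly the inclusions $\Rne\subseteq\Radj$ and $\Psilly\subseteq\Ascsilly$ noted above, and (ii) observing that the exponent of $r$ on the right-hand side is $\comp$ — the number of components ignoring marks and hats — and not $\bcomp$, so that no merging of components by marks or hats is involved and the two sides match term by term. (For matchings one can note additionally that a right adjacency never straddles the boundary of two summands of a direct sum, so marks cannot merge matching components in the first place.)
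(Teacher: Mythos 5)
Your proposal is correct and is essentially the paper's own argument: the paper invokes the sieve principle, observing that an element of $\hcalSrestr$ (resp.\ $\bcalNrestr$) is just a permutation (resp.\ matching) together with an arbitrary subset of $\Psilly$ (resp.\ $\Rne$), which is exactly your binomial expansion of $(u+1)^{\psilly(\pi)}$ and $(u+1)^{\rne(M)}$ followed by reindexing. Your explicit checks — that $\Psilly\subseteq\Ascsilly$ and $\Rne\subseteq\Radj$ make the hatted/marked objects legal, and that $n$, $\rmax$, $\min$, $\comp$ depend only on the underlying object — are the details the paper leaves implicit.
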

\begin{proof}
Since an element $\hpi\in\hcalSrestr$ is essentially a permutation
together with a subset $\hatted(\hpi)$ of $\Psilly(\hpi)$, the sieve principle
yields the desired result, see e.g.~\cite{StanleyII}.
The same holds for matchings.
\end{proof}

The steepening function $T\mapsto\steepen(T)$ is clearly a bijection
from $\flatcolstrictfillingsrestr$ to $\staircasecolposfillings$,
the set of column-positive staircase fillings.
Moreover,
it preserves the statistics $n$, $\min$, $\max$, $\rmax$, and $\comp$,
and $\#X(T)=n(\steepen(T))-\ell(\steepen(T))$ for any
$T\in\flatcolstrictfillingsrestr$.
These facts together with Lemma~\ref{lm:restrbij} yield the following theorem.
\begin{theo}\label{th:SINrestr}
The identities
\begin{align*}
%\Srestr(u,t,s,r) &= \Irestr^{\rmax}(u,t,s,r) \\
\Ssillyrestr(u,t,s,r) &= \Irestr^{\rmax}(u,t,s,r), \\
\Nrestr(u,t,s,r) &= \Irestr^{\min}(u,t,s,r)
\end{align*}
hold, where we define the generating functions
\[
\Irestr^\mu(u,t,s,r)
:= \sum_{T \in \staircasecolposfillings}
t^{n(T)}s^{\mu(T)}r^{\comp(T)}
u^{n(T)-\ell(T)}.
\]
for $\mu\in\{\min,\rmax\}$.
\end{theo}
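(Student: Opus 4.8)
The plan is to establish the two identities in Theorem~\ref{th:SINrestr} as a chain of three equalities each, tracking one statistic ($\rmax$ for the permutation side, $\min$ for the matching side) through each link. First I would record the two facts already proved: by Lemma~\ref{lm:restrbij}, $\phisilly$ and $f$ restrict to bijections $\flatcolstrictfillingsrestr\to\hcalSrestr$ and $\flatcolstrictfillingsrestr\to\bcalNrestr$ that preserve $\comp$; and by Theorems~\ref{th:Ssillybij} and~\ref{th:Nbij} (equations~\eqref{eq:Ssillyn}, \eqref{eq:Ssillyrmax}, \eqref{eq:Nn}, \eqref{eq:Nmin}) they also preserve $n$ together with $\rmax$ (resp.\ $\min$), and they carry $\#X(T)$ to $\#\hatted(\hpi)$ (resp.\ $\#\markedadj(\bM)$) via~\eqref{eq:SsillyX} and~\eqref{eq:NX}. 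Combining these with the Proposition just proved, which identifies $\Ssillyrestr$ and $\Nrestr$ with the generating functions over $\hcalSrestr$ and $\bcalNrestr$ weighted by $t^{n}s^{\rmax}r^{\comp}u^{\#\hatted}$ (resp.\ $t^{n}s^{\min}r^{\comp}u^{\#\markedadj}$), I get
\[
\Ssillyrestr(u,t,s,r)=\sum_{T\in\flatcolstrictfillingsrestr}
t^{n(T)}s^{\rmax(T)}r^{\comp(T)}u^{\#X(T)},
\]
and the analogous identity for $\Nrestr$ with $\rmax$ replaced by $\min$. This is the first of the three links, and it is essentially immediate from what precedes.

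The second link is the steepening bijection $\steepen\colon\flatcolstrictfillingsrestr\to\staircasecolposfillings$. I would first verify it is well-defined and bijective: a filling lies in $\flatcolstrictfillingsrestr$ exactly when, for each length $k$, the dots in the columns of length $k$ form a descending chain, so merging those columns into a single column (keeping the dots in their rows) produces a column where the entries are still distinct in the relevant sense — one checks the merged column has at least one dot, hence the result is column-positive, and the distinctness of column lengths in the image is automatic, giving a staircase shape; the inverse splits each column of the staircase into a descending chain of unit-length-incremented columns, which is forced. Then I would check statistic preservation: $n$ is unchanged since no dots are added or removed; $\min$ and $\max$ are unchanged because the bottom row and the topmost cells of columns are not affected by merging columns of equal length (the topmost dot of a merged block sits in the same row as before); $\rmax$ is unchanged because the quantity $\lambda_i-j$ (distance from the top) is preserved for every dot and the relative horizontal order of dots in distinct length-classes is preserved; $\comp$ is preserved because direct-sum decompositions are respected by steepening within each component. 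Finally, $\#X(T)$ counts the pairs of adjacent equal-length columns collapsed by steepening, and since the staircase image has all columns of distinct length, each length-class of size $m$ in $T$ contributes $m-1$ to $\#X(T)$ and collapses to one column, so summing gives $\#X(T)=n(T)-\ell(\steepen(T))=n(\steepen(T))-\ell(\steepen(T))$.

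The third link is just rewriting: applying $\steepen$ to the sum over $\flatcolstrictfillingsrestr$ and substituting the preserved statistics turns it into $\sum_{T\in\staircasecolposfillings}t^{n(T)}s^{\mu(T)}r^{\comp(T)}u^{n(T)-\ell(T)}$, which is exactly $\Irestr^{\mu}(u,t,s,r)$ for $\mu=\rmax$ on the permutation side and $\mu=\min$ on the matching side. Chaining the three equalities gives the theorem. The routine parts are the statistic bookkeeping in links one and three; the part that needs genuine care is the middle link, specifically checking that $\steepen$ really is a bijection onto column-positive staircase fillings and that $\min$, $\max$, and $\rmax$ survive the column-merging — the potential subtlety is that merging columns could a priori create or destroy a dot that is "highest so far from the right" ($\rmax$) or change which column owns a bottom-row dot ($\min$), so I would spell out that because merged columns share a length and are consecutive, and the descending-chain condition makes the dots within a merged block occupy strictly decreasing rows from left to right, none of these incidences is disturbed. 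I expect this verification to be the main (though still modest) obstacle.
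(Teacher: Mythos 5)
Your proposal is correct and follows essentially the same route as the paper: combine the sieve-principle Proposition with the restricted bijections of Lemma~\ref{lm:restrbij} (and the statistics preserved by Theorems~\ref{th:Ssillybij} and~\ref{th:Nbij}) to rewrite $\Ssillyrestr$ and $\Nrestr$ as sums over $\flatcolstrictfillingsrestr$, then transfer via the steepening bijection onto $\staircasecolposfillings$ using $\#X(T)=n(T)-\ell(\steepen(T))$. The only difference is that you spell out the verification of bijectivity of $\steepen$ and the preservation of $n$, $\min$, $\rmax$, $\comp$, which the paper dismisses as clear; your checks are accurate.
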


It remains to show that $\Irestr^{\rmax}(u,t,s,r)=\Irestr^{\min}(u,t,s,r)$
and to compute this generating function.
To show equality we will present an involution $\iota$ on
$\staircasecolposfillings$
taking $\rmax$ to $\min$ while preserving the statistics $n$, $\ell$,
and $\comp$.

An \emph{enriched permutation filling} is a positive filling of a
square shape with the property that each dot is
the leftmost dot in its row if and only if it is the topmost dot in its
column. Let $\enpermfillings$ denote the set of enriched permutations.
(Note that the dot diagram of a permutation $\pi$ (where there is a dot at
$(i,j)$ if $\pi(i)=j$) is indeed an enriched permutation filling.)

Given two enriched permutation fillings $\rho,\rho'\in\enpermfillings$,
we define their \emph{boxed sum} $\rho\boxplus\rho'$ as the
enriched permutation filling obtained by putting $\rho'$ to the
north-east of of $\rho$ and then filling out with empty rectangles
at the north-west and south-east corners in order to get a square shape.
\begin{figure}
\centering
$
\raisebox{-1em}{
\begin{tikzpicture}[line width=0.7pt, scale=0.5]
  \tikzstyle{disc} = [circle,thin,draw=black, minimum size=1.7pt,
    inner sep=0pt ]
  \foreach \x/\y in {0/1,0/2} \draw[xshift=14pt,
    yshift=14pt ] (\x,\y) -- (3,\y);
  \foreach \x/\y in {1/3,2/3} \draw[xshift=14pt,
    yshift=14pt ] (\x,\y) -- (\x, 0); 
  \draw[xshift=14pt, yshift=14pt, thick] (0, 0) -- (3, 0) --
  (3, 3) -- (0, 3) -- (0, 0);
  \foreach \x/\y in {1/2,2/2,2/3,3/1}
  \draw[fill=black] (\x,\y) circle (5pt);
\end{tikzpicture}
}
\ \ \oplus\ \ 
\raisebox{-1em}{
\begin{tikzpicture}[line width=0.7pt, scale=0.5]
  \tikzstyle{disc} = [circle,thin,draw=black, minimum size=1.7pt,
    inner sep=0pt ]
  \foreach \x/\y in {0/1} \draw[xshift=14pt,
    yshift=14pt ] (\x,\y) -- (2,\y);
  \foreach \x/\y in {1/2} \draw[xshift=14pt,
    yshift=14pt ] (\x,\y) -- (\x, 0); 
  \draw[xshift=14pt, yshift=14pt, thick] (0, 0) -- (2, 0) --
  (2, 2) -- (0, 2) -- (0, 0);
  \foreach \x/\y in {1/1,2/1,2/2}
  \draw[fill=black] (\x,\y) circle (5pt);
\end{tikzpicture}
}
\ \ =\ \ 
\raisebox{-1em}{
\begin{tikzpicture}[line width=0.7pt, scale=0.5]
  \tikzstyle{disc} = [circle,thin,draw=black, minimum size=1.7pt,
    inner sep=0pt ]
  \foreach \x/\y in {0/1,0/2,0/3,0/4} \draw[xshift=14pt,
    yshift=14pt ] (\x,\y) -- (5,\y);
  \foreach \x/\y in {1/5,2/5,3/5,4/5} \draw[xshift=14pt,
    yshift=14pt ] (\x,\y) -- (\x, 0); 
  \draw[xshift=14pt, yshift=14pt, thick] (0, 0) -- (5, 0) --
  (5, 5) -- (0, 5) -- (0, 0);
  \foreach \x/\y in {1/2,2/2,2/3,3/1,4/4,5/4,5/5}
  \draw[fill=black] (\x,\y) circle (5pt);
\end{tikzpicture}
}
$
\caption{Boxed sum of enriched permutations.} \label{fig:boxedsum} 
\end{figure}
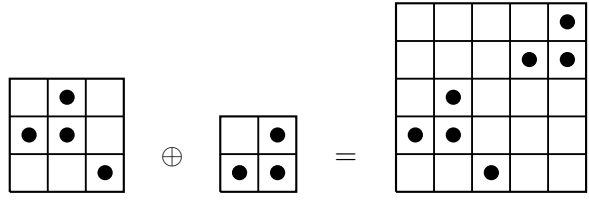
Enriched permutation fillings that cannot be written
as a boxed sum is called \emph{box-irreducible}.
Every enriched permutation $\rho$ can be
written uniquely as
a boxed sum of box-irreducible components, and the number of such
components is denoted by $\boxcomp(\rho)$.

We will define a map $g$ from column-positive staircase fillings
to enriched permutation fillings recursively as follows.
\begin{itemize}
\item
The filling
\raisebox{-0.1em}{
\begin{tikzpicture}[line width=0.7pt, scale=0.3]
  \tikzstyle{disc} = [circle,thin,draw=black, minimum size=1.7pt,
    inner sep=0pt ]
  \draw[xshift=14pt, yshift=14pt, thick] (0, 0) -- (1, 0) --
  (1, 1) -- (0, 1) -- (0, 0);
  \foreach \x/\y in {1/1}
  \draw[fill=black] (\x,\y) circle (5pt);
\end{tikzpicture}
}
is mapped to itself.
\item
If $T\in\staircasecolposfillings$ has more than one column,
remove the rightmost
column from $T$ to obtain $T'\in\staircasecolposfillings$,
and let $\rho'=g(T')$.
Now we obtain $\rho=g(T)$ by inserting an empty row of length $\ell(\rho')$
into $\rho'$ at position $\max\alpha(T)_{\ell(T)}$ and then inserting the
rightmost column of $T$ to the far right.
\end{itemize}

It is easy to see that $g$ is a bijection and the inverse map $g^{-1}$
from enriched permutation fillings to column-positive staircase fillings
is straightforward:
Given an enriched permutation filling $\rho$, delete every cell
in $\rho$ that is to the left of the leftmost dot in its row.
Then, flush the remaining
cells downwards to obtain the column-positive staircase filling $g^{-1}(\rho)$.

The bijection $g$ transfers many useful properties as stated by
the following theorem whose proof is just a matter of straightforward
verification and therefore omitted.
\begin{theo}
The bijection $g\colon\staircasecolposfillings\rightarrow\enpermfillings$
has the property
\begin{equation}\label{eq:boxedsum}
g(T\oplus T')=g(T)\boxplus g(T')
\end{equation}
for any $T,T'\in\staircasecolposfillings$, and
if $\rho=g(T)$ the following equations hold.
\begin{align*}
\ell(T) &= \ell(\rho) \\
n(T) &= n(\rho) \\
\comp(T) &= \boxcomp(\rho) \\
\min(T) &= \lmin(\rho) \\
%\max(T) &= \lmax(\rho) \\
\rmax(T) &= \rmax(\rho)
\end{align*}
\end{theo}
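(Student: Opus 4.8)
The plan is to prove all the assertions by induction on the number of columns of $T$, mirroring the recursive definition of $g$. First I would check that $g$ is well defined, i.e.\ that the filling produced by the recursive step really is an enriched permutation filling. The key point is that passing from $T'$ to $T$ only inserts an empty row and, in the columns $1,\dots,\ell(T)-1$, never adds a dot: the rightmost column of $T$ (of height $\ell(T)$, which is exactly the new side length) is appended verbatim, and the newly inserted empty row sits at height $h=\max\alpha(T)_{\ell(T)}$, which is precisely the height of the topmost dot of the appended column. Hence that topmost dot is alone in its row and is the top of its column; every lower dot of the appended column lands in a row that was already nonempty in $g(T')$, so it is neither leftmost in its row nor topmost in its column; and no dot of the first $\ell(T)-1$ columns changes its leftmost/topmost status, since only an empty row was inserted and the only new dots lie to its right. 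So $g(T)\in\enpermfillings$.

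For the bijectivity I would argue directly with the stated $g^{-1}$. Since every cell strictly to the left of the leftmost dot of a row is empty, $g^{-1}$ deletes only empty cells, so $\Dots(g^{-1}(\rho))=\Dots(\rho)$ as dot sets (each dot only slides downward under the flush). The output is a staircase because in an enriched permutation filling exactly one row has its leftmost dot in any given column $c$ (leftmost-in-row is equivalent to topmost-in-column, and each column has a unique topmost dot), so column $i$ of $g^{-1}(\rho)$ retains exactly $i$ cells; and it is column-positive because the topmost dot of each column is never deleted. That $g^{-1}$ inverts $g$ then follows by induction, matching the recursive step of $g$ with the removal from $\rho$ of its last column together with the row at height $\max\alpha(T)_{\ell(T)}$.

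Turning to the identities: $\ell(T)=\ell(\rho)$ and $n(T)=n(\rho)$ are immediate from the recursive step, since each step adds one column and the same number of dots to both sides. The relation $g(T\oplus T')=g(T)\boxplus g(T')$ I would prove by induction on $\ell(T')$: when the columns of $T'$ are processed after those of $T$, all of their dots lie above the block of $\ell(T)$ empty rows beneath $T'$, so every empty row inserted during this phase goes in above height $\ell(T)$ and never disturbs the $g(T)$-block in the lower left; thus the $T'$-columns get assembled exactly as in $g(T')$, shifted up by $\ell(T)$, which is the definition of $\boxplus$. Since $g$ is a bijection intertwining $\oplus$ with $\boxplus$ it carries irreducibles to box-irreducibles, and the uniqueness of the two decompositions (recalled earlier) gives $\comp(T)=\boxcomp(\rho)$.

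The delicate identities are $\min(T)=\lmin(\rho)$ and $\rmax(T)=\rmax(\rho)$, and I expect the bookkeeping in the latter to be the main obstacle. For $\min/\lmin$, in the step $T'\mapsto T$ the set $\Min$ gains the new column iff $1\in\alpha(T)_{\ell(T)}$; on the other side, among the dots of the appended column exactly the one at height $1$ (if present) lies in $\lmin(\rho)$, while every previously counted dot of $g(T')$ keeps its $\lmin$-status because the newly inserted row and the appended column lie, respectively, empty and to the right of it, and the shift preserves the vertical order of the old dots --- so $\lmin$ increases by the same indicator. For $\rmax$ I would carry the stronger inductive hypothesis that the dot correspondence restricts to a bijection $\Rmax(T)\leftrightarrow\Rmax(\rho)$ that preserves the distance to the top of the column. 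The observation that makes this run is: if a dot of $\rho$ lies in $\Rmax(\rho)$ then every dot strictly to its right is strictly below it, and since $g^{-1}$ deletes from its column exactly the cells lying in rows whose leftmost dot is strictly to the right of that column, all those deleted cells lie strictly below the dot, so its distance to the top is unchanged by $g^{-1}$. In the inductive step the dots of the appended column enter $\Rmax$ vacuously on both sides with matching distances, and a dot of $g(T')$ lying in $\Rmax(g(T'))$ survives into $\Rmax(g(T))$ exactly when its height in $g(T')$ is at least $h=\max\alpha(T)_{\ell(T)}$ --- which is precisely the condition under which the appended column fails to block it in $T$, and also precisely the condition under which the empty-row insertion leaves its distance to the top unchanged. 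The only real work is verifying that the iterated empty-row insertions move the dots relative to one another and to the tops of their columns in just the way needed for this correspondence to persist; everything else is routine.
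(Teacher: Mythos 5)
Your proposal is correct: the paper omits its own proof, declaring the theorem ``just a matter of straightforward verification,'' and your induction along the recursive definition of $g$ is precisely that verification, with the decisive facts correctly identified (the inserted empty row at height $h=\max\alpha(T)_{\ell(T)}$ carries the topmost dot of the appended column, old dots keep their status since the new dots lie strictly to the right, and a dot of $g(T')$ survives in $\Rmax$ exactly when its height is at least $h$, which via the preserved distance-to-top matches the condition $i-j<\ell-h$ on the $T$ side). The only cosmetic point is that your bijectivity discussion verifies $g^{-1}\circ g=\mathrm{id}$ but not explicitly $g\circ g^{-1}=\mathrm{id}$; since the paper establishes bijectivity of $g$ before the theorem and your $\comp=\boxcomp$ argument only needs that granted fact, this does not affect the proof of the stated identities.
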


For any enriched permutation filling $\rho$, define the
\emph{transpose} of $\rho$, denoted by $\rho^T$, to be the enriched permutation
filling obtained by reflecting $\rho$ in the north-west to south-east diagonal.
Clearly, transposition is an involution on $\enpermfillings$ with the
properties that $\lmin(\rho)=\rmax(\rho^T)$ and
$\boxcomp(\rho)=\boxcomp(\rho^T)$.
 
We define the involution $\iota$ on $\staircasecolposfillings$
by $\iota(T)=g^{-1}(g(T)^T)$. Clearly, $\iota$
has the promised properties and we conclude that
\begin{equation}\label{eq:IrmaxequalsImin}
\Irestr^{\rmax}(u,t,s,r)=\Irestr^{\min}(u,t,s,r).
\end{equation}
Figure~\ref{fig:iota} gives an example of the situation.
\begin{figure}
\centering
\begin{tabular}{ccccc}
\tiny
$\begin{array}{ccc}
%\max & = & 2 \\
\min & = & 3 \\
\rmax & = & 4
\end{array}$
& &
\tiny
$\begin{array}{ccc}
%\max & = & 2 \\
\min & = & 3 \\
\rmax & = & 4
\end{array}$
& &
\tiny
$\begin{array}{ccc}
%\lmax & = & 2 \\
\lmin & = & 3 \\
\rmax & = & 4
\end{array}$ \\
\begin{tikzpicture}[line width=0.7pt, scale=0.5]
  \tikzstyle{disc} = [circle,thin,draw=black, minimum size=1.7pt,
    inner sep=0pt ]
  \foreach \x/\y in {1/1,2/2,4/3,5/4} \draw[xshift=14pt,
    yshift=14pt ] (\x,\y) -- (7,\y);
  \foreach \x/\y in {1/1,2/2,3/3,4/3,5/4,6/5} \draw[xshift=14pt,
    yshift=14pt ] (\x,\y) -- (\x, 0); 
  \draw[xshift=14pt, yshift=14pt, thick] (0, 0) -- (7, 0) --
  (7, 5) -- (5, 5) -- (5, 4) -- (4, 4) -- (4, 3) -- (2, 3) --
  (2, 2) -- (1, 2) -- (1, 1) -- (0, 1) -- (0, 0);
  \foreach \x/\y in {1/1,2/1,3/3,4/1,5/3,6/3,7/2}
  \draw[fill=black] (\x,\y) circle (5pt);
\end{tikzpicture}
&
\raisebox{1cm}{$\overset{\steepen{}}{\longmapsto}$}
&
\begin{tikzpicture}[line width=0.7pt, scale=0.5]
  \tikzstyle{disc} = [circle,thin,draw=black, minimum size=1.7pt,
    inner sep=0pt ]
  \foreach \x/\y in {1/1,2/2,3/3,4/4} \draw[xshift=14pt,
    yshift=14pt ] (\x,\y) -- (5,\y);
  \foreach \x/\y in {1/1,2/2,3/3,4/4} \draw[xshift=14pt,
    yshift=14pt ] (\x,\y) -- (\x, 0); 
  \draw[xshift=14pt, yshift=14pt, thick] (0, 0) -- (5, 0) --
  (5, 5) -- (4, 5) -- (4, 4) -- (3, 4) -- (3, 3) -- (2, 3) --
  (2, 2) -- (1, 2) -- (1, 1) -- (0, 1) -- (0, 0);
  \foreach \x/\y in {1/1,2/1,3/1,3/3,4/3,5/2,5/3}
  \draw[fill=black] (\x,\y) circle (5pt);
\end{tikzpicture}
&
\raisebox{1cm}{$\overset{g}{\longmapsto}$}
&
\begin{tikzpicture}[line width=0.7pt, scale=0.5]
  \tikzstyle{disc} = [circle,thin,draw=black, minimum size=1.7pt,
    inner sep=0pt ]
  \foreach \x/\y in {0/1,0/2,0/3,0/4} \draw[xshift=14pt,
    yshift=14pt ] (\x,\y) -- (5,\y);
  \foreach \x/\y in {1/5,2/5,3/5,4/5} \draw[xshift=14pt,
    yshift=14pt ] (\x,\y) -- (\x, 0); 
  \draw[xshift=14pt, yshift=14pt, thick] (0, 0) -- (5, 0) --
  (5, 5) -- (0, 5) -- (0, 0);
  \foreach \x/\y in {1/2,2/1,3/1,3/5,4/4,5/2,5/3}
  \draw[fill=black] (\x,\y) circle (5pt);
\end{tikzpicture}
\\
& & & & \raisebox{0cm}[0.5cm][0.5cm]{$\Big\updownarrow$\footnotesize transposition} \\
\begin{tikzpicture}[line width=0.7pt, scale=0.5]
  \tikzstyle{disc} = [circle,thin,draw=black, minimum size=1.7pt,
    inner sep=0pt ]
  \foreach \x/\y in {1/1,2/2,3/3,5/4} \draw[xshift=14pt,
    yshift=14pt ] (\x,\y) -- (7,\y);
  \foreach \x/\y in {1/1,2/2,3/3,4/4,5/4,6/5} \draw[xshift=14pt,
    yshift=14pt ] (\x,\y) -- (\x, 0); 
  \draw[xshift=14pt, yshift=14pt, thick] (0, 0) -- (7, 0) --
  (7, 5) -- (5, 5) -- (5, 4) -- (3, 4) -- (3, 3) -- (2, 3) --
  (2, 2) -- (1, 2) -- (1, 1) -- (0, 1) -- (0, 0);
  \foreach \x/\y in {1/1,2/1,3/1,4/4,5/1,6/4,7/3}
  \draw[fill=black] (\x,\y) circle (5pt);
\end{tikzpicture}
%\hspace{1cm}
&
\raisebox{1cm}{$\overset{\steepen{}}{\longmapsto}$}
&
\begin{tikzpicture}[line width=0.7pt, scale=0.5]
  \tikzstyle{disc} = [circle,thin,draw=black, minimum size=1.7pt,
    inner sep=0pt ]
  \foreach \x/\y in {1/1,2/2,3/3,4/4} \draw[xshift=14pt,
    yshift=14pt ] (\x,\y) -- (5,\y);
  \foreach \x/\y in {1/1,2/2,3/3,4/4} \draw[xshift=14pt,
    yshift=14pt ] (\x,\y) -- (\x, 0); 
  \draw[xshift=14pt, yshift=14pt, thick] (0, 0) -- (5, 0) --
  (5, 5) -- (4, 5) -- (4, 4) -- (3, 4) -- (3, 3) -- (2, 3) --
  (2, 2) -- (1, 2) -- (1, 1) -- (0, 1) -- (0, 0);
  \foreach \x/\y in {1/1,2/1,3/1,4/1,4/4,5/3,5/4}
  \draw[fill=black] (\x,\y) circle (5pt);
\end{tikzpicture}
%\hspace{1cm}
&
\raisebox{1cm}{$\overset{g}{\longmapsto}$}
&
%\hspace{1cm}
\begin{tikzpicture}[line width=0.7pt, scale=0.5]
  \tikzstyle{disc} = [circle,thin,draw=black, minimum size=1.7pt,
    inner sep=0pt ]
  \foreach \x/\y in {0/1,0/2,0/3,0/4} \draw[xshift=14pt,
    yshift=14pt ] (\x,\y) -- (5,\y);
  \foreach \x/\y in {1/5,2/5,3/5,4/5} \draw[xshift=14pt,
    yshift=14pt ] (\x,\y) -- (\x, 0); 
  \draw[xshift=14pt, yshift=14pt, thick] (0, 0) -- (5, 0) --
  (5, 5) -- (0, 5) -- (0, 0);
  \foreach \x/\y in {1/3,2/2,3/1,4/1,4/5,5/3,5/4}
  \draw[fill=black] (\x,\y) circle (5pt);
\end{tikzpicture} \\
\tiny
$\begin{array}{ccc}
%\max & = & 2 \\
\min & = & 4 \\
\rmax & = & 3
\end{array}$
& &
\tiny
$\begin{array}{ccc}
%\max & = & 2 \\
\min & = & 4 \\
\rmax & = & 3
\end{array}$
& &
\tiny
$\begin{array}{ccc}
%\lmax & = & 2 \\
\lmin & = & 4 \\
\rmax & = & 3
\end{array}$
\end{tabular}
\caption{The fillings to the left belong to $\flatcolstrictfillingsrestr$
and are steepened to the column-positive staircase fillings in the middle.
These in turn are mapped by $g$ to the enriched permutations to the
right which are each others' transposes.} \label{fig:iota}
\end{figure}
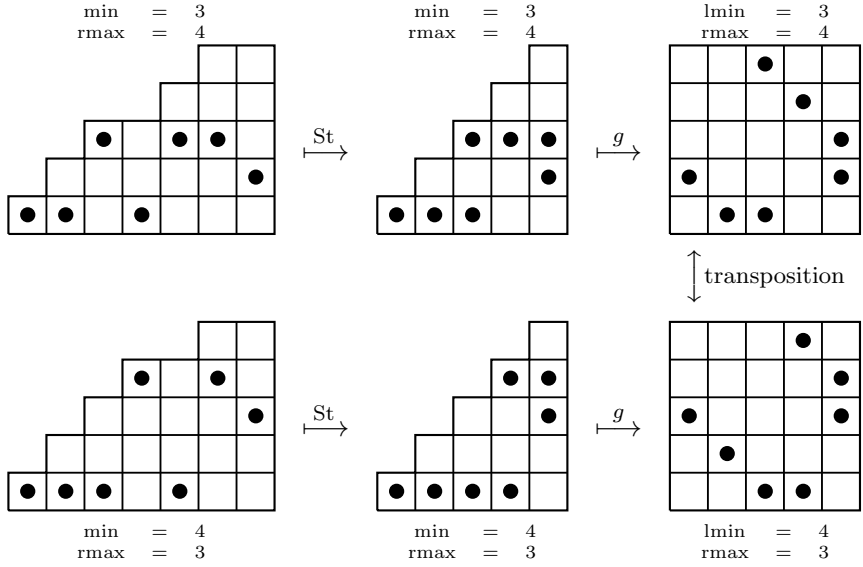

Now, we turn to the task of computing $\Irestr^{\min}(u,t,s,r)$.
Let $\Irestr^{\min}_{\irr}(u,t,s)
=\lim_{r\rightarrow0}r^{-1}\Irestr^{\min}(u,t,s,r)$
be the corresponding generating function that keeps track only of
irreducible fillings.
Since only the first irreducible component of a filling $T$ contributes
to $\min(T)$, we have
\begin{align}
\Irestr^{\min}(u,t,s,r) &=
r\Irestr^{\min}_{\irr}(u,t,s)\bigl(1+\Irestr^{\min}(u,t,1,r)\bigr),
\label{eq:Irestr1} \\
\Irestr^{\min}(u,t,s,r)
&=\frac{r\Irestr^{\min}_{\irr}(u,t,s)}{1-r\Irestr^{\min}_{\irr}(u,t,1)}.
\label{eq:Irestr2}
\end{align}
Plugging $r=1$ into~\eqref{eq:Irestr1} yields
\begin{equation}\label{eq:Iirr}
\Irestr^{\min}_{\irr}(u,t,s)=
\frac{\Irestr^{\min}(u,t,s,1)}{1+\Irestr^{\min}(u,t,1,1)}
=\frac{J(s)}{1+J(1)},
\end{equation}
where we have put $J(s):=\Irestr^{\min}(u,t,s,1)$
for notational convenience.
Now plugging this into~\eqref{eq:Irestr2}, we obtain
\begin{equation}\label{eq:Irestr3}
\Irestr^{\min}(u,t,s,r)
=\frac{r\frac{J(s)}{1+J(1)}}{1-r\frac{J(1)}{1+J(1)}}
=\frac{rJ(s)}{1+(1-r)J(1)}.
\end{equation}
To compute $J(s)=\Irestr^{\min}(u,t,s,1)$ we construct a typical
column-positive staircase filling by first choosing the number $\ell$
of columns and then, for $k=1,2,\dotsc,\ell$, choosing which
cells in the $k$th column should have a dot. Translated to
a generating function, this procedure becomes
\[
J(s)=\Irestr^{\min}(u,t,s,1)
=\sum_{\ell=1}^\infty\prod_{k=1}^\ell
u^{-1}\bigl((1+tu)^{k-1}(1+stu)-1\bigr).
\]
and plugging this into~\eqref{eq:Irestr3} finally yields
\[
\Irestr^{\min}(u,t,s,r)
=\frac{rJ(s)}{1+(1-r)J(1)}
=\frac{r\sum_{m=1}^\infty\prod_{k=1}^m
u^{-1}\bigl((1+tu)^{k-1}(1+stu)-1\bigr)}%
{1+(1-r)\sum_{m=1}^\infty\prod_{k=1}^m
u^{-1}\bigl((1+tu)^k-1\bigr)}.
\]

From~\eqref{eq:IrmaxequalsImin} and
Theorem~\ref{th:SINrestr} we see that substituting $x-1$ for $u$ in the
last equation,
according to \eqref{eq:Ssillyrestrsubst} and
\eqref{eq:Nrestrsubst}, yields the following theorem.

\begin{theo}\label{th:conjtwenty}
\begin{multline*}
%& \sum_{n=1}^\infty t^n \sum_{\pi\in\calS_n}
%r^{\comp(\pi)}s^{\rmax(\pi)}x^{p(\pi)} \\
\sum_{n=1}^\infty t^n \sum_{\pi\in\calS_n}
r^{\comp(\pi)}s^{\rmax(\pi)}x^{\psilly(\pi)}
= \sum_{n=1}^\infty t^n\sum_{M\in\calN_n}
r^{\comp(M)}s^{\min(M)}x^{\rne(M)} \\
= \frac{r\sum_{m=1}^\infty\prod_{k=1}^m
(x-1)^{-1}\bigl((1+t(x-1))^{k-1}(1+st(x-1))-1\bigr)}%
{1+(1-r)\sum_{m=1}^\infty\prod_{k=1}^m
(x-1)^{-1}\bigl((1+t(x-1))^k-1\bigr)}
\end{multline*}
\end{theo}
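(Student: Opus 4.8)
The plan is to assemble the ingredients developed above in this section; no new idea is needed. First I would reduce to the $u$-version of the identity: by the substitutions \eqref{eq:Ssillyrestrsubst} and \eqref{eq:Nrestrsubst} the two sums on the left of the theorem equal $\bSsillyrestr(x,t,s,r)=\Ssillyrestr(x-1,t,s,r)$ and $\bNrestr(x,t,s,r)=\Nrestr(x-1,t,s,r)$, so it suffices to prove that $\Ssillyrestr(u,t,s,r)$ and $\Nrestr(u,t,s,r)$ both equal the rational expression in the last display before the theorem, after which the substitution $u=x-1$ finishes the job.

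Second, I would pass to staircase fillings. Theorem~\ref{th:SINrestr} gives $\Ssillyrestr(u,t,s,r)=\Irestr^{\rmax}(u,t,s,r)$ and $\Nrestr(u,t,s,r)=\Irestr^{\min}(u,t,s,r)$, so the whole problem now lives on $\staircasecolposfillings$. Equality of the two resulting series is \eqref{eq:IrmaxequalsImin}: the involution $\iota(T)=g^{-1}(g(T)^T)$ on $\staircasecolposfillings$ works because $g$ carries $\min$ to $\lmin$ and $\rmax$ to $\rmax$ while fixing $n$, $\ell$ and $\comp$ (equivalently $\boxcomp$), and transposition of enriched permutation fillings interchanges $\lmin$ and $\rmax$ and preserves the remaining three statistics; hence $\iota$ swaps $\min$ and $\rmax$ and leaves $n$, $\ell$, $\comp$ alone, so the two generating functions coincide and both equal $\Irestr^{\min}(u,t,s,r)$.

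Third, I would evaluate $\Irestr^{\min}(u,t,s,r)$ in closed form. Decomposing a staircase column-positive filling into its irreducible components, only the first component reaches the bottom of the first column and hence carries the $s$-weight; this gives the functional equation \eqref{eq:Irestr1}, whence \eqref{eq:Irestr2}, and putting $r=1$ and solving expresses the irreducible series through $J(s):=\Irestr^{\min}(u,t,s,1)$, leading to \eqref{eq:Irestr3}. To obtain $J(s)$ I would build a staircase column by column: the $k$th column has $k$ cells, each of its $k-1$ non-bottom cells contributing a factor $(1+tu)$, its bottom cell contributing $(1+stu)$ because a dot there is counted by $\min$, with a subtracted $1$ to rule out an empty column and an overall $u^{-1}$ to absorb the exponent $n(T)-\ell(T)$ of $u$; summing over the number of columns produces the stated formula for $J(s)$. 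Substituting into \eqref{eq:Irestr3} and then replacing $u$ by $x-1$ yields the right-hand side of the theorem.

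The only step that needs genuine care, rather than bookkeeping of already-proven facts, is this last column-by-column computation of $J(s)$: one must check that the exponent of $s$ it produces is exactly $\min(T)=\#\{i:(i,1)\in\Dots(T)\}$ and not a shifted quantity, and that the ``$-1$'' enforces column-positivity in \emph{every} column --- including the leftmost one --- consistently with the way $\min$ and $\comp$ were used to derive \eqref{eq:Irestr1}. Once that factorization is verified, everything else is a chain of substitutions of identities already established in this section.
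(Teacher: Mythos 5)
Your proposal is correct and follows the paper's own route step for step: the reduction via the substitutions \eqref{eq:Ssillyrestrsubst}, \eqref{eq:Nrestrsubst} and Theorem~\ref{th:SINrestr} to the staircase generating functions $\Irestr^{\rmax}$ and $\Irestr^{\min}$, the involution $\iota=g^{-1}\circ(\cdot)^T\circ g$ giving \eqref{eq:IrmaxequalsImin}, and the first-component/column-by-column computation of $J(s)$ leading to \eqref{eq:Irestr3} and the closed form after setting $u=x-1$. The one point to phrase carefully is that $\min(T)$ counts dots on the bottom \emph{row} (not the first column), which is exactly why only the first irreducible component carries the $s$-weight; with that wording fixed, your argument coincides with the paper's.
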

The first equation of the theorem proves Claesson and Linusson's
Conjecture~20 in \cite{ClaLin2011} after two simple observations:
\begin{itemize}
\item
The pattern $p$ used by Claesson and Linusson is our pattern
$\psilly=$\patternPsillyplain rotated 180 degrees. Hence our $\rmax(\pi)$
corresponds to their $\lmin(\pi)$.
\item
The part of their conjecture stating that $(\rne, \comp, \min)$
is equidistributed on matchings and posets follows from their
Proposition~17 together with the fact that
their bijection $h$ preserves $\rne$.
\end{itemize}

\section{Noncommutative formal power series} \label{sc: NFPS}
\noindent
For the rest of this paper we will continue concerning ourselves with
permutation patterns and matching adjacencies. However,
we will not be interested merely in their
\emph{numbers} but in their \emph{positions} inside the permutations and
matchings as well, so
ordinary generating functions will not suffice anymore.
Instead, our results will
typically be expressed in terms of noncommutative power series,
sometimes mixed up with commutative variables.

For noncommutative variables
we will use bold symbols, like $\bx$ and $\by$, while the commutative
ones will be thin, like $x$ and $y$.
The ring $\formal{\C[[x_1,\dotsc,x_m]]}{\bx_1,\dotsc,\bx_n}$
of formal series over the complex numbers in the variables
$x_1,\dotsc,x_m$ (commutative) and $\bx_1,\dotsc,\bx_n$ (noncommutative)
is an object that requires some care,
so in this section we will define it precisely and discuss some of its
properties. Assuming that the reader is familiar with the
integral domain $\C[[x_1,\dotsc,x_m]]$ of ordinary commutative formal
series in variables $x_1,\dotsc,x_m$, we will define
$\formal{R}{\bx_1,\dotsc,\bx_n}$ for any integral domain $R$.
We will follow Stanley's exposition in~\cite[Sec.~6.5]{StanleyII} closely,
but Stanley deals only with the case where $R$ is a field.

Let $\bX=\{\bx_1,\dotsc,\bx_n\}$ be a set of noncommutative variables
and let $\bX^\ast$ be the free monoid generated by $\bX$. Thus
$\bX^\ast$ consists of all finite strings (including the empty string 1)
of letters in $\bX$.

\begin{defi}
A \emph{formal (power) series} in $\bX$ over an integral domain
$R$ is a function
$S\colon\bX^\ast\rightarrow R$. We write $\inner{S}{w}$ for $S(w)$ and then
write
\[
S=\sum_{w\in\bX^\ast}\inner{S}{w}w.
\]
The set of all formal series in $\bX$ is denoted $\formal{R}{\bX}$.
\end{defi}

We identify $1\in\bX^\ast$ with $1\in R$ and abbreviate the
term $\alpha\cdot 1$ of the above series $S$ as $\alpha$.
To make $\formal{R}{\bX}$ a ring, we define addition and multiplication
in the obvious way:
\begin{align*}
S+T &= \sum_{w\in\bX^\ast}(\inner{S}{w}+\inner{T}{w})w, \\
ST &= \sum_{u,v\in\bX^\ast}\inner{S}{u}\inner{T}{v}uv
=\sum_{w\in\bX^\ast}\left(\sum_{uv=w}\inner{S}{u}\inner{T}{v}\right)w.
\end{align*}

A sequence $S_1,S_2,\dotsc$ of formal series is said to \emph{converge} to a
formal series $S$ if for all $w\in\bX^\ast$ the sequence
$\inner{S_1}{w},\inner{S_2}{w},\dotsc$ has only finitely many
terms unequal to $\inner{S}{w}$.

\begin{theo}
$S$ is invertible in $\formal{R}{\bX}$ if and only if
$\inner{S}{1}$ is invertible in $R$.
\end{theo}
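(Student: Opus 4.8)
This is the standard formal--power--series fact, and I would prove it by the usual route: the two implications are handled separately, with the forward direction an immediate computation with the coefficient of the empty word and the converse a geometric--series argument. The one point that needs genuine care is convergence, and I expect that to be the only real obstacle.

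For the forward direction I would fix an inverse $T$ of $S$, so in particular $ST=1$, and read off the coefficient of the empty word $1\in\bX^\ast$. Since $1$ has the unique factorization $1=1\cdot1$ in the free monoid $\bX^\ast$, the multiplication rule gives $\inner{S}{1}\inner{T}{1}=\inner{1}{1}=1$, and since $R$ is commutative this exhibits $\inner{S}{1}$ as a unit of $R$ with inverse $\inner{T}{1}$.

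For the converse, suppose $a:=\inner{S}{1}$ is a unit of $R$. Elements of $R$ are central in $\formal{R}{\bX}$, so I can factor $S=a(1-N)$ with $N:=1-a^{-1}S$, which has $\inner{N}{1}=0$, and I would propose $U:=\sum_{k\ge0}N^k$ as the inverse of $1-N$. First I must check this sum converges in the sense defined in the paper: using the length grading on $\bX^\ast$ --- multiplication of words never decreases length --- together with $\inner{N}{1}=0$, an induction on the product rule shows $\inner{N^k}{w}=0$ whenever $k>|w|$, because in any length--$k$ product $v_1\cdots v_k=w$ with $|w|<k$ some factor $v_j$ must be the empty word and hence $\inner{N}{v_j}=0$; so for each fixed $w$ only finitely many $N^k$ contribute and $U\in\formal{R}{\bX}$ is well defined. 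Then I would establish $(1-N)U=U(1-N)=1$ by taking limits in the finite telescoping identities $(1-N)\sum_{k=0}^m N^k=1-N^{m+1}=\sum_{k=0}^m N^k(1-N)$, using $N^{m+1}\to0$ and the continuity of left and right multiplication by a fixed series. Finally $T:=Ua^{-1}$ is a two--sided inverse of $S$, since $ST=a(1-N)Ua^{-1}=1$ and $TS=Ua^{-1}a(1-N)=U(1-N)=1$.

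The only steps that are more than formalities are the two convergence claims --- that $\sum_kN^k$ makes sense and that multiplication may be interchanged with that limit --- but both follow from the same elementary fact that word multiplication cannot shorten words while $N$ annihilates the empty word. Continuity of multiplication by a fixed $A$ is itself short: for each $w$ the coefficient $\inner{AS_i}{w}$ is a finite sum over the finitely many factorizations of $w$, and each summand stabilizes once $i$ is large, so $\inner{AS_i}{w}=\inner{AS}{w}$ for all large $i$. It is worth emphasizing that the argument uses only that $a$ is a unit in the commutative ring $R$, never that $R$ is a field, which is exactly why the theorem can be stated in this generality rather than Stanley's.
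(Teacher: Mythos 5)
Your proposal is correct and follows the same route as the paper: the forward direction by comparing coefficients of the empty word, and the converse via the geometric series $\alpha^{-1}\sum_{k\ge0}(1-\alpha^{-1}S)^k$, whose formal convergence the paper asserts without detail and you justify carefully (correctly noting $\inner{N^k}{w}=0$ for $k>|w|$). No gaps; your write-up simply fills in the convergence and telescoping steps the paper leaves to the reader.
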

\begin{proof}
The ``only if'' direction follows from the fact that
$\inner{ST}{1}=\inner{TS}{1}=\inner{S}{1}\inner{T}{1}$.
For the ``if'' direction,
suppose $\inner{S}{1}=\alpha$ is invertible in $R$ and define
\[
T=\alpha^{-1}\sum_{k=0}^\infty(1-\alpha^{-1}S)^k.
\]
This sum converges formally and it is easy to check that $ST=TS=1$.
\end{proof}

It is evident from the definition of addition and multiplication
on $\formal{R}{\bX}$ that $\formal{R}{\bX}$ is a domain,
that is $ST=0$ implies $S=0$ or $T=0$. Thus, for any $S,U\in\formal{R}{\bX}$
with $U\ne0$
there can be at most one $T\in\formal{R}{\bX}$ such that
$S=TU$ and if there is such a $T$ we may write $T=SU^{-1}$ without
ambiguity, even if $U$ is not invertible.

\section{Generating functions} \label{sc: generating functions}
\noindent
In this section we will exploit the bijections $\phi$, $\phisilly$,
and $f$ from Theorems~\ref{th:Sbij}, \ref{th:Ssillybij}, and~\ref{th:Nbij}
in their full splendour, and extract enumerative results in the form of
noncommutative generating functions.

Let $\bx_1,\dotsc,\bx_k,\bt$ be noncommutative variables.
If $n\in\N$ and $A_1,\dotsc,A_k$ are disjoint subsets of $[n]$,
we write
%$\monomthree{\bx_1,\dotsc}{\bx_k}{\bt}{n}{A_1,\dotsc}{A_k}$
$[\bx_1^{A_1}\dotsm\bx_k^{A_k};{\bt}]_{n}$
for the monomial of length $n$ where
the $i$th factor is $\bx_j$ if $i\in A_j$ and $\bt$ if
$i\notin A_1\cup\dotsb\cup A_k$.
For instance,
$\monomthree{\bx}{\by}{\bt}{5}{\{2,4\}}{\{1\}}=\by\bx\bt\bx\bt$.

Let us define three generating functions $\bS$, $\bSsilly$ and $\bN$, where
$\bS$ counts permutations $\pi$ with respect to the sets
$P(\pi)$, $Q(\pi)$, and $\adjAsc(\pi)$, $\bSsilly$ counts permutations $\pi$
with respect to $\Psillyplus(\pi)$ and $\Qsillyplus(\pi)$, and $\bN$ counts
matchings $M$
without left nestings with respect to $\Rne(M)$, $\Rcrproper(M)$,
and $\LRcr(M)$.
\begin{align*}
\bS(\bx,\by,\bz,\bt,s) &:=
\sum_{n=1}^\infty \sum_{\pi\in\calS_n}
s^{\rmin(\pi)}\monomfour{\bx}{\by}{\bz}{\bt}{n}{P(\pi)}{Q(\pi)}{\adjAsc(\pi)}, \\
\bSsilly(\bx,\by,\bt,s) &:=
\sum_{n=1}^\infty \sum_{\pi\in\calS_n}
s^{\rmin(\pi)}
\monomthree{\bx}{\by}{\bt}{n}{\Psillyplus(\pi)}{\Qsillyplus(\pi)}, \\
\bN(\bx,\by,\bz,\bt,s) &:=
\sum_{n=1}^\infty \sum_{M\in\calN_n}
s^{\min(M)}\monomfour{\bx}{\by}{\bz}{\bt}{n}{\Rne(M)}
{\Rcrproper(M)}{\LRcr(M)}.
\end{align*}

Our goal in this section
is to show that $\bS(\bx,\by,\bz,\bt,1)=\bN(\bx,\by,\bz,\bt,1)$
and $\bS(\bx,\by,\by,\bt,s)=\bSsilly(\bx,\by,\bt,s)=\bN(\bx,\by,\by,\bt,s)$
and to obtain expressions for these functions.
To this end we will make extensive use
of the sieve principle,
which takes the following form in terms of noncommutative
generating functions.

\begin{theo}[The sieve principle]
Let $n$ be a positive integer and let
$A_1,\dotsc,A_k$ be disjoint subsets of $[n]$.
Then,
\[
%\monomthree{\bu_1+\bt,\dotsc}{\bu_k+\bt}{\bt}{n}{A_1,\dotsc}{A_k}
[(\bu_1+\bt)^{A_1}\dotsm(\bu_k+\bt)^{A_k};{\bt}]_n
=\sum_{Y\subseteq A_1\cup\dotsb\cup A_k}
%\monomthree{\bu_1,\dotsc}{\bu_k}{\bt}{n}{Y\cap A_1,\dotsc}{Y\cap A_k}.
[\bu_1^{Y\cap A_1}\dotsm\bu_k^{Y\cap A_k};\bt]_n.
\]
\end{theo}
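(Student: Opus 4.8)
The plan is to prove the identity by expanding the left-hand side with the distributive law; nothing beyond bookkeeping is required. Write $A:=A_1\cup\dotsb\cup A_k$. By the definition of the monomial notation, $[(\bu_1+\bt)^{A_1}\dotsm(\bu_k+\bt)^{A_k};\bt]_n$ is the product $w_1w_2\dotsm w_n$ of $n$ factors in which $w_i=\bu_j+\bt$ whenever $i\in A_j$ (this is unambiguous, since the $A_j$ are pairwise disjoint) and $w_i=\bt$ whenever $i\notin A$.

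First I would apply distributivity to write this product as a sum of $2^{\#A}$ words: a term of the expansion is obtained by selecting, for each position $i\in A$, one of the two summands of the factor $w_i=\bu_j+\bt$. Such a selection is recorded by the set $Y\subseteq A$ of positions at which the letter $\bu_j$ (rather than $\bt$) is chosen; conversely every $Y\subseteq A$ arises this way, and for $i\in Y$ the chosen letter is $\bu_j$ for the unique $j$ with $i\in A_j$, i.e.\ the $j$ with $i\in Y\cap A_j$. Consequently the word produced by the selection $Y$ carries $\bu_j$ in each position $i\in Y\cap A_j$ and $\bt$ in every other position---both those in $A\setminus Y$ and those outside $A$---which is exactly $[\bu_1^{Y\cap A_1}\dotsm\bu_k^{Y\cap A_k};\bt]_n$. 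Summing over all selections, that is, over all $Y\subseteq A_1\cup\dotsb\cup A_k$, then gives the right-hand side.

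I do not expect a genuine obstacle here: the statement is a formal consequence of distributivity, and the only point that needs care is the (routine) verification that the correspondence between terms of the expansion and subsets $Y\subseteq A$ is exhaustive, which is where disjointness of the $A_j$ is used. There is no need to check that distinct $Y$ yield distinct words---if, say, two of the $\bu_j$ coincide the corresponding words may well coincide too---because we are merely summing a list of monomials, with no cancellation to account for. If a more pedestrian route is wanted, the same identity follows by induction on $\#A$, splitting off the last factor $\bu_j+\bt$ and separating the sum according to which of $\bu_j$, $\bt$ that factor contributes; but the one-step expansion above is cleaner.
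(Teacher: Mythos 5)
Your proof is correct and is the same argument as the paper's: the paper simply expands each factor of type $(\bu_j+\bt)$ by distributivity and identifies the resulting terms with the subsets $Y$, exactly as you do (only stated in one sentence). Your added bookkeeping about where disjointness is used and why no cancellation matters is fine but not a different route.
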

\begin{proof}
The left-hand side
is a product of factors, some of which are $\bt$ and some of which are
$(\bu_j+\bt)$ for some $j$. Expanding all factors of type $(\bu_j+\bt)$
yields the right-hand side.
\end{proof}

It turns out that it is convenient to make a variable substitution and
work instead with the generating functions
\begin{align}
\label{eq:Ssubst}
S(\bu,\bv,\bw,\bt,s) &:= \bS(\bu+\bv+\bt,\bv+\bt,\bv+\bw+\bt,\bt,s), \\
\label{eq:Ssillysubst}
\Ssilly(\bu,\bv,\bt,s) &:= \bSsilly(\bu+\bv+\bt,\bv+\bt,\bt,s), \\
\label{eq:Nsubst}
N(\bu,\bv,\bw,\bt,s) &:= \bN(\bu+\bv+\bt,\bv+\bt,\bv+\bw+\bt,\bt,s).
\end{align}
Of course, we can reverse the substitution by the formulas
$\bS(\bx,\by,\bz,\bt,s)=S(\bx-\by,\by-\bt,\bz-\by,\bt,s)$,
$\bSsilly(\bx,\by,\bt,s)=\Ssilly(\bx-\by,\by-\bt,\bt,s)$, and
$\bN(\bx,\by,\bz,\bt,s)=N(\bx-\by,\by-\bt,\bz-\by,\bt,s)$ if we want.

\begin{prop}\label{pr:sieveSN}
The following identities hold.
\begin{align*}
S(\bu,\bv,\bw,\bt,s) &=
\sum_{\bpi \in \bcalS}
s^{\rmin(\bpi)}
\monomfour{(\bu+\bv)}{\bv}{(\bv+\bw)}{\bt}{n(\bpi)}%
{\barred(\bpi)\cap P(\bpi)}{\barred(\bpi)\cap Q(\bpi)}%
{\barred(\bpi)\cap R(\bpi)},\\
\Ssilly(\bu,\bv,\bt,s) &=
\sum_{\hpi \in \hcalS}
s^{\rmin(\hpi)}
\monomthree{(\bu+\bv)}{\bv}{\bt}{n(\hpi)}%
{\hattedplus(\hpi)\cap \Psillyplus(\hpi)}%
{\hattedplus(\hpi)\cap \Qsillyplus(\hpi)},\\
N(\bu,\bv,\bw,\bt,s) &=
\sum_{\bM \in \bcalN}
s^{\min(\bM)}
\monomfour{(\bu+\bv)}{\bv}{(\bv+\bw)}{\bt}{n(\bM)}%
{\markedadj(\bM)\cap \Rne(\bM)}%
{\markedadj(\bM)\cap \Rcrproper(\bM)}%
{\markedadj(\bM)\cap \LRcr(\bM)}.
\end{align*}
\end{prop}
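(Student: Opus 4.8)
The plan is to apply the sieve principle (the Theorem above) to each of $S$, $\Ssilly$, and $N$ after unfolding the substitutions \eqref{eq:Ssubst}, \eqref{eq:Ssillysubst}, and \eqref{eq:Nsubst}, and then to recognise the resulting sum over ``an object together with a chosen subset of distinguished positions'' as a sum over barred, hatted, or marked objects respectively.

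First I would treat $S$. After the substitution \eqref{eq:Ssubst}, the monomial attached to a permutation $\pi\in\calS_n$ in $\bS$ assigns the letter $\bu+\bv+\bt$ to each position in $P(\pi)$, the letter $\bv+\bt$ to each position in $Q(\pi)$, the letter $\bv+\bw+\bt$ to each position in $\adjAsc(\pi)$, and $\bt$ to every remaining position. Writing $\bu+\bv+\bt=(\bu+\bv)+\bt$ and $\bv+\bw+\bt=(\bv+\bw)+\bt$ and applying the sieve principle with $A_1=P(\pi)$, $A_2=Q(\pi)$, $A_3=\adjAsc(\pi)$ and $\bu_1=\bu+\bv$, $\bu_2=\bv$, $\bu_3=\bv+\bw$ expands this monomial into a sum over all $Y\subseteq A_1\cup A_2\cup A_3$. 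Since $P(\pi)$, $Q(\pi)$, $\adjAsc(\pi)$ are pairwise disjoint with union $\Asc(\pi)$, the index $Y$ runs over all subsets of $\Asc(\pi)$. But a pair $(\pi,Y)$ with $Y\subseteq\Asc(\pi)$ is precisely a barred permutation $\bpi\in\bcalS$ with underlying permutation $\pi$ and $\barred(\bpi)=Y$, and decorating $\pi$ with bars affects neither $n$, nor $\rmin$, nor the sets $P$, $Q$, $\adjAsc$; hence $Y\cap P(\pi)=\barred(\bpi)\cap P(\bpi)$, and likewise for $Q$ and $\adjAsc$. Reindexing the double sum over $\pi$ and $Y$ by $\bpi\in\bcalS$ yields the first identity.

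The arguments for $\Ssilly$ and $N$ are structurally identical. For $\Ssilly$ one uses two classes $A_1=\Psillyplus(\pi)$ and $A_2=\Qsillyplus(\pi)$ with $\bu_1=\bu+\bv$, $\bu_2=\bv$: since $\Psilly(\pi)$ and $\Qsilly(\pi)$ partition $\Ascsilly(\pi)$, the sets $\Psillyplus(\pi)$ and $\Qsillyplus(\pi)$ are disjoint and their union is exactly the collection of subsets realisable as $\hattedplus(\hpi)$ for a hatted permutation $\hpi$ over $\pi$; so the sieve index $Y$ ranges over all such $\hattedplus$-sets, and reindexing by $\hpi\in\hcalS$ gives the second identity. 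For $N$ one uses three classes $A_1=\Rne(M)$, $A_2=\Rcrproper(M)$, $A_3=\LRcr(M)$ with $\bu_1=\bu+\bv$, $\bu_2=\bv$, $\bu_3=\bv+\bw$; the only new observation required is that these three sets partition $\Radj(M)$, which is immediate from the definitions (for a right adjacency $\{M_{i-1},M_i\}$ the two openers are distinct, so the pair is either a right nesting or a right crossing, and $\Rcr(M)=\Rcrproper(M)\sqcup\LRcr(M)$ by definition). Hence the sieve index $Y$ ranges over all $Y\subseteq\Radj(M)$, i.e.\ over all markings of $M$, and since $\min$, $n$, $\Rne$, $\Rcrproper$, $\LRcr$ are all insensitive to the marking, reindexing by $\bM\in\bcalN$ gives the third identity.

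There is no real obstacle here: once the substitutions are unfolded, everything is bookkeeping around the sieve principle. The only points that genuinely need checking are the three partition statements $\Asc(\pi)=P(\pi)\sqcup Q(\pi)\sqcup\adjAsc(\pi)$ (already recorded when the patterns were introduced), $\Ascsilly(\pi)=\Psilly(\pi)\sqcup\Qsilly(\pi)$, and $\Radj(M)=\Rne(M)\sqcup\Rcrproper(M)\sqcup\LRcr(M)$. These are precisely what guarantees both that the sieve theorem applies (its input subsets must be disjoint) and that the subsets it produces are in bijection with the available barrings, hattings, or markings, with nothing omitted or double-counted.
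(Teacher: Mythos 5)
Your proposal is correct and takes essentially the same route as the paper's proof: unfold the substitutions, apply the sieve principle to the monomial attached to each object using the disjointness of the pattern/adjacency classes, and reindex the resulting pairs (object, chosen subset) as barred, hatted, or marked objects. The paper writes out only the first identity and declares the other two ``completely analogous,'' so your explicit checks that $\Ascsilly(\pi)=\Psilly(\pi)\sqcup\Qsilly(\pi)$ and $\Radj(M)=\Rne(M)\sqcup\Rcrproper(M)\sqcup\LRcr(M)$ simply make precise what the paper leaves implicit.
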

\begin{proof}
We will prove only the first identity since the proofs of the other two
are completely analogous.

By the sieve principle,
\begin{multline*}
\monomfour{(\bu+\bv+\bt)}{(\bv+\bt)}{(\bv+\bw+\bt)}{\bt}{n(\pi)}%
{P(\pi)}{Q(\pi)}{\adjAsc(\pi)} =  \\
\sum_{X \subseteq P(\pi)\cup Q(\pi)\cup \adjAsc(\pi)}
\monomfour{(\bu+\bv)}{\bv}{(\bv+\bw)}{\bt}{n(\pi)}{X\cap P(\pi)}%
{X\cap Q(\pi)}{X\cap \adjAsc(\pi)}
\end{multline*}
and thus, since $P(\pi)\cup Q(\pi)\cup \adjAsc(\pi)=\Asc(\pi)$, we have
\begin{align*}
S(\bu,\bv,\bw,\bt, s)
&= \sum_{\pi \in \calS}s^{\rmin(\pi)}
\sum_{X \subseteq \Asc(\pi)}
\monomfour{(\bu+\bv)}{\bv}{(\bv+\bw)}{\bt}{n(\pi)}%
{X\cap P(\pi)}{X\cap Q(\pi)}{X\cap \adjAsc(\pi)} \\
&=\sum_{\bpi \in \bcalS}s^{\rmin(\bpi)}
\monomfour{(\bu+\bv)}{\bv}{(\bv+\bw)}{\bt}{n(\bpi)}%
{\barred(\bpi)\cap P(\bpi)}{\barred(\bpi)\cap Q(\bpi)}%
{\barred(\bpi)\cap R(\bpi)}.
\end{align*}
\end{proof}

For $\mu\in\{\min,\max\}$,
we define the generating functions
\[
I^\mu(\bu,\bv,\bw,\bt, s)
= \sum_{T \in \flatcolstrictfillings}s^{\mu(T)}
\monomfour{(\bu+\bv)}{\bv}{(\bv+\bw)}{\bt}{n(T)}%
{X(T)\cap \Des(T)}{X(T)\cap \Asc(T)}%
{X(T)\cap \Rep(T)}.
\]
In the light of Proposition~\ref{pr:sieveSN}, Theorems~\ref{th:Sbij},
\ref{th:Ssillybij}, and~\ref{th:Nbij}
immediately yield the following theorem.
\begin{theo}\label{th:SIN}
The identities
\begin{align*}
S(\bu,\bv,\bw,\bt,s) &= I^{\max}(\bu,\bv,\bw,\bt,s) \\
\Ssilly(\bu,\bv,\bt,s) &= I^{\max}(\bu,\bv,0,\bt,s) \\
N(\bu,\bv,\bw,\bt,s) &= I^{\min}(\bu,\bv,\bw,\bt,s)
\end{align*}
hold.
\end{theo}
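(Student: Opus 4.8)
The plan is to obtain all three identities by transporting the sieved expressions of Proposition~\ref{pr:sieveSN} through the bijections $\phi$, $\phisilly$, and $f$, matching the noncommutative monomials factor by factor. The essential point is that the monomial $[\,\cdot\,;\bt]_n$ is determined by the actual \emph{subsets} of $[n]$ that occur as its exponents, not merely by their sizes, so the argument rests on the set-valued equalities in Theorems~\ref{th:Sbij}, \ref{th:Ssillybij}, and~\ref{th:Nbij}, not just on cardinalities.

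For $S(\bu,\bv,\bw,\bt,s)=I^{\max}(\bu,\bv,\bw,\bt,s)$ I would start from the formula for $S$ in Proposition~\ref{pr:sieveSN} and substitute $\bpi=\phi(T)$ with $T$ ranging over $\flatcolstrictfillings$. Equation~\eqref{eq:Sn} keeps the length equal to $n(T)$; equation~\eqref{eq:Smax} gives $\rmin(\bpi)=\max(T)$, so the power of $s$ is correct; and equations~\eqref{eq:SX}, \eqref{eq:SP}, \eqref{eq:SQ}, \eqref{eq:SR} identify the three exponent sets $\barred(\bpi)\cap P(\bpi)$, $\barred(\bpi)\cap Q(\bpi)$, $\barred(\bpi)\cap\adjAsc(\bpi)$ with $X(T)\cap\Des(T)$, $X(T)\cap\Asc(T)$, $X(T)\cap\Rep(T)$ respectively. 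Hence each summand becomes the corresponding summand in the definition of $I^{\max}$, and since $\phi$ is a bijection the two sums coincide. The identity $N(\bu,\bv,\bw,\bt,s)=I^{\min}(\bu,\bv,\bw,\bt,s)$ is proved identically, now using $f$ together with the parallel equalities~\eqref{eq:Nn}, \eqref{eq:Nmin}, \eqref{eq:NX}, \eqref{eq:NDes}, \eqref{eq:NAsc}, \eqref{eq:NRep}.

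For $\Ssilly(\bu,\bv,\bt,s)=I^{\max}(\bu,\bv,0,\bt,s)$ there is one extra step. Putting $\bw=0$ in $I^{\max}$ turns the letter $\bv+\bw$ into $\bv$, so positions in $X(T)\cap\Asc(T)$ and in $X(T)\cap\Rep(T)$ both receive $\bv$; since $\Des(T)$, $\Asc(T)$, and $\Rep(T)$ partition $\{2,\dots,n(T)\}$ (and hence partition $X(T)$), this simply amalgamates those two sets, giving $I^{\max}(\bu,\bv,0,\bt,s)=\sum_{T}s^{\max(T)}\monomthree{(\bu+\bv)}{\bv}{\bt}{n(T)}{X(T)\cap\Des(T)}{X(T)\cap(\Asc(T)\cup\Rep(T))}$. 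Substituting $\hpi=\phisilly(T)$ and invoking~\eqref{eq:Ssillyn}, \eqref{eq:Ssillymax}, \eqref{eq:SsillyX}, \eqref{eq:SsillyP}, and~\eqref{eq:SsillyQ} turns the right-hand side into exactly the sieved expression for $\Ssilly$ in Proposition~\ref{pr:sieveSN}, which finishes the proof.

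I do not anticipate a genuine obstacle here: all the ingredients are already available, and the work is essentially bookkeeping. The one place that warrants care is ensuring the bijection theorems are applied at the level of positions — subsets of $[n]$ — rather than cardinalities, so that the monomials themselves (and not just some specialization of them) agree; and, for $\Ssilly$ only, the small combinatorial remark that the $\Asc$ and $\Rep$ positions fuse once $\bw$ is set to $0$.
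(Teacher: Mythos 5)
Your proof is correct and follows essentially the same route as the paper, which obtains the theorem immediately by combining the sieved expressions of Proposition~\ref{pr:sieveSN} with the set-valued equalities of Theorems~\ref{th:Sbij}, \ref{th:Ssillybij}, and~\ref{th:Nbij}; you merely spell out the bookkeeping the paper leaves implicit. Your two points of care --- matching exponent \emph{sets} rather than cardinalities, and the fusion of the $\Asc$ and $\Rep$ positions when $\bw=0$ for the $\Ssilly$ identity --- are exactly the right ones.
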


What remains now is to show the equalities
$I^{\max}(\bu,\bv,\bw,\bt,1)=I^{\min}(\bu,\bv,\bw,\bt,1)$
and
$I^{\max}(\bu,\bv,0,\bt,s)=I^{\min}(\bu,\bv,0,\bt,s)$
and to derive expressions for these functions.

\begin{lemma}\label{lm:defF}
For $\mu\in\{\min,\max\}$, we have
\begin{equation}\label{eq:step}
I^{\mu}(\bu,\bv,\bw,\bt,s)
= \sum_{m=1}^\infty \prod_{k=1}^m
\bigl(\bt\bv^{-1}F^{\mu}_k(\bu,\bv,\bw,s)\bigr),
\end{equation}
where
\[
F^{\mu}_k(\bu,\bv,\bw,s)
=\sum_T
s^{\mu(T)}\monomthree{(\bu+\bv)}{(\bv+\bw)}{\bv}{\ell(T)}{\Des(T)}{\Rep(T)},
\]
the
sum taken over all column-strict fillings $T$ of rectangular shapes
of height $k$.
\end{lemma}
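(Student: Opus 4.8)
The plan is to exploit the rigid combinatorics of flat shapes. In a flat shape the row lengths are \emph{strictly} decreasing, which forces every height in $\{1,2,\dotsc,m\}$ to occur among the columns, where $m$ is the height of the tallest column (a ``gap'' at height $j$ would make rows $j$ and $j+1$ equally long). Hence a flat column-strict filling $T$ splits canonically into \emph{blocks} $B_1,\dotsc,B_m$, where $B_k$ is formed by the columns of $T$ of height exactly $k$; these columns are consecutive and $B_k$ is nonempty, and read on its own $B_k$ is a column-strict filling of a rectangle of height $k$. This block decomposition is a bijection between $\flatcolstrictfillings$ and the set of tuples $(B_1,\dotsc,B_m)$ with $m\ge1$ and $B_k$ a column-strict rectangular filling of height $k$; the columns appear left to right in the order $B_1,B_2,\dotsc,B_m$ since $\lambda(T)$ is weakly increasing.

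Next I would follow the relevant statistics through this decomposition. A column index $i$ lies in $X(T)$ iff $\lambda(T)_{i-1}=\lambda(T)_i$, i.e.\ iff $i$ is not the leftmost column of its block; so the $m$ indices outside $X(T)$ are exactly the leftmost columns of $B_1,\dotsc,B_m$. For $i\in X(T)$ the columns $i-1$ and $i$ lie in the same block $B_k$, so membership of $i$ in $\Des(T)$, $\Asc(T)$, or $\Rep(T)$ is decided entirely within $B_k$ and agrees with the corresponding classification of that column of $B_k$. Moreover, a column of $T$ in $B_k$ carries a dot on the bottom row (resp.\ in its topmost cell) iff the corresponding column of $B_k$ does, so $\min(T)=\sum_k\min(B_k)$ and $\max(T)=\sum_k\max(B_k)$; thus $s^{\mu(T)}=\prod_{k=1}^m s^{\mu(B_k)}$ for $\mu\in\{\min,\max\}$. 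Combining these facts, the monomial $\monomfour{(\bu+\bv)}{\bv}{(\bv+\bw)}{\bt}{n(T)}{X(T)\cap\Des(T)}{X(T)\cap\Asc(T)}{X(T)\cap\Rep(T)}$ factors, block by block, into the concatenation over $k=1,\dotsc,m$ of the letter $\bt$ (for the leading column of $B_k$) followed by $\monomthree{(\bu+\bv)}{(\bv+\bw)}{\bv}{\ell(B_k)}{\Des(B_k)}{\Rep(B_k)}$ with its first letter deleted. The first letter of that block word is always $\bv$, since position $1$ of a rectangle is neither a descent nor a repetition, so deleting it amounts to stripping a leading $\bv$, and block $B_k$ contributes $\bt\bv^{-1}\monomthree{(\bu+\bv)}{(\bv+\bw)}{\bv}{\ell(B_k)}{\Des(B_k)}{\Rep(B_k)}$.

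It then remains to sum. Summing $s^{\mu(B_k)}$ times the above block contribution over all column-strict rectangular height-$k$ fillings $B_k$ gives $\bt\bv^{-1}F^\mu_k(\bu,\bv,\bw,s)$, and summing the factored monomials over all tuples $(B_1,\dotsc,B_m)$ with $m\ge1$, using that multiplication in $\formal{R}{\bX}$ distributes over formally convergent sums, yields $\sum_{m=1}^\infty\prod_{k=1}^m\bigl(\bt\bv^{-1}F^\mu_k\bigr)$, which is \eqref{eq:step}. The bulk of the argument is bookkeeping: the block decomposition, the stability of the $\Des/\Asc/\Rep$ labels on $X(T)$, and the additivity of $\min$ and $\max$. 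The point that needs genuine care is the symbol $\bv^{-1}$: since $\bv$ is not invertible in $\formal{R}{\bX}$, one must observe that every monomial of $F^\mu_k$ begins with the letter $\bv$, so that there is a (unique) $G^\mu_k$ with $F^\mu_k=\bv G^\mu_k$; then $\bt\bv^{-1}F^\mu_k$ is by definition $\bt G^\mu_k$, a bona fide element of the ring, and the outer sum over $m$ converges because each factor $\bt\bv^{-1}F^\mu_k$ has minimal word-length $1$, so only finitely many values of $m$ contribute to the coefficient of any fixed word.
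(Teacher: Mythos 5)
Your proof is correct and follows essentially the same route as the paper: decompose a flat column-strict filling into its rectangular blocks of columns of equal height and track how $X(T)$, $\Des/\Asc/\Rep$, and $\min/\max$ distribute over the blocks. You are in fact somewhat more careful than the paper's own argument, notably in justifying that every height $1,\dotsc,m$ occurs (so the block decomposition is a genuine bijection), that every monomial of $F^{\mu}_k$ begins with $\bv$ so that $\bt\bv^{-1}F^{\mu}_k$ is well-defined, and that the sum over $m$ converges formally.
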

\begin{proof}
A flat shape can be seen as a finite sequence of rectangular \emph{blocks},
where all columns of height $k$ constitute the $k$th block.
An alternative way of describing a flat column-strict filling is to specify
the number $m$ of blocks and then for each $k\in[m]$ specify
%the width $h_k$ of the $k$th block and
the $k$th block, that is,
a column-strict filling of a rectangular shape of height $k$.
%rectangular shape $k^{h_k}$.
But this is exactly what the right-hand side of
\eqref{eq:step} does. Furthermore, in both the left-hand side
and the right-hand side
\begin{itemize}
\item
$\bt$ keeps track of the leftmost dot in each block,
\item
$(\bu+\bv)$ keeps track of the descents $\alpha_{i-1}>\alpha_i$
lying inside a block (i.e.~such that columns $i-1$ and
$i$ have the same height),
\item
$\bv$ keeps track of the ascents $\alpha_{i-1}<\alpha_i$
lying inside a block,
\item
$(\bv+\bw)$ keeps track of the repetitions $\alpha_{i-1}=\alpha_i$
lying inside a block, and finally
\item
$s$ counts the minima or maxima of $T$.
\end{itemize}
\end{proof}

Now, we are just one final step from success.
\begin{lemma}\label{lm:F}
%For $\mu\in\{\min,\max\}$,
The generating functions $F^{\mu}_k(\bu,\bv,\bw,s)$ as defined in
Lemma~\ref{lm:defF} are given by
\begin{align*}
F^{\max}_k(\bu, \bv, \bw, s)
&=\bigl(1-\bv\bu^{-1}(AB-1)\bigr)^{-1}-1, \\
F^{\min}_k(\bu, \bv, \bw, s)
&=\bigl(1-\bv\bu^{-1}(BA-1)\bigr)^{-1}-1,
\end{align*}
where
\begin{align*}
A = 1+s\bu(1-s\bw)^{-1}, \\
B = (1+\bu(1-\bw)^{-1})^{k-1}.
\end{align*}
In particular, we have
\begin{align*}
F^{\mu}_k(\bu, \bv, \bw, 1)
&= \bigl(1-\bv\bu^{-1}[ ( 1+\bu[1-\bw]^{-1})^k - 1]\bigr)^{-1}-1, \\
F^{\mu}_k(\bu, \bv, 0, s)
&= \bigl(1-\bv\bu^{-1}[ ( 1+\bu)^{k-1}(1 + s\bu) - 1]\bigr)^{-1}-1
\end{align*}
for $\mu\in\{\min,\max\}$.
\end{lemma}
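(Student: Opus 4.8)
My plan is to reduce $F^{\mu}_k$ to a word-counting problem and then solve a transfer-matrix recursion. First I would observe that a column-strict filling $T$ of a rectangular shape of height $k$ with $\ell(T)=\ell$ columns is nothing but a word $\alpha(T)=\alpha_1\cdots\alpha_\ell\in[k]^\ell$ with $\ell\ge 1$, and that under this identification $\Des(T),\Asc(T),\Rep(T)$ record the descents, ascents and repetitions of consecutive letters, while $\max(T)=\#\{i:\alpha_i=k\}$ and $\min(T)=\#\{i:\alpha_i=1\}$. Hence $F^{\mu}_k$ is a sum over all nonempty words in $[k]$ of the noncommutative product that carries a factor $\bv$ at position $1$ and at every ascent, $\bu+\bv$ at every descent, and $\bv+\bw$ at every repetition, weighted by a power of $s$ counting the occurrences of the letter $k$ (for $\mu=\max$) or of the letter $1$ (for $\mu=\min$).

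For $\mu=\max$ I would set, for $v\in[k]$, $P_v$ equal to the sum of the weights of all nonempty words ending in the letter $v$, so that $F^{\max}_k=\sum_v P_v$. Writing $Q_v:=P_1+\dots+P_v$ and $R:=Q_k=F^{\max}_k$, appending a letter to a word gives a one-step linear relation for the $P_v$; summing it over $u$ and simplifying turns it into the recursion
\[
Q_v(1+\bu-\bw)=Q_{v-1}(1-\bw)+\bv+R(\bu+\bv)\qquad (1\le v\le k-1),\quad Q_0=0,
\]
supplemented by the exceptional top equation $1+R=(1+Q_{k-1})(1+D)$, where $D:=s\bv(1-s\bv-s\bw)^{-1}$. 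Unrolling the recursion, the accumulated factors $(1-\bw)(1+\bu-\bw)^{-1}$ telescope; since $\bigl((1-\bw)(1+\bu-\bw)^{-1}\bigr)^{-1}=1+\bu(1-\bw)^{-1}$, their $(k-1)$st power is $B^{-1}$, and one obtains $Q_{k-1}=\bigl(\bv+R(\bu+\bv)\bigr)\bu^{-1}(1-B^{-1})$. Substituting this into the top equation and using $1+D=(1-s\bw)(1-s\bv-s\bw)^{-1}$ together with the identity $(1+D)^{-1}D=s\bv(1-s\bw)^{-1}=\bv\bu^{-1}(A-1)$, a short manipulation gives $(1+R)^{-1}=1-\bv\bu^{-1}(AB-1)$, which is the asserted formula for $F^{\max}_k$.

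For $\mu=\min$ the argument is identical in spirit; the only difference is that the power of $s$ now sits on the smallest letter, so the exceptional equation occurs at $v=1$ rather than $v=k$. Consequently the unrolling starts from $Q_1=P_1=\bigl(\bv+R'(\bu+\bv)\bigr)s(1+s\bu-s\bw)^{-1}$ (with $R':=F^{\min}_k$) and runs up to $v=k$, so the two geometric-type factors end up multiplied in the opposite order; carrying the computation through, with the companion identity $A=(1+s\bu-s\bw)(1-s\bw)^{-1}$, yields $(1+R')^{-1}=1-\bv\bu^{-1}(BA-1)$. Finally, the two displayed special cases are immediate: when $s=1$ both $A$ and $B$ become powers of $1+\bu(1-\bw)^{-1}$, and when $\bw=0$ both become polynomials in $\bu$; in either case $A$ and $B$ commute, so $AB=BA$ and the two formulas collapse to the single stated expression, with $AB=(1+\bu(1-\bw)^{-1})^k$ when $s=1$ and $AB=(1+\bu)^{k-1}(1+s\bu)$ when $\bw=0$.

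The step I expect to be most delicate is the noncommutative bookkeeping: keeping the left/right order of all factors straight while unrolling the recursion, recognizing the telescoping product of the factors $(1-\bw)(1+\bu-\bw)^{-1}$ as $B^{-1}$, and verifying the auxiliary identities $(1+D)^{-1}D=\bv\bu^{-1}(A-1)$ and $A=(1+s\bu-s\bw)(1-s\bw)^{-1}$. All of these reduce to the elementary manipulation $X(1-YX)^{-1}=(1-XY)^{-1}X$ and to the observation (available from Section~\ref{sc: NFPS}) that left division by $\bu$ is well defined on any series all of whose monomials begin with $\bu$, so no genuinely new difficulty arises.
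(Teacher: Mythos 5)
Your proposal is correct, but it takes a genuinely different route from the paper. You prove the formula by a transfer-matrix recursion on the last letter of the word $\alpha(T)\in[k]^{\ell}$: your one-step relation for the partial sums $Q_v$, the exceptional equation $1+R=(1+Q_{k-1})(1+D)$ with $D=s\bv(1-s\bv-s\bw)^{-1}$, the unrolled form $Q_{k-1}=\bigl(\bv+R(\bu+\bv)\bigr)\bu^{-1}(1-B^{-1})$, and the auxiliary identities $(1+D)^{-1}D=\bv\bu^{-1}(A-1)$ and $A=(1+s\bu-s\bw)(1-s\bw)^{-1}$ all check out, and carrying the algebra through does land on $(1+R)^{-1}=1-\bv\bu^{-1}(AB-1)$ (and on $1-\bv\bu^{-1}(BA-1)$ in the $\min$ case), with the special cases following as you say since $A$ and $B$ commute when $s=1$ or $\bw=0$. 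The paper instead argues combinatorially: it applies the sieve principle once more to expand the factors $(\bu+\bv)$ and $(\bv+\bw)$, splits the filling into blocks at the $\bv$-positions, observes that inside each block the dots form a weakly descending chain, and hence gets $F^{\mu}_k=(1-\bv\bu^{-1}G^{\mu}_k)^{-1}-1$ where $G^{\mu}_k$ enumerates weakly descending words; choosing the multiplicity of each value row by row gives $G^{\max}_k=AB-1$ and $G^{\min}_k=BA-1$ directly. The paper's route is shorter and makes the $AB$ versus $BA$ asymmetry transparent (the $s$-weighted extreme row sits at one end or the other of the descending run), while your recursion is more mechanical and self-contained but demands the careful noncommutative bookkeeping you flag; your closing remark that left division by $\bu$ is legitimate because every monomial of $1-B^{-1}$, $A-1$, and hence $AB-1$ begins with $\bu$ is exactly the right justification and matches the convention set up in the paper's Section on noncommutative series.
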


\begin{proof}
By the sieve principle we have
%\[
%\monomthree{(\bu+\bv)}{(\bv+\bw)}{\bv}{\ell(T)}{\Des(T)}{\Rep(T)}=
%\sum_{Y\subseteq\Des(T)\cup\Rep(T)}
%\monomthree{\bu}{\bw}{\bv}{\ell(T)}{Y\cap\Des(T)}{Y\cap\Rep(T)},
%\]
%which shows that
\begin{equation}\label{eq:Ftemp}
F^{\mu}_k(\bu,\bv,\bw,s)
%=\sum_{h=1}^\infty \sum_{T\in\colstrictfillings(k^h)}
=\sum_T s^{\mu(T)}\sum_{Y\subseteq\Des(T)\cup\Rep(T)}
\monomthree{\bu}{\bw}{\bv}{\ell(T)}{Y\cap\Des(T)}{Y\cap\Rep(T)}.
\end{equation}
This generating function chooses a
column-strict filling $T$ of a rectangular shape of height $k$,
and a set $Y\subseteq\Des(T)\cup\Rep(T)$.
Divide the shape $\lambda(T)$ into rectangular blocks of height $k$,
so that adjacent columns $i-1$ and $i$ belong to the same block if
and only if $i\in Y$. Since the dots inside each block form a weakly descending
chain (i.e.~each dot is below or at the same level as the dot in the column
immediately to the left),
we get
\[
F^{\mu}_k(\bu,\bv,\bw,s) = (1-\bv\bu^{-1}G^{\mu}_k(\bu,\bw,s))^{-1}-1,
\]
where
\[
G^{\mu}_k(\bu,\bw,s)=
\sum_T s^{\mu(T)}\monomtwo{\bw}{\bu}{\ell(T)}{\Rep(T)},
\]
the sum taken over column-strict fillings $T$ of rectangular shapes of
height $k$ such that $\Asc(T)=\emptyset$.
The generating function $G^{\mu}_k(\bu,\bw,s)$ only has to choose the
number of dots on each row, so $G^{\max}_k(\bu,\bw,s)=AB-1$ and
$G^{\min}_k(\bu,\bw,s)=BA-1$.
\end{proof}

Finally, we are ready to present our main theorem.
\begin{theo}\label{th:noncommutativemain}
The following equalities of generating functions hold:
\begin{multline*}
S(\bu,\bv,\bw,\bt,1)=N(\bu,\bv,\bw,\bt,1)=\\
\sum_{m=1}^\infty \prod_{k=1}^m
\bt\bv^{-1}
\bigl[\bigl(1-\bv\bu^{-1}[(1+\bu[1-\bw]^{-1})^k-1]\bigr)^{-1}-1\bigr]
\end{multline*}
and
\begin{multline*}
S(\bu,\bv,0,\bt,s)=\Ssilly(\bu,\bv,0,\bt,s)=N(\bu,\bv,0,\bt,s)=\\
\sum_{m=1}^\infty \prod_{k=1}^m
\bt\bv^{-1}
\bigl[\bigl(1-\bv\bu^{-1}[(1+\bu)^{k-1}(1+s\bu)-1]\bigr)^{-1}-1\bigr].
\end{multline*}
This implies, via~\eqref{eq:Ssubst}, \eqref{eq:Ssillysubst},
and \eqref{eq:Nsubst}, that
$\bS(\bx,\by,\bz,\bt,1)=\bN(\bx,\by,\bz,\bt,1)$ and
$\bS(\bx,\by,\by,\bt,s)=\bSsilly(\bx,\by,\by,\bt,s)=\bN(\bx,\by,\by,\bt,s)$.
\end{theo}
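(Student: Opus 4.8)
The plan is to chain together the three reductions already in place. By Theorem~\ref{th:SIN} we have $S(\bu,\bv,\bw,\bt,s)=I^{\max}(\bu,\bv,\bw,\bt,s)$, $N(\bu,\bv,\bw,\bt,s)=I^{\min}(\bu,\bv,\bw,\bt,s)$, and $\Ssilly(\bu,\bv,\bt,s)=I^{\max}(\bu,\bv,0,\bt,s)$, so everything reduces to a statement about $I^{\max}$ and $I^{\min}$. By Lemma~\ref{lm:defF} each $I^{\mu}$ is the block expansion $\sum_{m\ge1}\prod_{k=1}^m\bigl(\bt\bv^{-1}F^{\mu}_k(\bu,\bv,\bw,s)\bigr)$ of \eqref{eq:step}, with the block generating functions $F^{\mu}_k$ computed in Lemma~\ref{lm:F}. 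Hence the only thing left is to compare $F^{\max}_k$ with $F^{\min}_k$ in the two regimes $s=1$ and $\bw=0$, and then substitute the explicit value back into \eqref{eq:step}.

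For the comparison, recall that in Lemma~\ref{lm:F} the formulas for $F^{\max}_k$ and $F^{\min}_k$ are identical apart from the order of the product $AB$ versus $BA$, where $A=1+s\bu(1-s\bw)^{-1}$ and $B=(1+\bu(1-\bw)^{-1})^{k-1}$. Putting $s=1$ makes $A=1+\bu(1-\bw)^{-1}$ and $B=(1+\bu(1-\bw)^{-1})^{k-1}$ two powers of the single element $1+\bu(1-\bw)^{-1}$, hence commuting, so $AB=BA=(1+\bu(1-\bw)^{-1})^k$; putting $\bw=0$ makes $A=1+s\bu$ and $B=(1+\bu)^{k-1}$ polynomials in the single noncommutative variable $\bu$, hence again commuting, so $AB=BA=(1+\bu)^{k-1}(1+s\bu)$. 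In either regime $F^{\max}_k=F^{\min}_k$, equal to the value recorded in the ``in particular'' part of Lemma~\ref{lm:F}; consequently $I^{\max}(\bu,\bv,\bw,\bt,1)=I^{\min}(\bu,\bv,\bw,\bt,1)$ and $I^{\max}(\bu,\bv,0,\bt,s)=I^{\min}(\bu,\bv,0,\bt,s)$. Plugging the explicit $F^{\mu}_k$ into \eqref{eq:step} produces the two displayed product formulas, and together with Theorem~\ref{th:SIN} this yields $S(\bu,\bv,\bw,\bt,1)=N(\bu,\bv,\bw,\bt,1)$ and $S(\bu,\bv,0,\bt,s)=\Ssilly(\bu,\bv,\bt,s)=N(\bu,\bv,0,\bt,s)$.

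Finally, the assertions about $\bS$, $\bSsilly$, and $\bN$ follow by reversing the variable change \eqref{eq:Ssubst}--\eqref{eq:Nsubst}: applying $\bu\mapsto\bx-\by$, $\bv\mapsto\by-\bt$, $\bw\mapsto\bz-\by$ to $S(\bu,\bv,\bw,\bt,1)=N(\bu,\bv,\bw,\bt,1)$ gives $\bS(\bx,\by,\bz,\bt,1)=\bN(\bx,\by,\bz,\bt,1)$, and applying it with $s$ retained to $S(\bu,\bv,0,\bt,s)=\Ssilly(\bu,\bv,\bt,s)=N(\bu,\bv,0,\bt,s)$---noting that $\bw=0$ corresponds to $\bz=\by$---gives $\bS(\bx,\by,\by,\bt,s)=\bN(\bx,\by,\by,\bt,s)$, both equal to $\bSsilly(\bx,\by,\bt,s)$. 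There is no remaining obstacle of substance here; the real work sits in Lemmas~\ref{lm:defF} and~\ref{lm:F}. The one step I would take care to justify is the legitimacy of the specializations: one must check that $\bw\mapsto0$, $s\mapsto1$, and each linear substitution among the noncommutative variables extends to a continuous ring endomorphism of the series ring $\formal{\C[[s]]}{\bx,\by,\bz,\bt}$ of Section~\ref{sc: NFPS}, so that it commutes with the infinite sum $\sum_{m\ge1}$, the products $\prod_{k=1}^m$, and the formal inverses $(1-\dotsb)^{-1}$, $\bv^{-1}$, $\bu^{-1}$ appearing in \eqref{eq:step} and in Lemma~\ref{lm:F} (the last two read through the division convention at the end of Section~\ref{sc: NFPS}). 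In that framework this is routine---each substitution sends every noncommutative variable to an element without constant term, and each factor $\bt\bv^{-1}F^{\mu}_k$ has vanishing constant term, so every sum and product converges---but it is the only point where the noncommutative bookkeeping genuinely needs to be checked.
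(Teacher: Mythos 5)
Your proof is correct and takes essentially the same route as the paper, whose own proof is a one-line citation of Theorem~\ref{th:SIN} together with Lemmas~\ref{lm:defF} and~\ref{lm:F}. Your explicit explanation of why $AB=BA$ when $s=1$ or $\bw=0$ (both factors become series in a single element) is precisely the content of the ``in particular'' clause of Lemma~\ref{lm:F}, and your remarks on the legitimacy of the specializations only make explicit what the paper treats as routine.
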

\begin{proof}
The theorem follows from Theorem~\ref{th:SIN}
and Lemma~\ref{lm:defF} and~\ref{lm:F}.
\end{proof}

\section{Abelianization}\label{sc: abel}
\noindent
In this section we will replace the noncommutative variables in the
generating functions from the previous section by commutative variables
and relate the resulting expressions to known enumerative results.
At the end we will obtain a proof of Conjecture~21 in~\cite{ClaLin2011}.

From~\eqref{eq:Ssubst} it is clear that
\[
\bS(\bx\bt,\by\bt,\bz\bt,\bt,s)
=S((\bx-\by)\bt,(\by-1)\bt,(\bz-\by)\bt,\bt,s).
\]
Substituting commutative variables $x,y,z,t$ for $\bx,\by,\bz,\bt$ and using
Theorem~\ref{th:noncommutativemain} yields the following theorem.
\begin{theo}\label{th:main}
The following identities of generating functions hold:
\begin{multline*}
\sum_{n=1}^\infty t^n \sum_{\pi\in\calS_n}
x^{p(\pi)}y^{q(\pi)}z^{\adjasc(\pi)}
=\sum_{n=1}^\infty t^n \sum_{M\in\calN_n}
x^{\rne(M)}y^{\rcrproper(M)}z^{\lrcr(M)} \\
=\sum_{m=1}^\infty\prod_{k=1}^m \frac{1}{y-1}
\left[
\left(1-\frac{y-1}{x-y}%
\left[\left(1+\frac{(x-y)t}{1-(z-y)t}\right)^k-1\right]\right)^{-1}-1\right]
\end{multline*}
and
\begin{align}
&\quad\sum_{n=1}^\infty t^n \sum_{\pi\in\calS_n}
s^{\rmin(\pi)}x^{p(\pi)}y^{q(\pi)+\adjasc(\pi)} \\
& =\sum_{n=1}^\infty t^n \sum_{\pi\in\calS_n}
s^{\rmin(\pi)}x^{\psilly(\pi)}y^{\qsilly(\pi)} \\
& =\sum_{n=1}^\infty t^n \sum_{M\in\calN_n}
s^{\min(M)}x^{\rne(M)}y^{\rcr(M)} \label{eq:conjtwentyone} \\
& =\sum_{m=1}^\infty\prod_{k=1}^m \tfrac{1}{y-1}
\left[\left(1-\tfrac{y-1}{x-y}%
\left[(1+(x-y)t)^{k-1}(1+(x-y)st)-1\right]\right)^{-1}-1\right].
\end{align}
\end{theo}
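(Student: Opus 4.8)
The plan is to obtain everything by pushing the noncommutative identities of Theorem~\ref{th:noncommutativemain} through the abelianization described at the beginning of Section~\ref{sc: abel}. Concretely, I would work with the algebra homomorphism $\eps$ that sends $\bx\mapsto xt$, $\by\mapsto yt$, $\bz\mapsto zt$ and $\bt\mapsto t$, with values in (a suitable localization of) $\C[s][[x,y,z,t]]$; equivalently, via the substitutions \eqref{eq:Ssubst}, \eqref{eq:Nsubst} and \eqref{eq:Ssillysubst}, $\eps$ amounts to sending $\bu\mapsto(x-y)t$, $\bv\mapsto(y-1)t$, $\bw\mapsto(z-y)t$, $\bt\mapsto t$ in the functions $S,N,\Ssilly$. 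The first thing to record is that $\eps$ is a well-defined continuous ring homomorphism: a word of length $\ell$ is sent to a monomial of $t$-degree exactly $\ell$, so each monomial $x^ay^bz^ct^d$ receives contributions from only finitely many words. In particular $\eps$ commutes with sums, products and inverses of units.

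Next I would identify the images of the left-hand sides. For $\pi\in\calS_n$ the sets $P(\pi)$, $Q(\pi)$, $\adjAsc(\pi)$ are pairwise disjoint with union $\Asc(\pi)$, so $p(\pi)+q(\pi)+\adjasc(\pi)=\asc(\pi)$; hence in the monomial $[\bx^{P(\pi)}\by^{Q(\pi)}\bz^{\adjAsc(\pi)};\bt]_{n}$ the extra factors of $t$ created by $\bx\mapsto xt$, $\by\mapsto yt$, $\bz\mapsto zt$ combine with the $n-\asc(\pi)$ plain $\bt$'s to give exactly $t^{n}$, so $\eps$ of this monomial is $x^{p(\pi)}y^{q(\pi)}z^{\adjasc(\pi)}t^{n}$. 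Thus $\eps(\bS(\bx,\by,\bz,\bt,1))$ is the permutation sum in the first display. The same bookkeeping, now using that $\Rne(M),\Rcrproper(M),\LRcr(M)$ partition $\Radj(M)$ and that $\Psilly(\pi),\Qsilly(\pi)$ partition $\Ascsilly(\pi)$ (of size $\asc(\pi)$), shows that $\eps(\bN(\bx,\by,\bz,\bt,1))$ and $\eps(\bSsilly(\bx,\by,\bt,s))$ are the matching and permutation sums in the statement. Setting $\bz=\by$, i.e.\ $z=y$, merges $y^{q(\pi)}z^{\adjasc(\pi)}$ into $y^{q(\pi)+\adjasc(\pi)}$ and $y^{\rcrproper(M)}z^{\lrcr(M)}$ into $y^{\rcr(M)}$, producing the first two lines of the second display from $\eps(\bS(\bx,\by,\by,\bt,s))$ and $\eps(\bN(\bx,\by,\by,\bt,s))$.

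Then I would push the two closed forms of Theorem~\ref{th:noncommutativemain} through $\eps$. Under the substitution above one has $\bt\bv^{-1}\mapsto\tfrac1{y-1}$, $\bv\bu^{-1}\mapsto\tfrac{y-1}{x-y}$, $\bu(1-\bw)^{-1}\mapsto\tfrac{(x-y)t}{1-(z-y)t}$ and $\bu\mapsto(x-y)t$, so the first closed form becomes the first rational expression in Theorem~\ref{th:main}, and the second closed form (which already has $\bw=0$, matching $z=y$) becomes the second one. Combining this with the equalities $\bS(\bx,\by,\bz,\bt,1)=\bN(\bx,\by,\bz,\bt,1)$ and $\bS(\bx,\by,\by,\bt,s)=\bSsilly(\bx,\by,\bt,s)=\bN(\bx,\by,\by,\bt,s)$ from Theorem~\ref{th:noncommutativemain} and applying $\eps$ finishes the argument.

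I do not expect a genuinely hard step; the one point that needs care is the interaction of $\eps$ with the formal inverses $\bv^{-1}$, $\bu^{-1}$ and $(1-\bv\bu^{-1}[\cdots])^{-1}$ occurring in Theorem~\ref{th:noncommutativemain}. The factor $(1-\bv\bu^{-1}[\cdots])$ is a unit (constant term $1$), so $\eps$ handles its inverse automatically; for $\bv^{-1}$ and $\bu^{-1}$ one must note that the series they are applied to are left-divisible by $\bv$, respectively $\bu$ (this is already implicit in the derivations of Lemmas~\ref{lm:defF} and~\ref{lm:F}), so that their images are divisible by $(y-1)t$, respectively $(x-y)t$, in the power series ring and the displayed simplifications are honest identities rather than merely formal ones in a larger field. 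The remaining verifications --- the partition identities for the statistics above and the elementary reductions of $\bt\bv^{-1}$, $\bv\bu^{-1}$, $\bu(1-\bw)^{-1}$ --- are routine.
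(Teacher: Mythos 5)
Your proposal is correct and is essentially the paper's own argument: the paper proves Theorem~\ref{th:main} by noting $\bS(\bx\bt,\by\bt,\bz\bt,\bt,s)=S((\bx-\by)\bt,(\by-1)\bt,(\bz-\by)\bt,\bt,s)$ and then substituting commutative $x,y,z,t$ for the bold variables in Theorem~\ref{th:noncommutativemain}, which is exactly your homomorphism $\eps$. Your write-up merely makes explicit the bookkeeping (each word of length $n$ maps to $t^n$ times the commutative statistic monomial, and the divisibility issues behind $\bv^{-1}$, $\bu^{-1}$) that the paper leaves implicit.
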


If we set $y=1$ we obtain the following corollary.
\begin{coro}
The following identitites of generating functions hold:
\begin{multline*}
\sum_{n=1}^\infty t^n \sum_{\pi\in\calS_n}
x^{p(\pi)}z^{\adjasc(\pi)}
=\sum_{n=1}^\infty t^n \sum_{M\in\calN_n}
x^{\rne(M)}z^{\lrcr(M)} \\
=\sum_{m=1}^\infty\prod_{k=1}^m \frac{1}{x-1}
\left[\left(1+\frac{(x-1)t}{1-(z-1)t}\right)^k-1\right],
\end{multline*}
and
\begin{align*}
& \sum_{n=1}^\infty t^n \sum_{\pi\in\calS_n}
s^{\rmin(\pi)}x^{p(\pi)} \\
= & \sum_{n=1}^\infty t^n \sum_{\pi\in\calS_n}
s^{\rmin(\pi)}x^{\psilly(\pi)} \\
= & \sum_{n=1}^\infty t^n \sum_{M\in\calN_n}
s^{\min(M)}x^{\rne(M)} \\
= & \sum_{m=1}^\infty\prod_{k=1}^m \frac{1}{x-1}
\left[(1+(x-1)t)^{k-1}(1+(x-1)st)-1\right].
\end{align*}
\end{coro}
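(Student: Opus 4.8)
The plan is to derive the corollary from Theorem~\ref{th:main} simply by specializing $y=1$. The one point that requires attention is that $y-1$ occurs in the denominators on the right-hand sides, so $y=1$ is \emph{a priori} a singular value and the specialization must be read as the limit $y\to1$, taken coefficientwise in $t$.

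On the left-hand sides there is nothing to do: for each $n$ the sums $\sum_{\pi\in\calS_n}(\cdots)$ and $\sum_{M\in\calN_n}(\cdots)$ are finite, so the coefficient of $t^n$ on every left-hand side of Theorem~\ref{th:main} is an honest polynomial in $y$ (and in the remaining variables), and the substitution $y=1$ is immediate. Since $y^{q(\pi)}$, $y^{q(\pi)+\adjasc(\pi)}$, $y^{\qsilly(\pi)}$, $y^{\rcrproper(M)}$ and $y^{\rcr(M)}$ all become $1$, this reproduces exactly the left-hand sides listed in the corollary.

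For the right-hand sides I would use the elementary formal identity
\[
\frac{1}{y-1}\bigl[\bigl(1-(y-1)g\bigr)^{-1}-1\bigr]=g\,\bigl(1-(y-1)g\bigr)^{-1},
\]
valid whenever $g$ is a formal power series in $t$ with zero constant term. Applying it inside each factor of the product in the first identity of Theorem~\ref{th:main}, with $g=g_k(y):=\tfrac{1}{x-y}\bigl[\bigl(1+\tfrac{(x-y)t}{1-(z-y)t}\bigr)^{k}-1\bigr]$, and in the second identity with $g=g_k(y):=\tfrac{1}{x-y}\bigl[(1+(x-y)t)^{k-1}(1+(x-y)st)-1\bigr]$, rewrites the $k$th factor of the product as $g_k(y)\,(1-(y-1)g_k(y))^{-1}$, which is visibly regular at $y=1$ because $g_k(y)$ is (we work over a coefficient ring in which $x-1$ is invertible). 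Since $g_k(y)$ has $t$-valuation at least $1$, the $m$th term of $\sum_{m\ge1}\prod_{k=1}^m(\cdots)$ has $t$-valuation at least $m$, so for each power of $t$ only finitely many $m$ contribute; hence the limit $y\to1$ passes through both the sum and the product, and the $k$th factor tends to $g_k(1)$.

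Finally I would record that $g_k(1)=\tfrac{1}{x-1}\bigl[\bigl(1+\tfrac{(x-1)t}{1-(z-1)t}\bigr)^{k}-1\bigr]$ in the first case and $g_k(1)=\tfrac{1}{x-1}\bigl[(1+(x-1)t)^{k-1}(1+(x-1)st)-1\bigr]$ in the second, and substitute these back into $\sum_{m\ge1}\prod_{k=1}^m(\cdots)$; this yields precisely the two right-hand sides of the corollary. Combined with Theorem~\ref{th:main} (both of whose sides are regular at $y=1$, and hence agree there), the corollary follows. I do not expect a genuine obstacle here: the single delicate step is the removability of the apparent pole at $y=1$, and once each factor is written in the form $g_k(y)(1-(y-1)g_k(y))^{-1}$ this is transparent.
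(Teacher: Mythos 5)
Your proposal is correct and follows the same route as the paper, which obtains the corollary simply by setting $y=1$ in Theorem~\ref{th:main}. Your extra care in rewriting each factor as $g_k(y)\bigl(1-(y-1)g_k(y)\bigr)^{-1}$ to show the apparent pole at $y=1$ is removable is a sound (and slightly more careful) justification of the step the paper takes for granted.
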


Now, putting $z=1$ or $s=1$ in the equations of the last corollary yields
\[
\sum_{n=1}^\infty t^n \sum_{\pi\in\calS_n}
x^{p(\pi)}
=\sum_{n=1}^\infty t^n \sum_{M\in\calN_n}
x^{\rne(M)}
=\sum_{m=1}^\infty\prod_{k=1}^m \frac{1}{x-1}
\left[(1+(x-1)t)^k-1\right]
\]
and if we set $x=0$ into this expression we obtain Zagier's~\cite{zagier2001}
beautiful generating function for the Fishburn numbers:
\begin{multline*}
\sum_{n=1}^\infty t^n \#\{\pi\in\calS_n\colon p(\pi)=0\}
=\sum_{n=1}^\infty t^n \#\{M\in\calN_n\colon\rne(M)=0\} \\
=\sum_{m=1}^\infty\prod_{k=1}^m 
(1-(1-t)^k).
\end{multline*}

Conjecture~21 from \cite{ClaLin2011} can now be proved via the following
theorem.
\begin{theo}\label{th:conjtwentyone}
For any positive integer $n$ the following identity of generating functions
holds.
\[
\sum_{\pi\in\calS_n}s^{\rmin(\pi)}x^{\psilly(\pi)}w^{\des(\pi)}
=\sum_{M\in\calN_n}s^{\min(M)}x^{\rne(M)}w^{\inter(M)-1}
\]
\end{theo}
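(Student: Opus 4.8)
The plan is to obtain Theorem~\ref{th:conjtwentyone} as a purely formal consequence of Theorem~\ref{th:main}, trading the statistics $\des$ on permutations and $\inter$ on matchings for $\qsilly$ and $\rcr$, which are already controlled there. The engine is two elementary complementation identities: for $\pi\in\calS_n$,
\[
\des(\pi)=(n-1)-\psilly(\pi)-\qsilly(\pi),
\]
and for $M\in\calN_n$,
\[
\inter(M)-1=(n-1)-\rne(M)-\rcr(M).
\]

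The first identity is immediate from the setup already in place: $\Psilly(\pi)$ and $\Qsilly(\pi)$ partition $\Ascsilly(\pi)$, the number of silly ascents equals the number of ordinary ascents, and the ascents and descents of a permutation of $[n]$ together number $n-1$; hence $\psilly(\pi)+\qsilly(\pi)=\#\Asc(\pi)=(n-1)-\des(\pi)$. For the second identity the only point needing an argument is that $\radj(M)=\rne(M)+\rcr(M)$, i.e.\ that every right adjacency $\{M_{i-1},M_i\}$ is either a right nesting or a right crossing. This is a one-line case distinction: since $\closer(M_{i-1})$ and $\closer(M_i)$ are consecutive integers, $\opener(M_i)$ cannot lie between them, so depending on whether $\opener(M_i)<\opener(M_{i-1})$ or $\opener(M_{i-1})<\opener(M_i)$ the pair is a right nesting or a right crossing, respectively. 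Combining this with the definition of $\inter(M)$ from~\cite{ClaLin2011} gives $\inter(M)=n(M)-\radj(M)$ (each maximal run of $\ell$ consecutive closers contributes $\ell-1$ right adjacencies), which is the stated identity.

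With these in hand I would fix $n$, extract the coefficient of $t^n$ from the identity
\[
\sum_{n=1}^\infty t^n\sum_{\pi\in\calS_n}s^{\rmin(\pi)}x^{\psilly(\pi)}y^{\qsilly(\pi)}
=\sum_{n=1}^\infty t^n\sum_{M\in\calN_n}s^{\min(M)}x^{\rne(M)}y^{\rcr(M)}
\]
supplied by Theorem~\ref{th:main}, substitute $x\mapsto x/w$ and $y\mapsto 1/w$ (all exponents remain nonnegative for fixed $n$), and multiply through by $w^{n-1}$. The first complementation identity turns the left-hand side into $\sum_{\pi\in\calS_n}s^{\rmin(\pi)}x^{\psilly(\pi)}w^{\des(\pi)}$, and the second turns the right-hand side into $\sum_{M\in\calN_n}s^{\min(M)}x^{\rne(M)}w^{\inter(M)-1}$, which is exactly the claim. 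The whole argument is formal once Theorem~\ref{th:main} is available; the only spot warranting care is reconciling the definition of $\inter(M)$ in~\cite{ClaLin2011} with the formula $\inter(M)=n(M)-\radj(M)$, and if that reference phrases $\inter$ differently a short bookkeeping argument identifying the two descriptions would be inserted there. I do not anticipate any genuine difficulty beyond this.
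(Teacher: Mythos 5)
Your proposal is correct and follows essentially the same route as the paper: the paper's proof consists precisely of the two complementation identities $\des(\pi)=n-1-\bigl(\psilly(\pi)+\qsilly(\pi)\bigr)$ and $\inter(M)-1=n-1-\radj(M)=n-1-\bigl(\rne(M)+\rcr(M)\bigr)$, followed by invoking the identity~\eqref{eq:conjtwentyone} from Theorem~\ref{th:main}; your coefficient-of-$t^n$ extraction with the substitution $x\mapsto x/w$, $y\mapsto 1/w$ just makes that last deduction explicit.
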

\begin{proof}
Note that $\des(\pi)=n-1-\asc(\pi)=n-1-\bigl(\psilly(\pi)+\qsilly(\pi)\bigr)$
for any permutation $\pi\in\calS_n$ and that
$\inter(M)-1=n-1-\radj(M)=n-1-\bigl(\rne(M)+\rcr(M)\bigr)$
for any $M\in\calN_n$. Now the theorem follows from~\eqref{eq:conjtwentyone}
in Theorem~\ref{th:main}.
\end{proof}
Seeing that Conjecture~21 from \cite{ClaLin2011} follows from the theorem
only requires two simple observations:
\begin{itemize}
\item
The pattern $p$ used by Claesson and Linusson is our pattern
$\psilly=$\patternPsillyplain rotated 180 degrees. Hence our $\rmin(\pi)$
corresponds to their $\lmax(\pi)$.
\item
The part of their conjecture stating that the triple $(\rne, \min, \lev-1)$
of statistics on posets is equidistributed with the triple
$(\rne, \min, \inter-1)$ on matchings follows from their
Proposition~17 together with the fact that
their bijection $h$ preserves $\rne$.
\end{itemize}

\section{Left crossings and open problems}\label{sc: open}

\noindent
So far, we have only counted right adjacencies. Turning to the left
adjacencies, left nestings are forbidden and thus easily counted. We
are, however, able to account for the number of left crossings as
well. 

\begin{theo}\label{th:leftcrossing}

The following identities of generating functions hold:
\begin{multline*}
\sum_{n=0}^\infty t^n \sum_{\pi\in\calS_n}
x^{p(\pi)}y^{q(\pi)}z^{\adjasc(\pi)}\upsilon^{\ascproper(\pi)}
=\sum_{n=0}^\infty t^n \sum_{M\in\calN_n}
x^{\rne(M)}y^{\rcrproper(M)}z^{\lrcr(M)}\upsilon^{\lcrproper(M)} \\
=\sum_{m=0}^\infty\prod_{k=1}^m \frac{1}{\upsilon y-1}
\left[
\left(1-\frac{\upsilon y-1}{\upsilon (x-y)}%
\left[\left(1+\frac{(x-y)\upsilon t}{1-(z-y\upsilon)t}\right)^k-1\right]\right)^{-1}-1\right]
\end{multline*}
and
\begin{align*}
&\quad\sum_{n=0}^\infty t^n \sum_{\pi\in\calS_n}
s^{\rmin(\pi)}x^{p(\pi)}y^{q(\pi)+\adjasc(\pi)}\upsilon^{\asc(\pi)} \\
& =\sum_{n=0}^\infty t^n \sum_{\pi\in\calS_n}
s^{\rmin(\pi)}x^{\psilly(\pi)}y^{\qsilly(\pi)}\upsilon^{\asc(\pi)} \\
& =\sum_{n=0}^\infty t^n \sum_{M\in\calN_n}
s^{\min(M)}x^{\rne(M)}y^{\rcr(M)} \upsilon^{\lcr(M)} \\
& =\sum_{m=0}^\infty\prod_{k=1}^m \tfrac{1}{\upsilon y-1}
\left[\left(1-\tfrac{\upsilon y-1}{\upsilon (x-y)}%
\left[(1+(x-y)\upsilon t)^{k-1}(1+(x-y)\upsilon st)-1\right]\right)^{-1}-1\right].
\end{align*}

\end{theo}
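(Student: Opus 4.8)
The plan is to derive Theorem~\ref{th:leftcrossing} directly from Theorem~\ref{th:main} by a single variable substitution, after establishing one clean combinatorial identity about matchings without left nestings. The first observation to make precise is that each generating function displayed in Theorem~\ref{th:leftcrossing} is obtained from the corresponding one in Theorem~\ref{th:main} by substituting $\upsilon x$ for $x$ and $\upsilon y$ for $y$ (leaving $z,s,t$ alone), together with adjoining the $m=0$ term of the product, which is the empty product $1$ and matches the $n=0$ contribution now included in the combinatorial sums. This is a routine check on the closed forms: $\tfrac1{y-1}\mapsto\tfrac1{\upsilon y-1}$, $\tfrac{y-1}{x-y}\mapsto\tfrac{\upsilon y-1}{\upsilon(x-y)}$, $(x-y)t\mapsto(x-y)\upsilon t$, and $z-y\mapsto z-\upsilon y$, which is exactly the shape of the right-hand sides of Theorem~\ref{th:leftcrossing}.

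Next I would track what this substitution does to the two sides of each identity in Theorem~\ref{th:main}. On the permutation side it is immediate: since $\Asc(\pi)=P(\pi)\sqcup Q(\pi)\sqcup\adjAsc(\pi)$ we have $\ascproper(\pi)=p(\pi)+q(\pi)$, so $x\mapsto\upsilon x$, $y\mapsto\upsilon y$ turns $x^{p(\pi)}y^{q(\pi)}z^{\adjasc(\pi)}$ into $x^{p(\pi)}y^{q(\pi)}z^{\adjasc(\pi)}\upsilon^{\ascproper(\pi)}$, and likewise, since the number of silly ascents equals the number of ordinary ascents, $\psilly(\pi)+\qsilly(\pi)=\asc(\pi)$ gives the factor $\upsilon^{\asc(\pi)}$ in the second identity. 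On the matching side the same substitution produces $\upsilon^{\rne(M)+\rcrproper(M)}$ in the first identity and $\upsilon^{\rne(M)+\rcr(M)}$ in the second. So the theorem will follow once we prove that for every $M\in\calN_n$,
\[
\lcrproper(M)=\rne(M)+\rcrproper(M)\qquad\text{and}\qquad\lcr(M)=\rne(M)+\rcr(M).
\]
Because $i\in\LRcr(M)$ forces $i-1\in\Lcr(M)$ (directly from the definitions), we have $\Lcr(M)=\Lcrproper(M)\sqcup(\LRcr(M)-1)$ and $\Rcr(M)=\Rcrproper(M)\sqcup\LRcr(M)$, so these two identities are equivalent; it suffices to prove $\lcr(M)=\rne(M)+\rcr(M)$.

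For this key lemma I would argue in three short steps, valid for the stated classes of matchings. (i) For \emph{every} matching $M$, $\radj(M)=\rne(M)+\rcr(M)$: if $\closer(M_{i-1})=\closer(M_i)-1$ then no point lies strictly between those closers, hence $\opener(M_i)<\closer(M_{i-1})$, and comparing $\opener(M_{i-1})$ with $\opener(M_i)$ makes $\{M_{i-1},M_i\}$ either a nesting or a crossing, i.e.\ $i\in\Rne(M)\sqcup\Rcr(M)$. (ii) For every matching $M$, $\radj(M)=\ladj(M)$: reading the $2n$ points left to right, the opener/closer pattern is an alternating concatenation of maximal opener-runs and closer-runs, beginning with an opener-run and ending with a closer-run, so there are equally many opener-runs and closer-runs, say $r$; then $\radj(M)=n-r=\ladj(M)$ since a right (left) adjacency is precisely a pair of positions consecutive inside one closer-run (opener-run). (iii) If $M$ has no left nesting then $\ladj(M)=\lcr(M)$: a left-adjacent pair $\opener(A)+1=\opener(B)$ is a nesting or a crossing, and a left-adjacent nesting is a left nesting, which is excluded; so it must be a left crossing. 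Combining (i)--(iii) gives $\lcr(M)=\radj(M)=\rne(M)+\rcr(M)$ for $M\in\calN_n$. (Alternatively, one can note that under $\psi$ both $\radj(M)$ and $\lcr(M)$ equal the length of $\psi(M)$ minus its number of distinct column heights, which yields the same conclusion via the description of $f$ and $\psi$.)

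With the key lemma proved, the theorem is assembled: substitute $x\mapsto\upsilon x$, $y\mapsto\upsilon y$ into every equation of Theorem~\ref{th:main}, rewrite the matching-side exponents via $\rne+\rcrproper=\lcrproper$ and $\rne+\rcr=\lcr$ and the permutation-side exponents via $p+q=\ascproper$ and $\psilly+\qsilly=\asc$, and verify the $n=0$ (equivalently $m=0$) terms separately. I do not expect a genuine obstacle here; the only real step is recognizing that Theorem~\ref{th:main} already contains the result up to the substitution, and the work is concentrated in the key lemma, whose delicate point is the bookkeeping of the $\Rcrproper$/$\LRcr$ and $\Lcrproper$/$\LRcr$ refinements needed to pass between the $\lcr$ form and the $\lcrproper$ form of the first identity.
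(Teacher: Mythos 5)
Your proposal is correct and follows essentially the same route as the paper: the paper also reduces to Theorem~\ref{th:main} by observing that $\ladj(M)=\radj(M)$ (your run/switch count), hence $\lcr(M)=\rne(M)+\rcr(M)$ for left-nesting-free matchings, with the corresponding identities $\ascproper=p+q$ and $\asc=\psilly+\qsilly$ on the permutation side. You merely spell out what the paper leaves implicit, namely that this amounts to the substitution $x\mapsto\upsilon x$, $y\mapsto\upsilon y$ in Theorem~\ref{th:main} and the bookkeeping $\lcrproper=\rne+\rcrproper$ via $\LRcr$.
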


\begin{proof}
This theorem amounts only to noting that the number of left
adjacencies equals the number of right adjacencies in a matching,
since the number of switches from openers to closers is always one
more than the number of switches from closers to openers when we
traverse through a matching from left to right. Thus, with no left
nestings, the number of left crossings equals the number of right
crossings and nestings. Mapping long right adjacencies to \patternP
and \patternQ, the number of long left crossings must equal the
number of long ascents. Theorem \ref{th:main} then gives the first
generating function. For the second, the patterns \patternPsilly and
\patternQsilly refine silly ascents, which are equinumerous with
ordinary ascents.
\end{proof}

It would of course be interesting if we could also pinpoint the
positions of the left crossings. To this end, we need to adjust our
ascent definition somewhat. Let the \emph{ascent bottoms} be the set
$\Ascbottom(\pi) = \{\pi(j) : \pi(j) < \pi(j+1)\}$ and the
\emph{long ascent bottoms} be the set $\Ascbottomproper(\pi) =
\{\pi(j) : \pi(j) < \pi(j+1)-1\}$. 

Based on computations up to $n = 8$ and $n = 7$, respectively, we make
the following conjectures. 

\begin{conj}
The sets
\[
M(n, A, B, C) = \left\{ M \in \calN_n | \Lcr(M) = A, \Rne(M) =
B, \Rcr(M) = C \right\}
\]
and
\[
P(n, A, B, C) = \left\{ \pi \in \calS_n | \Ascbottom(\pi) = A,
\Psillyplus(\pi) = B, \Qsillyplus(\pi) = C \right\}
\]
are equinumerous for all $(n, A, B, C)$, where $n$ is a positive integer and
$A,B,C\subseteq[n]$.
\end{conj}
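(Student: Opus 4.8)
The plan is to promote the positional machinery of Sections~\ref{sc: NFPS}--\ref{sc: abel}, which already tracks the positions of right nestings and right crossings, so that it simultaneously records the positions of the left adjacencies; since a matching in $\calN_n$ has no left nestings, every left adjacency is a left crossing, so this will capture $\Lcr(M)$. The first step is to recast both sides as statements about fillings. On the matching side, composing $\psi$ with the unique strict filling $T''$ satisfying $\flatten(T'')=\psi(M)$ turns $M\in\calN_n$ into a strict filling of a length-$n$ shape in which adjacent equal-length columns are the right adjacencies --- descending dot pairs there the right nestings, ascending ones the right crossings --- and adjacent equal-length rows are the left adjacencies, every dot pair in such rows being ascending (row-positivity together with the absence of left nestings forces the dots in equal-length rows into an ascending chain) and hence a left crossing. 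Thus $M(n,A,B,C)$ is in bijection with strict fillings of length-$n$ shapes in which the descending dot pairs of adjacent equal-length columns are indexed by $B$, the ascending dot pairs of adjacent equal-length columns by $C$, and the dot pairs of adjacent equal-length rows by $A$, all indices read off from the border labelling as in Section~\ref{sc: conjecture 20}. Dually one wants a companion refinement of $\phisilly$ (Theorem~\ref{th:Ssillybij}) that, besides sending the silly ascents to $\Psillyplus$ and $\Qsillyplus$, transports the inter-row ascents of $T$ to $\Ascbottom(\pi)$; the natural candidate tracks the entries ending up immediately to the left of the blocks inserted as ``new'' during the construction of $\hpi$, and pinning this correspondence down in the face of later block insertions --- recall the destruction phenomenon illustrated in the examples after Theorem~\ref{th:Ssillybij} --- is part of the work.

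With both sides on the filling side, the second step is to enrich the block decomposition of Lemma~\ref{lm:defF}. A flat column-strict row-positive filling is assembled one maximal run of equal-length columns at a time; to see the left crossings one must additionally carry the structure of the runs of equal-length \emph{rows}, which is where the left-crossing positions sit in $T''$. The target is a refinement of~\eqref{eq:conjtwentyone} with one further bookkeeping variable whose exponents give $\Lcr(M)$ on the matching side and $\Ascbottom(\pi)$ on the permutation side, consistent with the cardinality identity already supplied by Theorem~\ref{th:leftcrossing}. One preliminary point to dispatch: the three families $\Lcr,\Rne,\Rcr$ (and likewise $\Ascbottom,\Psillyplus,\Qsillyplus$) need not be pairwise disjoint, so the monomial notation $[\bx^A\by^B\bz^C;\bt]_n$ of Section~\ref{sc: generating functions} must be widened to a scheme with enough letters to label each position of $[n]$ by an arbitrary subset of $\{\text{left crossing},\text{right nesting},\text{right crossing}\}$, with the sieve principle restated accordingly.

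\textbf{The main obstacle} is the final step: the analogue of the equalities $S(\bu,\bv,\bw,\bt,1)=N(\bu,\bv,\bw,\bt,1)$ and $S(\bu,\bv,0,\bt,s)=N(\bu,\bv,0,\bt,s)$ of Theorem~\ref{th:noncommutativemain}. Those held only because, in the relevant specialisations, the two factors $A$ and $B$ of Lemma~\ref{lm:F} reduce to commuting expressions, so the a priori distinct orderings $AB$ and $BA$ coincide. A left-crossing letter lives on the ``row side'' of the filling, transversally to the ``column side'' factors, and there is no evident reason it should commute with them; hence the matching-side and permutation-side series are, a priori, a genuinely noncommutative reordering of one another even though the counts in the conjecture agree, and the easy argument collapses. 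Closing the gap presumably requires either an explicit position-preserving involution on column-positive staircase (equivalently, enriched-permutation) fillings that exchanges the left-crossing data with the ascent-bottom data while fixing \emph{all} of the right-adjacency data --- and note that transposition, which underlies $\iota$ in Section~\ref{sc: conjecture 20}, is the wrong symmetry here, since it interchanges left and right adjacencies rather than leaving the right ones alone --- or else a fundamentally different, more combinatorial bijection $\calN_n\to\calS_n$. The numerical checks up to $n=8$ and $n=7$ leave little doubt that the statement holds, but constructing such an involution, and verifying it is positionally exact rather than merely cardinality-preserving, is where I expect essentially all of the difficulty to lie.
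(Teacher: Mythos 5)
The statement you are asked to prove is not proved in the paper at all: it is stated as a conjecture in Section~\ref{sc: open}, supported only by exhaustive computation (up to $n=8$) and by two weaker results --- Theorem~\ref{th:leftcrossing}, which gives the equidistribution at the level of \emph{numbers} $\lcr$, $\rne$, $\rcr$ versus $\asc$, $\psilly$, $\qsilly$, and the final theorem of that section, which tracks the \emph{positions} of left adjacencies but only after discarding the right-adjacency data entirely. So there is no paper proof to measure your proposal against, and, as you yourself say plainly, your proposal is not a proof either: it is a plan whose decisive step is left open.

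Your preliminary reductions are sound and essentially reproduce the paper's own partial evidence. Passing to the strict filling $T''$ with $\flatten(T'')=\psi(M)$ does place the left crossings on equal-length row pairs and the right nestings/crossings on equal-length column pairs, and the observation that $\Lcr$, $\Rne$, $\Rcr$ (and $\Ascbottom$, $\Psillyplus$, $\Qsillyplus$) are not pairwise disjoint, so the monomial scheme of Section~\ref{sc: generating functions} must be enlarged, is correct and necessary. But the genuine gap is exactly where you locate it: in Theorem~\ref{th:noncommutativemain} the equality of the permutation-side and matching-side series rests on Lemma~\ref{lm:F}, where the only discrepancy between the two sides is the order of the factors $A$ and $B$, and this discrepancy vanishes under the specializations used there. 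Once a letter recording left-crossing positions is introduced, it sits transversally to the column-block structure of Lemma~\ref{lm:defF}, the two series are genuinely different noncommutative words, and neither the $AB$ versus $BA$ trick nor the transposition-based involution $\iota$ of Section~\ref{sc: conjecture 20} (which swaps left and right adjacencies rather than fixing the right ones) applies. No substitute argument is supplied --- the needed position-exact involution or bijection $\calN_n\to\calS_n$ is only wished for --- so the proposal establishes nothing beyond what Theorem~\ref{th:leftcrossing} already gives, and the conjecture remains open, in your write-up just as in the paper.
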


\begin{conj}
The sets
\[
M(n, A, B, C, D) = \left\{ M \in \calN_n | \Lcrproper(M) = A,
\LRcr(M) = B , \Rne(M) = C, \Rcrproper(M) = D \right\}
\]
and
\[
P(n, A, B, C, D) = \left\{ \pi \in \calS_n | \Ascbottomproper(\pi) =
A, \adjAsc(\pi) = B(\pi), P(\pi) = C, Q(\pi) = D \right\}
\]
are equinumerous for all $(n, A, B, C, D)$, where $n$ is a positive
integer and $A,B,C,D\subseteq[n]$.
\end{conj}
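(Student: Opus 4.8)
The plan is to recognize first that three of the four coordinates are already under control. Setting $s=1$ in Theorem~\ref{th:noncommutativemain} gives $\bS(\bx,\by,\bz,\bt,1)=\bN(\bx,\by,\bz,\bt,1)$, and since these are \emph{noncommutative} series whose monomials record the exact positions of the tracked statistics, equality of coefficients is precisely the set-level assertion
\[
\#\{\pi\in\calS_n\colon P(\pi)=C,\ Q(\pi)=D,\ \adjAsc(\pi)=B\}=\#\{M\in\calN_n\colon \Rne(M)=C,\ \Rcrproper(M)=D,\ \LRcr(M)=B\}
\]
for all pairwise disjoint $B,C,D\subseteq[n]$. Thus the genuinely new content of the statement is the adjunction of the fourth coordinate, namely $\Ascbottomproper$ on the permutation side and $\Lcrproper$ on the matching side, to this already-established joint distribution. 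So the task reduces to upgrading the position-refined identity so that it carries one additional statistic.

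First I would enrich the generating functions. On each side I would introduce an additional family of noncommutative variables recording the positions in $[n]$ of the long ascent bottoms $\Ascbottomproper(\pi)$, respectively the long left crossings $\Lcrproper(M)$, alongside the monomials already used to build $\bS$ and $\bN$. The goal is then to prove the equality of the two enriched series, which by the same coefficient extraction as above would yield the conjecture.

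To evaluate the new variable at the level of fillings I would trace it through the bijections $f$ and $\phi$ of Theorems~\ref{th:Nbij} and~\ref{th:Sbij}, exactly as was done for $\Rne,\Rcrproper,\LRcr$ via~\eqref{eq:NDes}--\eqref{eq:NRep} and for $P,Q,\adjAsc$ via~\eqref{eq:SP}--\eqref{eq:SR}. Here the crucial structural observation is the row/column duality already exploited in Section~\ref{sc: conjecture 20}: under the matching-to-strict-filling correspondence $\psi$, right adjacencies are pairs of equal-length \emph{columns} while left adjacencies are pairs of equal-length \emph{rows}, and a left adjacency of a left-nesting-free matching is automatically a left crossing. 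Consequently $\Lcrproper(M)$ is the \emph{row}-indexed analogue of the column-indexed statistic $\Rcrproper$, and the natural device for transporting it is the transpose-type involution $\iota=g^{-1}\circ(\,\cdot\,)^{T}\circ g$, which swaps the roles of rows and columns. I would try to realize all four coordinates inside a single enriched filling model whose column-reading produces the permutation statistics and whose row-reading (via transposition) produces the left-crossing statistic, so that one generating-function identity captures the whole quadruple at once.

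The hard part will be reconciling the two indexings. The three right-hand statistics are indexed by a value/column index $i\in[n]$, whereas $\Lcrproper$ is indexed by openers and $\Ascbottomproper$ by ascent-bottom values; so even formulating the coefficient equality requires identifying these index sets, and then proving that $f$ and $\phi$ intertwine the column data with the row data \emph{compatibly}. Transposition is the obvious tool, but it does not preserve $\calN$: transposing a filling turns right nestings into left nestings, so it leaves the no-left-nesting class. I therefore expect that a direct appeal to $\iota$ will not suffice, and that one must instead construct a bijection, or a two-sided sieve, respecting the row and the column structure simultaneously while remaining inside $\calN_n$. This simultaneous control of both adjacency directions is, I believe, precisely the obstacle that keeps the statement a conjecture rather than a theorem.
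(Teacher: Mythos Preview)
The statement you are asked to prove is presented in the paper as a \emph{conjecture}, not a theorem: it is one of two conjectures in Section~\ref{sc: open}, supported only by computer verification up to $n=7$, and the paper offers no proof. So there is nothing to compare your argument against.

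That said, your analysis of the situation is accurate. Your observation that the three-coordinate version (dropping $A$) is already a consequence of Theorem~\ref{th:noncommutativemain} is correct: equality of the noncommutative series $\bS(\bx,\by,\bz,\bt,1)=\bN(\bx,\by,\bz,\bt,1)$ is exactly the statement that $(P,Q,\adjAsc)$ and $(\Rne,\Rcrproper,\LRcr)$ have the same joint set-valued distribution. Your proposed line of attack---enriching both series with a further noncommutative variable recording $\Ascbottomproper$ and $\Lcrproper$, then seeking a row/column symmetry in the filling model---is the natural one, and the obstruction you identify at the end (transposition sends right nestings to left nestings and therefore leaves $\calN$) is precisely why the paper leaves the statement open. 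The theorem immediately following the conjecture confirms that the paper can handle $\Lcrproper$ and $\LRcr$ jointly at the position level, but only after \emph{dropping} the right-adjacency variables; handling all four simultaneously is the missing step.

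One small correction: you say that $\Lcrproper$ is ``indexed by openers'' and that an index-reconciliation problem must be solved before the conjecture even makes sense. In fact all four statistics on each side are already subsets of $[n]$ (arc indices ordered by closer on the matching side, values on the permutation side), so the statement is well-posed as written. The genuine difficulty is purely the one you name in your final paragraph.
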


Letting go of right adjacencies, we are able to compute the noncommutative
generating function for left adjacencies, which constitutes further evidence
for the conjectures above.

\begin{theo}
The following equalities of generating functions hold.
%\[
%\begin{split}
%& \sum_{n=0}^\infty \sum_{\pi\in\calS_n}
%\monomtwo{\bupsilon}{\bt}{n}{\Ascbottom(\pi)} \\
%&= \sum_{n=0}^\infty \sum_{M\in\calN_n}
%\monomtwo{\bupsilon}{\bt}{n}{\Lcr(M)} \\
%&= \sum_{m=0}^\infty \prod_{k=1}^m (m + 1 - k)
%\bigl[ 1 - (m + 1 - k) (\bupsilon - \bt) \bigr]^{-1} \bt
%\end{split}
%\]
%and 
\[
\begin{split}
& \sum_{n=1}^\infty \sum_{\pi\in\calS_n}
\monomthree{\bupsilon}{\bz}{\bt}{n}{\Ascbottomproper(\pi)}{\adjAsc(\pi)
- 1}
\\ 
&= \sum_{n=1}^\infty \sum_{M\in\calN_n}
\monomthree{\bupsilon}{\bz}{\bt}{n}{\Lcrproper(M)}{\LRcr(M) - 1} \\
&= \sum_{m=1}^\infty \prod_{k=1}^m (m + 1 - k) \Bigl[ 1 - (m + 1 - k)
  \bigl(1 - (\bz - \bupsilon)\bigr)^{-1} (\bupsilon - \bt) \Bigr]^{-1}
  \bigl(1 - (\bz - \bupsilon)\bigr)^{-1} \bt.
\end{split}
\]

\end{theo}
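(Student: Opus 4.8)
The plan is to run the machinery of Sections~\ref{sc: conjecture 20}--\ref{sc: generating functions} with the roles of rows and columns interchanged, so that \emph{left} adjacencies of matchings take over the part played there by right adjacencies. No inclusion--exclusion is needed: one sums directly over $\calN_n$ and $\calS_n$, and in the monomial $[\bupsilon^{A}\bz^{B};\bt]_n$ the letter $\bt$ simply records the positions that are \emph{not} left crossings (respectively not ascent bottoms), while $A\sqcup B=\Lcr(M)$ (respectively $\Ascbottom(\pi)$), split into its ``long'' and ``short'' types just as in the proof of Theorem~\ref{th:leftcrossing}.

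For the matching side, recall that under $\psi$ a matching $M\in\calN$ becomes a flat column-strict row-positive filling whose columns carry the closer-order of the arcs, whose rows carry the opener-order, and in which the dots on each maximal group of equal-length rows ascend, since left nestings are forbidden. Transposing this filling (equivalently, applying $\steepen$ to the transpose of the underlying strict filling) produces a steep row-strict column-positive filling in which a left adjacency of $M$ appears as two dots in the same column, a left crossing being one whose two rows have equal length and a double crossing one whose two rows are moreover consecutive. First I would prove the transposed analogue of Theorem~\ref{th:Nbij}: a bijection between these steep fillings and $\calN$ under which $\Lcrproper(M)$ and $\LRcr(M)-1$ become the filling statistics obtained from \eqref{eq:NDes}--\eqref{eq:NRep} by transposing $\Des$ and $\Rep$. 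The shift by $-1$ is exactly the bookkeeping forced by the convention that a left adjacency is indexed by its \emph{left} arc (the one with the smaller opener, hence smaller closer-order index) rather than by its right arc.

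For the permutation side I would, as in the proof of Theorem~\ref{th:leftcrossing}, use that the number of left adjacencies equals the number of right adjacencies, but now positionally: passing through the complement--reverse map $\pi\mapsto\pi^{\mathrm{rc}}$ turns $\Ascbottomproper(\pi)$ into the value-complement of the occurrence set of \patternlascent in $\pi^{\mathrm{rc}}$ and $\adjAsc(\pi)-1$ into the value-complement of the occurrence set of \patternR, which trades ascent bottoms for ascent tops and puts us back in the setting of $\phi$ and the dictionary \eqref{eq:SP}--\eqref{eq:SR} (long ascents $=$ \patternP$\,\sqcup\,$\patternQ, short ascents $=$ \patternR). Taking the value-complement reverses each length-$n$ monomial, which is an anti-automorphism of the power-series ring and must be tracked in parallel on the matching side (where the same reversal is produced by indexing left crossings at their left arc). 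After this, both sums are rewritten as one and the same generating function over steep row-strict column-positive fillings.

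It then remains to evaluate that generating function. As in Lemma~\ref{lm:defF}, a steep shape decomposes canonically into rectangular blocks: here the $k$-th \emph{row}-block, counted from the bottom, consists of all rows of a given length, and because a steep shape with $m$ columns has rows of every length $1,\dots,m$ this block is a rectangle of width $m+1-k$ of some positive height. Summing over $m$, then over the blocks, the width $m+1-k$ contributes the displayed multiplier, while the sum over the block's height together with the choice of a column for each of its dots contributes $\bigl[1-(m+1-k)(1-(\bz-\bupsilon))^{-1}(\bupsilon-\bt)\bigr]^{-1}(1-(\bz-\bupsilon))^{-1}\bt$, the transposed counterpart of the block factor $\bt\bv^{-1}F^{\mu}_k$ of Lemmas~\ref{lm:defF} and~\ref{lm:F}. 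The step I expect to cost the most work is the transposed statistic dictionary of the second paragraph: one must verify, cell by cell, that a proper left crossing of $M$ lands where a long ascent bottom of the associated permutation lands and that a double crossing---recorded at its left arc---lands where a short ascent bottom lands, with the two $-1$ index shifts and the monomial reversal lining up on both sides. Everything else is a routine transposition of the arguments already carried out above, together with the geometric-series bookkeeping familiar from Lemma~\ref{lm:F}.
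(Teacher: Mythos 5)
There is a real gap, and it sits exactly where you have declared the work to be routine. Your premise that ``no inclusion--exclusion is needed'' does not survive your own permutation-side reduction: after reverse--complementing you need the ascent-top statistic refined into its long/short (indeed $P$/$Q$/$\adjAsc$) types for \emph{all} ascents, and you propose to read this off the flat filling through the dictionary \eqref{eq:SP}--\eqref{eq:SR}. But those identities hold only after intersecting with the barred set $X(T)=\barred(\bpi)$; the type of an ascent is not stable under the insertion map $\phi$ (a later, larger entry inserted between $v$ and $v+1$ turns a short ascent into a long one), so it is not a function of the unmarked filling, and the permutation sum cannot be ``rewritten as the same generating function over steep fillings'' without running the sieve. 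The same hidden sieve resurfaces in your final step: the block factor you claim, written in $\bupsilon-\bt$ and $\bz-\bupsilon$, is precisely the substituted form of Lemmas~\ref{lm:defF}--\ref{lm:F}, and a direct block-by-block evaluation does not factor anyway, because whether a dot of the steep filling receives $\bt$ or $\bupsilon$ depends on whether some dot lies above it in its column, possibly in a much higher row-block, and column-positivity also couples the blocks. (A smaller but genuine error: your transposed dictionary is wrong as stated --- in $\calN$ \emph{every} left adjacency is a left crossing; ``equal-length and consecutive rows'' is what singles out the double crossings $\LRcr$, not the left crossings.)

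For comparison, the paper does not route this theorem through fillings at all for the first equality: it builds a new recursive bijection $\calN_n\to\calS_n$ (peel off the arc with the rightmost closer and insert $n$ according to the neighbour of its opener), under which left crossings go positionally to ascent bottoms, proper ones to long ascent bottoms and double crossings to short ascents; no sieve is needed there precisely because ascent bottoms and left crossings can never be destroyed by later insertions, whereas the typed ascent tops you want to use can. The closed form is then obtained for free: $180^\circ$ rotation turns ascent bottoms into ascent tops, the ascent-top series is $\bS(\bupsilon,\bupsilon,\bz,\bt,1)=S(0,\bupsilon-\bt,\bz-\bupsilon,\bt,1)$, already computed in Theorem~\ref{th:noncommutativemain}, and reversing every monomial (an anti-automorphism of the series ring, which re-indexes $k\mapsto m+1-k$ and transposes each factor) yields exactly the displayed product. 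To salvage your route you would have to either produce such a position-preserving bijection yourself, or reinstate the marked/sieve machinery on the permutation side and then derive the block factors as in Section~\ref{sc: generating functions}; as written, the two ``routine'' steps are the missing proof.
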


\begin{proof}
%We will only prove the first part of the theorem. The second part
%follows easily using the same approach.
To establish the connection
between permutations and matchings, there is a fairly simple
bijection $f$ from matchings without left nestings
to permutations which maps left
crossings to ascents. We define the bijection recursively. 

Assume that $M$ is a matching of size $n$ without left nestings
and let $M'$ be the matching obtained by
removing arc $M_n$, i.e.~the arc with the rightmost closer.
We then define $\pi = f(M)$ from $\pi' = f(M')$ as
follows. Assume that $\opener(M_n) = \closer(M_j) - 1$:
\begin{itemize}
\item If $j = n$, insert $n$ to the far left of $\pi'$;
\item If $\opener(M_n) - 1$ is a closer, insert $n$ to the immediate
  left of $j$ in $\pi'$;
\item If $\opener(M_n) - 1 = \opener(M_k)$, insert $n$ to the
  immediate right of $k$ in $\pi'$.
\end{itemize}

It is not obvious that this yields a bijection, since the left of $j$
and the right of $k$ might coincide. However, it is not hard to see
that in the second case, $n$ is inserted to the left of an ascent top,
breaking an ascent. In the third case, $n$ is inserted to the right of
a descent top or to the far right, hence not breaking an 
ascent. Thus, the map is injective. It is also clear that left 
crossings are mapped to ascent bottoms, since the left crossing $k$ is
introduced in the third case, where $k$ becomes the ascent bottom of
$n$. It is also easy to check that double crossings are mapped to short
ascents.

Neither left crossings nor ascent bottoms can be destroyed by
adding more arcs or higher elements. 

Turning to the generating function, we note that ascent bottoms in a
permutation turn into ascent tops in the same permutation rotated 180
degrees. Ascent tops are counted by the patterns $P(\pi)$, $Q(\pi)$
and $\adjAsc(\pi)$. Thus, by Theorem~\ref{th:noncommutativemain} we have
%\[
%\begin{split}
%& \sum_{n=0}^\infty \sum_{\pi\in\calS_n}
%\monomtwo{\bupsilon}{\bt}{n}{\Asc(\pi)} \\
%&= \bS(\bupsilon, \bupsilon, \bupsilon, \bt, 1) \\
%&= S(0, \bupsilon - \bt, 0, \bt, 1) \\
%&= \sum_{m = 0}^\infty \prod_{k = 1}^m \bt (\bupsilon - \bt)^{-1}
%\Bigl( \bigl[1 - k (\bupsilon - \bt) \bigr]^{-1} - 1 \Bigr) \\
%&= \sum_{m = 0}^\infty \prod_{k = 1}^m k \bt \bigl[1 - k (\bupsilon -
%  \bt) \bigr]^{-1}.
%\end{split}
%\]
\[
\begin{split}
& \sum_{n=1}^\infty \sum_{\pi\in\calS_n}
\monomthree{\bupsilon}{\bz}{\bt}{n}{\Ascproper(\pi)}{\adjAsc(\pi)} \\
&= \bS(\bupsilon, \bupsilon, \bz, \bt, 1) \\
&= S(0, \bupsilon - \bt, \bz-\bupsilon, \bt, 1) \\
&= \sum_{m = 1}^\infty \prod_{k = 1}^m \bt (\bupsilon - \bt)^{-1}
\Bigl( \bigl[1 - k (\bupsilon - \bt) (1-(\bz-\bupsilon))^{-1} \bigr]^{-1} - 1
\Bigr) \\
&= \sum_{m = 1}^\infty \prod_{k = 1}^m k \bt \bigl(1-(\bz-\bupsilon)\bigr)^{-1}
\bigl[1 - k (\bupsilon -
  \bt) \bigl(1-(\bz-\bupsilon)\bigr)^{-1} \bigr]^{-1}.
\end{split}
\]
Since rotating the permutation turns ascent top $k$ into ascent bottom
$n + 1 - k$, the generating function of ascent tops should be the
vertical reflection of the generating function for ascent
bottoms.
%Hence, the first part of the theorem follows. The second part
%is completely analogous, noting that the bijection $f$ maps double
%crossings to adjacencies. 

%Turning to the generating formula, we note that the coefficient of
%$\monomtwo{(\bupsilon - \bt)}{\bt}{n}{\ba}$ is given by
%\[
%\prod_{k = 1}^n \left( n + 1 - k - \#(\ba \cap [k, n]) \right).
%\]
%But this is the number of permutations $\pi \in \calS_n$ with
%$\Ascbottom(\pi) \subseteq \ba$, since the number of elements greater
%than $i$ is $ n - k + 1 - \#(\ba \cap [k, n])$. Applying the sieve
%principle gives the theorem.
\end{proof}

To conclude the picture, we would like to allow left nestings. Is
there a nice bijection from all matchings of size $n$ to, for
instance, ternary trees? What kind of patterns would be the
counterparts of \patternP and \patternQ? What are the generating
functions? 

Another possibly fruitful path of generalisation could be to study
the distribution of left and right nestings and crossings in
\emph{partitions}.
Recent work of Chen et al.~\cite{CheFanZha2011} and of
Yan and Xu~\cite{YanXu2011} shows some progress in this direction.

\section{Acknowledgement}
\noindent
The second author was supported
by a grant from the Swedish Research Council (621-2009-6090).

\bibliographystyle{abbrv}
\bibliography{matchings}

\end{document}